


\documentclass[12pt]{amsart}

\usepackage{amssymb}
\usepackage[normalem]{ulem}
\usepackage[colorlinks=true]{hyperref}
\usepackage{mathrsfs}
\usepackage[all]{xy} 
\usepackage{setspace}

\usepackage{enumitem}

\usepackage{algorithmic}
\usepackage[ruled]{algorithm}
\usepackage{caption}

\setlength{\textwidth}{15cm}
\setlength{\textheight}{22cm}
\setlength{\oddsidemargin}{5mm}
\setlength{\evensidemargin}{0mm}
\setlength{\topmargin}{-1cm}

\newcommand{\supp}{\mathrm{supp}}
\newcommand{\PP}[1]{\mathcal P_{#1}}
\newcommand{\RR}{\mathcal T_{\tilde I,\tilde J}}
\newcommand{\mult}{\mathcal{X}}
\newcommand{\Ht}{\mathrm{Ht}}
\newcommand{\NN}{\mathcal{N}}
\newcommand{\Nf}{\mathrm{Nf}}
\newcommand{\kk}{\mathbb{K}}
\newcommand{\T}{\mathbb{T}}
\newcommand{\QR}{S}
\newcommand{\MFFunctor}[1]{\underline{\mathbf{Mf}}_{#1}}
\newcommand{\MFScheme}[1]{\mathbf{Mf}_{#1}}
\newcommand{\HFunctor}[2]{\underline{\mathbf{Hilb}}_{#1}^{#2}}
\newcommand{\HScheme}[2]{\mathbf{Hilb}_{#1}^{#2}}
\newcommand{\Proj}{\mathrm{Proj}}
\newcommand{\Spec}{\mathrm{Spec}}
\newcommand{\rid}[1]{\longrightarrow_{#1}}
\newcommand{\crid}[1]{\longrightarrow_{#1}^+}
\newcommand{\PGL}[1]{\mathrm{PGL}({#1})}

\newtheorem{theorem}{Theorem}[section]
\newtheorem{corollary}[theorem]{Corollary}
\newtheorem{proposition}[theorem]{Proposition}
\newtheorem{lemma}[theorem]{Lemma}

\theoremstyle{definition}
\newtheorem{definition}[theorem]{Definition}

\newtheorem{example}[theorem]{Example}
\theoremstyle{remark}
\newtheorem{remark}[theorem]{Remark}
\numberwithin{equation}{section}

\onehalfspacing

\begin{document}

\title[Hilbert schemes over quotient rings via relative marked bases]{Open Covers and Lex Points of Hilbert schemes over quotient rings via relative marked bases
}

\author[C.~Bertone]{Cristina Bertone}
\address{Dipartimento di Matematica \lq\lq G.~Peano\rq\rq\\ Universit\`a degli Studi di Torino\\ 
         Via Carlo Alberto 10\\ 10123 Torino\\ Italy.}
\email{\href{mailto:cristina.bertone@unito.it}{cristina.bertone@unito.it}}
\urladdr{\url{https://sites.google.com/view/cristinabertone/}}

\author[F.~Cioffi]{Francesca Cioffi}
\address{Dip. di Matematica e Appl. \\ Universit\`a degli Studi di Napoli Federico II\\ Via Cintia \\ 80126 Napoli \\ Italy.}
\email{\href{mailto:cioffifr@unina.it}{cioffifr@unina.it}}

\author[M.~Orth]{Matthias Orth}
\address{Institute of Mathematics\\ University of Kassel\\ 
         34109 Kassel, Germany.}
\email{\href{mailto:morth@mathematik.uni-kassel.de}{morth@mathematik.uni-kassel.de}}

\author[W.~Seiler]{Werner Seiler}
\address{Institute of Mathematics\\ University of Kassel\\ 
         34109 Kassel, Germany.}
\email{\href{mailto:seiler@mathematik.uni-kassel.de}{seiler@mathematik.uni-kassel.de}}
\urladdr{\url{http://www.mathematik.uni-kassel.de/~seiler/}}

\subjclass[2020]{13P10, 14C05, 14M05, 13H10, 13F55}
\keywords{Marked basis, Hilbert scheme, Cohen-Macaulay ring, Macaulay-Lex ring, open cover, lex-point}

\begin{abstract}
We introduce the notion of a {\em relative marked basis} over quasi-stable ideals, together with constructive methods and a functorial interpretation, developing computational methods for the study of Hilbert schemes over quotients of polynomial rings. Then we focus on two applications.  

The first has a theoretical flavour and produces an explicit open cover of the Hilbert scheme when the quotient ring is Cohen-Macaulay on quasi-stable ideals. Together with relative marked bases, we use suitable {\em general} changes of variables which preserve the structure of the quasi-stable ideal, against the expectations.

The second application has a computational flavour. When the quotient rings are Macaulay-Lex on quasi-stable ideals, we investigate the lex-point of the Hilbert schemes and find examples of both smooth and singular lex-points. 
\end{abstract}

\maketitle

\section*{Introduction}

Let $R=\kk[x_0,\dots,x_n]$ be the polynomial ring over a field $\kk$ in $n+1$ variables, endowed with the order  $x_0<\dots<x_n$, and  $I$ be an ideal of $R$. 

We provide a way to analyse Hilbert schemes over a quotient ring $R/I$ using a computer algebra system. The tools that we develop are based on the theory of marked bases over quasi-stable ideals, together with their properties and functorial features \cite{CMR2015,BCRAffine,Quot}. However, the different setting that is considered in this paper presents new problems to solve. 

We apply our tools to achieve two different tasks under the hypothesis that $I$ is a monomial quasi-stable ideal. The first one concerns the study of a suitable open cover of such a Hilbert scheme when $R/I$ is Cohen-Macaulay. The second one regards the study of lex-points when $R/I$ is a Macaulay-Lex ring, i.e.~a ring in which an analogue of Macaulay's Theorem characterizing the Hilbert functions of homogeneous ideals in a polynomial ring holds. 

We start giving a first non-obvious insight in the application of marked bases to quotient polynomial rings (Section \ref{sec:MFandHilb}). Then we push forward our investigation and look for results analogous to those for relative Gr\"obner bases and relative involutive bases of ideals in quotient rings that have been recently developed in \cite{HOS}. Hence, we define {\em relative marked bases}, which turn out to be suitable to work in $R/I$, in particular when $I$ is a quasi-stable ideal, and study their functorial interpretation (Sections \ref{sec:relative marked bases} and \ref{sec:relative marked functor}). 

An immediate consequence of this study is that we can construct some open subschemes of the Hilbert scheme over $R/I$ by means of relative marked bases, too (see Proposition \ref{prop:sottoschema} and Theorem \ref{th:open subfunctor}).

When the field $\kk$ is infinite and the quotient ring  on a quasi-stable ideal is Cohen-Macaulay, we develop this feature to describe an open cover of Hilbert schemes over such a quotient ring, which we obtain thanks to suitable changes of variables  applied on open subsets which parameterise relative marked bases (see Theorem~\ref{thm:ric}). This result is achieved 
generalising the method which is described in \cite{Quot} and which is based on deterministically computable suitable linear changes of variables. 

The novelty of this approach consists in the fact that we show that there are computable general linear changes of variables of the quotient ring that by definition preserve the complete structure of the ideal on which the quotient is performed, instead of destroying it, as it could be expected (e.g., see \cite[Introduction]{MP}). Hence, this result is not obvious and, together with the underlying idea, is new in the context of the present paper.

Even when $\kk$ is not infinite or the quotient ring is not Cohen-Macaulay, the availability of  open subschemes described by means of the relative marked bases  encourages the study of local properties.  For example, when the quotient ring is Macaulay-Lex
, the explicit computation of relative marked schemes can be useful in the investigation of the properties of the lex-point of Hilbert schemes over such quotient rings. 

It is indeed very well-known that every non-empty Hilbert scheme over a polynomial ring on a field has a unique point, called the {\em lex-point}, that is defined by a lex-ideal, and which is smooth (see \cite{ReSti}) and  characterized by the property that its defining saturated lex-ideal has the minimal possible Hilbert function among the points of the same Hilbert scheme. 

It is even true that every non-empty Hilbert scheme over a Macaulay-Lex ring on a quasi-stable ideal has the lex-point, which moreover has the minimal Hilbert function (Theorem \ref{th:lex-point}). However, in Section \ref{sec:lex-point} we give classes of examples both of smooth and singular lex-points in Macaulay-Lex rings over quasi-stable ideals. 

The problem of the smoothness of the lex-point  is 
 studied also
 in other Hilbert schemes, see for instance \cite{RS-2022}. In fact, 
  it is not possible to extend the proof for the smoothness of the lex-point of the Hilbert scheme $\HScheme{\mathbb P^n}{p(z)}$ given in \cite{ReSti} to other cases because the Zariski tangent space is not the same (see \cite[Section 1.7]{GMP} and \cite[Proposition~2.1]{PSFlips}).

To the best of our knowledge, analogous examples are not yet available in the literature. Moreover, the benefits obtained by the use of relative marked bases are evident when we count the number of parameters involved in the computations (see Remark~\ref{rem:number parameters}), as we highlight throughout the descriptions of some of our examples.

\section{Preliminaries}
\label{sec:preliminaries}

Let $\kk$ be a field and $A$ any Noetherian $\kk$-algebra with $1_A=1_{\kk}$. Take the polynomial ring $R=\kk[x_0,\dots,x_n]$ endowed with the order $x_0<\cdots<x_n$, and $R_A:=R\otimes_{A}A=A[x_0,\dots,x_n]$, so that $R=R_{\kk}$. A \emph{term} is a power product $x^\alpha= x_0^ {\alpha_0}\cdots x_n^{\alpha_n}$. 
We denote by $\T$ the set of terms. For every $x^\alpha \in \T$, we denote by $\min(x^\alpha)$ the smallest variable dividing $x^\alpha$ and by $\mult(x^\alpha)$ the set of the variables smaller than or equal to $\min (x^\alpha)$ which is called the set of {\em multiplicative variables} of $x^\alpha$. If $N$ is a finite set of polynomials, we denote by $\langle N\rangle_A$ the $A$-module generated by $N$, and by $(N)$ the ideal generated by $N$ in $R_A$. 

We use the standard grading on $R_A$, that is $\deg(x_j)=1$ for all $j\in \{0,\dots,n\}$ and $\deg(a)=0$ for all $a\in A$. Hence we have $\deg(x^\alpha)=\vert \alpha\vert=\sum \alpha_i$.
We assume that the polynomials, the ideals and $A$-modules involved in our definitions, statements and arguments are {\em homogeneous} with respect to this standard grading on $R_A$. 

For an ideal $I$, we denote by $I_t$ the vector space of the homogeneous polynomial of $I$ of a given degree $t$ and set $I_{\geq t}:=\bigoplus_{s\geq t} I_s$.

When we write an equality of the kind $I=B_1\oplus B_2$, where $I$ is an ideal and $B_1, B_2$ are $A$-modules or ideals, we also mean $I_s=(B_1)_s\oplus (B_2)_s$ for every $s\geq 0$. In such situations, we will say that the equality is \emph{graded}.

An ideal $\tilde I$ is \emph{monomial} if it is generated by a set of terms. A monomial ideal $\tilde I$ has a unique minimal set of generators consisting of terms and we call it the \emph{monomial basis} of $\tilde I$, denoted by $\mathcal B_{\tilde I}$. We define $\NN(\tilde I)\subseteq \T$ as the set of terms in $\T$ not belonging to~$\tilde I$.  For every polynomial $f\in R_A$, $\supp(f)$ is the set of terms appearing in $f$ with a non-zero coefficient.
For every polynomial $f \in R_{A}$, an {\em $x$-coefficient} of $f$ is the coefficient in $A$ of a term in $\T\cap \supp(f)$. 

\begin{definition}\label{def:qsIdeal}
For every $x^\alpha $  in $\T$, we define the \emph{Pommaret cone} of $x^\alpha$ as
\[
 \mathcal C_{\mathcal P}(x^\alpha):=\{x^\delta x^\alpha \mid \delta_i=0, \, \forall x_i\notin \mult(x^\alpha) \}\subset \T.
\]
A finite set $U$ of terms generating an ideal $\tilde I$ is called a \emph{Pommaret basis} of $\tilde I$ if
\begin{equation}\label{eq:decPJ}
\tilde I\cap \T=\bigsqcup_{x^\alpha \in U}\mathcal C_{\mathcal P}(x^\alpha).
\end{equation}
A \emph{quasi-stable} ideal is a monomial ideal having a Pommaret basis. If $\tilde I$ is a quasi-stable ideal, we denote by $\PP{\tilde I}$ its Pommaret basis.
\end{definition}

\begin{proposition}[{\cite[Theorem 9.2]{Seiler2009II}, \cite[Theorem 5.5.15]{Seiler:InvolutionBook}}]
    If $\tilde I$ is a quasi-stable ideal, then $\mathcal B_{\tilde I}\subseteq \PP{\tilde I}$ holds and the regularity $\mathrm{reg}(\tilde I)$ coincides with the maximum degree of a term in $\PP{\tilde I}$.
\end{proposition}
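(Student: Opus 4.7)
My plan is to prove the two assertions separately.

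For the inclusion $\mathcal B_{\tilde I}\subseteq\PP{\tilde I}$, the argument is immediate from the disjointness in \eqref{eq:decPJ}. Given $x^\alpha\in\mathcal B_{\tilde I}$, there is a unique $x^\beta\in\PP{\tilde I}$ with $x^\alpha\in\mathcal C_{\mathcal P}(x^\beta)$, so $x^\alpha=x^\delta x^\beta$ with $\supp(x^\delta)\subseteq\mult(x^\beta)$. Since $x^\beta\in\tilde I$ divides $x^\alpha$, the minimality of $x^\alpha$ as a generator forces $x^\delta=1$, and thus $x^\alpha=x^\beta\in\PP{\tilde I}$.

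For the regularity equality, set $q:=\max\{|\alpha|:x^\alpha\in\PP{\tilde I}\}$. The upper bound $\mathrm{reg}(\tilde I)\leq q$ is the easier half: I would build a Schreyer-type free resolution of $\tilde I$ from the Pommaret basis, in which the $k$-th syzygy module is generated in degrees $|\alpha|+k$ for various $x^\alpha\in\PP{\tilde I}$. The involutivity of $\PP{\tilde I}$ ensures, for each $x^\alpha\in\PP{\tilde I}$ and each non-multiplicative variable $x_j\notin\mult(x^\alpha)$, a unique reduction $x_j x^\alpha=x^{\delta'}x^{\alpha'}$ with $x^{\alpha'}\in\PP{\tilde I}$ and $\supp(x^{\delta'})\subseteq\mult(x^{\alpha'})$, providing the material from which the syzygies are constructed. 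Since all shifts in this (not necessarily minimal) resolution are bounded by the corresponding $|\alpha|+k\leq q+k$, the bound $\mathrm{reg}(\tilde I)\leq q$ follows.

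The lower bound $\mathrm{reg}(\tilde I)\geq q$ is the genuine obstacle, since the Schreyer resolution is generally non-minimal and naive Betti-number counting on it only provides upper bounds. My plan is to identify a non-cancellable top-degree Betti number $\beta_{k,k+q}(\tilde I)$ by a combinatorial analysis: fixing $x^\alpha\in\PP{\tilde I}$ of degree $q$ and arguing that the Koszul-type syzygy on the full set of its non-multiplicative variables cannot be cancelled against contributions from other Pommaret generators, since any such cancellation would either produce terms of degree exceeding $q$---contradicting the maximality of $q$---or would collapse $x^\alpha$ into the cone of another Pommaret generator, contradicting its membership in $\PP{\tilde I}$. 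Making this non-cancellation rigorous is the technical heart of Seiler's argument and is the main obstacle of the proof.
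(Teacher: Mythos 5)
The paper itself offers no proof of this proposition; it is quoted as an external result with citations to Seiler's work, so there is nothing internal to compare against. Evaluating your attempt on its own merits: the inclusion $\mathcal B_{\tilde I}\subseteq\PP{\tilde I}$ is handled correctly by the disjoint-cone argument, and the upper bound $\mathrm{reg}(\tilde I)\leq q$ via the involutive Schreyer resolution is the right idea and essentially what Seiler does.

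The genuine gap is in the lower bound, and you partly acknowledge it yourself. Your cancellation argument rules out cancellation ``downward'' (a generator of $F_k$ in shift $q+k$ cannot map with a unit coefficient to $F_{k-1}$, since all shifts there are $\leq q+k-1$), but it does not rule out cancellation ``upward'': a Pommaret generator $x^\beta$ of degree $q-1$ with $\geq k+1$ non-multiplicative variables contributes a generator of $F_{k+1}$ with shift exactly $q+k$, and if $x^\alpha=x_j x^\beta$ for a non-multiplicative $x_j$ of $x^\beta$ (which is perfectly compatible with disjointness of the cones of $x^\alpha$ and $x^\beta$), the monomial differential entry is a unit and the generator of $F_k$ you want to survive is killed. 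So ``contradicting membership in $\PP{\tilde I}$'' is not actually a contradiction, and the claimed non-cancellation does not go through as sketched. Seiler's actual proof of the lower bound takes a different route: rather than a direct Betti-number non-cancellation, it exploits the recursive structure of the Pommaret basis with respect to $x_0$-saturation, together with the classical identity $\mathrm{reg}(\tilde I)=\max\{\mathrm{reg}(\tilde I^{sat}),\,\mathrm{sat}(\tilde I)\}$ coming from the finite-length quotient $\tilde I^{sat}/\tilde I$; one then checks that the Pommaret-basis degree obeys the same recursion, reducing inductively to fewer variables. If you want to complete the argument you should follow that saturation-based induction rather than trying to certify a non-vanishing top-degree Betti number in the (non-minimal) Schreyer resolution.
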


\section{Marked bases and marked functors} 
\label{sec:MFandHilb}

\subsection{Marked bases} A \emph{marked polynomial} is a polynomial $f\in R_A$ together with a fixed term $x^\alpha\in\supp(f)$ whose coefficient is equal to $1_A$ (see \cite{RS}).
This term is called \emph{head term} of $f$ and denoted by $\Ht(f)$.

From now, let $\tilde I$ denote a quasi-stable ideal.

\begin{definition}\cite[Definition 5.1]{CMR2015}\label{def:MarkedSet}
A \emph{$\PP{\tilde I}$-marked set} is a finite set $F \subset R_A$ of exactly $\vert \PP{\tilde I}\vert$ marked homogeneous polynomials $f_\alpha$ with pairwise distinct head terms $\Ht(f_\alpha) = x^\alpha \in \PP{\tilde I}$ and $\supp(f_\alpha - x^\alpha) \subset \langle \NN (\tilde I) \rangle_A$. A $\PP{\tilde I}$-marked set $F$ is a \emph{$\PP{\tilde I}$-marked basis} of the ideal $(F)$ if the graded decomposition $\left(R_A\right) = (F) \oplus \langle \mathcal {N}(\tilde I) \rangle_A$ holds.
\end{definition}

\begin{definition}\cite[Definition 5.3]{CMR2015}\label{relazione omogenea}
Given a $\PP{\tilde I}$-marked set $F=\{f_\alpha\}_{x^\alpha\in\PP{\tilde I}}$, the set $F^{\ast}:=\{x^\eta f_\alpha  \ \vert \ x^\eta x^\alpha  \in \mathcal C_{\mathcal P}(x^\alpha) \}\subseteq (F)$ is made of homogeneous polynomials which are marked on the terms of $\tilde I$ in the natural way  $\Ht(x^\eta f_\alpha )=x^\eta \Ht(f_\alpha)$. 

We denote by $\rid{F^{\ast}}$ the reflexive and transitive closure of the following reduction relation on $R_A$: $f$ is in relation with $f'$ if $f'=f-\lambda x^\eta f_\alpha$, where $x^\eta f_\alpha\in F^{\ast}$ and $\lambda\neq 0_A$ is the coefficient of the term $x^{\eta+\alpha}$ in $f$.

We will write $f \crid{F^{\ast}} f_0$ if $f\in R_A$, $f \rid{F^{\ast}} f_0$ and $f_0\in \langle \NN(\tilde I)\rangle_A$. In this case we say that \lq\lq $f$ is reduced to $f_0$ by $F^\ast$\rq\rq, and that \lq\lq $f_0$ is reduced with respect to $F^\ast$\rq\rq.
\end{definition}

It is noteworthy that the reduction relation $\rid{F^\ast}$ is Noetherian and confluent (see \cite[Theorem 5.9 and and Corollary 5.11]{CMR}). 

\begin{lemma}\label{lem:subred}
Let $E$ be any subset of a $\PP{\tilde I}$-marked set $F$. 
Then, letting  $E^\ast=\{x^\eta p_\beta  \ \vert \ x^\eta x^\beta  \in \mathcal C_{\mathcal P}(x^\beta), p_\beta \in E \}$, the subreduction relation $\rid{E^\ast}$ is Noetherian and confluent.
\end{lemma}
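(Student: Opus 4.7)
The plan is to treat Noetherianity and confluence separately, in each case reducing the statement to the corresponding property of $\rid{F^\ast}$ recalled just above the lemma.

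Noetherianity is immediate: $E\subseteq F$ gives $E^\ast\subseteq F^\ast$ and hence $\rid{E^\ast}\subseteq\rid{F^\ast}$ as relations on $R_A$, so any infinite $\rid{E^\ast}$-chain would be an infinite $\rid{F^\ast}$-chain, contradicting the Noetherianity of the latter.

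For confluence, I use that in a Noetherian rewriting system confluence is equivalent to uniqueness of normal forms; indeed, given two reducts $g_1,g_2$ of $f$, each reduces by Noetherianity to some normal form, both of these are in particular normal forms of $f$, and uniqueness forces them to coincide and serve as a common reduct. Set $\mathcal{C}(E):=\bigsqcup_{p_\beta\in E}\mathcal C_{\mathcal P}(x^\beta)$, so that the $\rid{E^\ast}$-normal forms are precisely the polynomials whose support avoids $\mathcal{C}(E)$. If $g$ and $g'$ are two normal forms of $f\in R_A$, then $\supp(g),\supp(g')\subseteq \T\setminus\mathcal{C}(E)$ and $g-g'$ belongs to the $A$-module $\langle E^\ast\rangle_A$ generated by $E^\ast$. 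Uniqueness therefore amounts to proving $\langle E^\ast\rangle_A\cap \langle \T\setminus\mathcal{C}(E)\rangle_A=0$.

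To this end, write $g-g'=\sum_i\mu_i\,x^{\eta_i}p_{\beta_i}$ with pairwise distinct pairs $(\eta_i,\beta_i)$, $p_{\beta_i}\in E$, $x^{\eta_i+\beta_i}\in\mathcal C_{\mathcal P}(x^{\beta_i})$ and $\mu_i\in A\setminus\{0\}$, and suppose for a contradiction that the sum is non-empty. Let $\prec$ denote the well-founded order on $\T$ underlying the Noetherianity of $\rid{F^\ast}$, for which the head term $x^{\eta+\beta}$ of every $x^\eta p_\beta\in F^\ast$ strictly dominates every other term of its support. Pick $x^\gamma$ to be the $\prec$-maximum of $\{x^{\eta_i+\beta_i}\}_i$. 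The partition of $\tilde I\cap\T$ into Pommaret cones yields a unique index $i^*$ with $x^{\eta_{i^*}+\beta_{i^*}}=x^\gamma$; for every other $i$ the head $x^{\eta_i+\beta_i}$ differs from $x^\gamma$, while each non-head term of $x^{\eta_i}p_{\beta_i}$ is $\prec x^{\eta_i+\beta_i}\preceq x^\gamma$ and therefore unequal to $x^\gamma$. Consequently the coefficient of $x^\gamma$ in $g-g'$ equals $\mu_{i^*}\neq 0$, contradicting $x^\gamma\in\mathcal{C}(E)$ together with $\supp(g-g')\subseteq \T\setminus\mathcal{C}(E)$. Hence $g=g'$.

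The main obstacle is this uniqueness step: it depends on the order-theoretic property that the head strictly dominates the tail in every element of $F^\ast$, which is a core feature of the marked-basis machinery from the cited references and is imported here rather than redeveloped.
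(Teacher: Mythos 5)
Your Noetherianity argument coincides with the paper's: a subreduction of a Noetherian relation is Noetherian. For confluence the paper simply cites a black-box criterion from \cite[Remark~7.2]{CMR}: a marked reduction relation that is Noetherian and has pairwise disjoint cones is confluent. You instead try to re-derive this by proving uniqueness of normal forms, and your argument hinges on the existence of a well-founded order $\prec$ on $\T$ under which, in every element $x^\eta p_\beta\in F^\ast$, the head term $x^{\eta+\beta}$ strictly dominates all other terms of the support. You flag this as an import from the marked-basis machinery, but it is not actually a standard ingredient of that machinery. The whole point of marked bases, as opposed to Gr\"obner bases, is that the head term is \emph{chosen} and need not be maximal with respect to any term order; for instance a marked polynomial $x_1^2 - x_2 x_0$ has its head $x_1^2$ below its tail in the lexicographic order. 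The Noetherianity of $\rid{F^\ast}$ proved in \cite[Theorem~5.9]{CMR} is established by a combinatorial argument on the Pommaret cone structure, not by exhibiting a head-dominating well-founded order on $\T$, and Noetherianity of the reduction (which acts on polynomials, where coefficient cancellations can happen) does not on its own imply acyclicity of the term-level ``head $\to$ tail'' dependency relation that would be needed for such an order to exist. So the uniqueness-of-normal-forms step is a genuine gap: the maximal-head argument has no justified $\prec$ to run on. The paper avoids this entirely by citing the meta-criterion; if you want a self-contained argument, you would need to actually extract and prove the ordering or cone-refinement lemma from \cite{CMR} rather than assume it.
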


\begin{proof} 
The reduction relation $\rid{E^\ast}$ is obviously defined as a subreduction of $\rid{F^\ast}$, like it is suggested in \cite[Definition 3.4]{CMR}.
Then, the reduction relation $\rid{E^\ast}$ is Noetherian because it is a subreduction of $\rid{F^\ast}$, which is Noetherian. Moreover, $\rid{E^\ast}$ is confluent  because it is Noetherian and has disjoint cones (see also \cite[Remark~7.2]{CMR}).
\end{proof}

\begin{remark}\label{rem:subred}
Observe that, in the hypotheses of Lemma \ref{lem:subred}, we write $p\crid{E^\ast}h$ when $\supp(h)$ is included in $\T\setminus\left(\cup_{x^\beta \in \{\Ht(p)\vert p\in E\}} \mathcal C_{\mathcal P}(x^\beta)\right)$, according to Definition \ref{relazione omogenea}.
\end{remark}

\subsection{Marked functors} It is possible to parameterise the set of ideals $I$ having a $\PP{\tilde I}$-marked basis by means of a functor from the category of Noetherian $\kk$-Algebras to that of Sets, which turns out to be represented by an affine scheme.
We briefly recall the definition of this functor and the construction of this affine scheme.
 
The {\em marked functor} from the category of Noetherian $\kk$-algebras to the category of sets
\[
\MFFunctor{\tilde I}: \underline{\text{Noeth}\ \kk\!\!-\!\!\text{Alg}} \longrightarrow \underline{\text{Sets}}
\]
associates to any Noetherian $\kk$-algebra $A$ the set
$$\MFFunctor{\tilde I}(A):=\{ (G) \subset R_A\mid G\text{ is a } \PP{\tilde I}\text{-marked basis}\}$$
and to any morphism of $\kk$-algebras $\sigma: A \rightarrow {A'}$ the map
\[
\renewcommand{\arraystretch}{1.3}
\begin{array}{rccc}
\MFFunctor{\tilde I}(\sigma):& \MFFunctor{\tilde I}(A) &\longrightarrow& \MFFunctor{\tilde I}({A'})\\
&(G) & \longmapsto& (\sigma(G))\;.
\end{array}
\]
Note that the image $\sigma(G)$ under this map is indeed again a
$\PP{\tilde I}$-marked basis, as we are applying the functor $-\otimes_A {A'}$
to the decomposition
$\left(R_A\right)_s =  (G)_s \oplus \langle
  \mathcal {N}(\tilde I)_s \rangle_A$  for every degree $s$.
  
\begin{remark}\label{rem:lemmaMP}
Generalising \cite[Proposition 2.1]{LR2} to quasi-stable ideals, we obtain
\[
\{ (G) \subset R_A\mid G\text{ is a } \PP{\tilde I}\text{-marked basis}\}=\{ I \subset R_A \text{ ideal } \mid  R_A=I\oplus \mathcal \langle \mathcal {N}(\tilde I)\rangle_A\}.
\]
\end{remark}

The functor $\MFFunctor{\tilde I}$ is represented by the affine scheme $\MFScheme{\tilde I}$ that can be explicitly constructed by the following procedure. We consider the $\kk$-algebra $\kk[C]$, where $C$ denotes the finite set of variables
$\bigl\{C_{\alpha\eta}\mid x^\alpha \in \PP{\tilde I}, x^\eta
\in \NN(\tilde I), \deg(x^\eta)=\deg(x^\alpha)\bigr\}$, and
construct the $\PP{\tilde I}$-marked set $\mathscr  G\subset R_{\kk[C]}$ consisting of the following marked polynomials
\begin{equation}\label{polymarkKC}
g_\alpha=x^\alpha-\sum_{x^{\eta}\in \NN(\tilde I)_{\vert\alpha\vert}}C_{\alpha\eta }x^\eta 
\end{equation}
with $x^{\alpha}\in\PP{\tilde I}$.
According to Definition \ref{relazione omogenea}, we consider
\[
\mathscr G^\ast =\{x^\delta g_\alpha \ \vert \ g_\alpha \in \mathscr G, x^\delta x^\alpha \in \mathcal C_{\mathcal P}(x^\alpha)\}.
\]
Then, by the Noetherian and the confluent reduction procedure given in Definition~\ref{relazione omogenea}, for every term $x^\alpha \in \PP{\tilde I}$ and every  variable $x_i$  not belonging to $\mult(x^\alpha)$, we compute a polynomial $p_{\alpha,i}\in \langle \NN(I)_{\vert\alpha\vert+1}\rangle_A$ such that $x_i g_\alpha-p_{\alpha,i}\in \langle \mathscr G^\ast \rangle_A$. We then denote by $\mathscr U$ the ideal generated in $\kk[C]$ by the $x$-coefficients of the polynomials~$p_{\alpha,i}$. Then, we have $\MFScheme{\tilde I}=\mathrm{Spec} (\kk[C]/\mathscr U)$  (\cite[Remark 6.3]{CMR2015},\cite[Theorem 5.1]{Quot}).

From now we assume that $\tilde I$ is in particular a {\em saturated} quasi-stable ideal. This implies that $x_0$ does not divide any term of $\mathcal B_{\tilde I}$ and $R/\tilde I$ has positive Krull dimension. Then $\tilde I_{\geq t}$ is quasi-stable too, for every integer $t$, so that we can consider $\MFScheme{\tilde I_{\geq t}}$. Let $\tilde p(z)$ be the Hilbert polynomial of $R/\tilde I$ and $\HScheme{\mathbb P^n}{\tilde p(z)}$ be the Hilbert scheme that parameterises the closed subschemes of $\mathbb P^n$ having Hilbert polynomial $\tilde p(z)$. Then, $\MFScheme{\tilde I_{\geq t}}$ embeds in $\HScheme{\mathbb P^n}{\tilde p(z)}$, for every integer $t$ (see \cite[Proposition 6.13]{BCRAffine}). 

\subsection{Parametrisation of saturated ideals in quotient rings} 
Let $I$ now be a {\em saturated} ideal with a $\PP{\tilde I}$-marked basis and take $X:=\Proj(R/I)$ together with its Hilbert polynomial $p_X(z)$, which is equal to $\tilde p(z)$.

Let $\tilde J$ be a {\em saturated} quasi-stable ideal containing $\tilde I$ and let $p(z)$ be the Hilbert polynomial of $R/\tilde J$. Then, $p(z)$ is {\em smaller} than $p_X(z)$, in the sense that $p(t)\leq p_X(t)$, for $t\gg 0$.
Let $\HScheme{X}{p(z)}$ be the Hilbert scheme that parameterises the closed subschemes of $X=\Proj(R/I)$ with Hilbert polynomial~$p(z)$ and represents the Hilbert functor $\HFunctor{X}{p(z)}$. 
 
If $\PP{\tilde J}$ does not contain any term divisible by $x_1$, we set $\rho_{\tilde J}:=1$. Otherwise, we set $\rho_{\tilde J}:=\max\{\deg(x^\alpha) \ \vert \ x^\alpha \in \PP{\tilde J} \text{ is divisible by } x_1\}$.


\begin{proposition}\label{prop:sottoschema}
With the above notations, 
\begin{enumerate}
\item for every $t\geq \rho_{\tilde J}-1$, $\MFScheme{\tilde J_{\geq t}} \cap \HScheme{X}{p(z)} \cong \MFScheme{\tilde J_{\geq t+1}} \cap \HScheme{X}{p(z)}$;
\item for every $t\geq \rho_{\tilde J}-1$, $\MFScheme{\tilde J_{\geq t}} \cap \HScheme{X}{p(z)}$ is an open subscheme of $\HScheme{X}{p(z)}$.
\end{enumerate}
\end{proposition}

\begin{proof}
We recall that $\HScheme{X}{p(z)}$ is a (closed) subscheme of $\HScheme{\mathbb P^n}{p(z)}$ (e.g.~\cite[Exercise~VI-26]{EH}) and that  $\MFScheme{\tilde J_{\geq t}}$ is a (locally closed) subscheme of $\HScheme{X}{p(z)}$, for every integer $t$ (see \cite[Proposition 6.13(iii)]{BCRAffine}).
Then, the first item is a consequence of \cite[Corollary 6.11]{BCRAffine} which states $\MFScheme{\tilde J_{\geq t}} \cong \MFScheme{\tilde J_{\geq t+1}}$, for every $t\geq \rho_{\tilde J}-1$.
The second item is a consequence of the fact that, for every $t\geq \rho_{\tilde J}-1$, $\MFScheme{\tilde J_{\geq t}}$ is even an open subscheme of the Hilbert scheme $\HScheme{\mathbb P^n}{p(z)}$. Indeed, $\MFFunctor{\tilde J_{\geq t}}$ is an open subfunctor of $\HFunctor{X}{p(z)}$, like it is stated and proved in \cite[Proposition 6.13(ii)]{BCRAffine}.
\end{proof}

\begin{example}
If we consider the ideal $\tilde J:=(x_3^2,x_3x_2)$, then $\rho_{\tilde J}=1$, hence $\tilde J=\tilde J_{\geq \rho_{\tilde J}-1}$ and $\MFScheme{\tilde J} \cong \MFScheme{\tilde J_{\geq t}}$, for every $t\geq 0$. Starting from the following two marked polynomials:

$x_2x_3+x_0^2c_1 + x_0x_1c_2 + x_1^2c_3 + x_0x_2c_4 + x_1x_2c_5 + x_2^2c_6 + x_0x_3c_7 + x_1x_3c_8$,

$x_3^2+x_0^2c_9 + x_0x_1c_{10} + x_1^2c_{11} + x_0x_2c_{12} + x_1x_2c_{13} + x_2^2c_{14} + x_0x_3c_{15} + x_1x_3c_{16}$,

\noindent the defining ideal $\mathcal U  \subset \mathbb Q[c_1,\dots,c_{16}]$ of $\MFScheme{\tilde J}$ is computed obtaining

$\mathcal U=(
  c_1c_6c_7 - c_1c_4 - c_7c_9 + c_1c_{15},
  c_2c_6c_7 + c_1c_6c_8 - c_2c_4 - c_1c_5 - c_8c_9 - c_7c_{10} + c_2c_{15} + c_1c_{16},
  c_3c_6c_7 + c_2c_6c_8 - c_3c_4 - c_2c_5 - c_8c_10 - c_7c_{11} + c_3c_{15} + c_2c_{16},
  c_4c_6c_7 - c_4^2 - c_1c_6 - c_7c_{12} + c_4c_{15} - c_9,
  c_5c_6c_7 + c_4c_6c_8 - 2c_4c_5 - c_2c_6 - c_8c_{12} - c_7c_{13} + c_5c_{15} + c_4c_{16} - c_{10},
  c_6^2c_7 - 2c_4c_6 - c_7c_{14} + c_6c_{15} - c_{12},
  c_6c_7^2 - c_4c_7 + c_1,
  c_3c_6c_8 - c_3c_5 - c_8c_{11} + c_3c_{16},
  c_5c_6c_8 - c_5^2 - c_3c_6 - c_8c_{13} + c_5c_{16} - c_{11},
  c_6^2c_8 - 2c_5c_6 - c_8c_{14} + c_6c_{16} - c_{13},
  2c_6c_7c_8 - c_5c_7 - c_4c_8 + c_2,
  c_6c_8^2 - c_5c_8 + c_3,
  -c_6^2 - c_{14}).$

\noindent Although $\tilde J\not=\tilde J_{\geq 3}$ and the defining ideal $\mathcal U'$ of $\MFScheme{\tilde J_{\geq 3}}$ is contained in the ring $\mathbb Q[c_1,\dots, c_{91}]$ and so it is different from $\mathcal U$, the ring $ \mathbb Q[c_1,\dots,c_{16}]/\mathcal U$ is isomorphic to the ring $\mathbb Q[c_1,\dots,c_{91}]/\mathcal U'$ by \cite[Corollary 6.11]{BCRAffine}.   
If we consider instead the ideal $\tilde J:=(x_2^3,x_1)$ with Pommaret basis $\mathcal P_{\tilde J}=\{x_2^3, x_1, x_1x_2,x_1x_2^2\}$, then $\rho_{\tilde J}=1$ like before. Also in this case we have $\tilde J=\tilde J_{\geq \rho_{\tilde J}-1}$ and $\MFScheme{\tilde J} \cong \MFScheme{\tilde J_{\geq t}}$, for every $t\geq \rho_{\tilde J}-1 = 0$.
\end{example}

\begin{lemma}\label{lem:satEquiv}
With the notation above, let $F$ be any set of polynomials generating~$I$. If $G$ is a $\PP{\tilde J_{\geq t}}$-marked basis for some $t\geq \rho_{\tilde J}-1$, then the following statements are equivalent:
\begin{enumerate}[label=(\roman*)]
\item \label{it:satEquiv1}$(G)^{sat}\supseteq I$;
\item \label{it:satEquiv2}$(G)\supseteq I_{\geq t}$;
\item  \label{it:satEquiv3}$(G)\supseteq \{x_0^{\max\{0,t-\deg(f)\}}f\vert f\in F \}$.
\end{enumerate}
\end{lemma}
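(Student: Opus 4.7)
The plan is to route every implication through a single cancellation lemma and then close the cycle (ii) $\Rightarrow$ (iii) $\Rightarrow$ (ii) together with (i) $\Leftrightarrow$ (ii).

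The cancellation lemma I would prove first reads: if $h\in R_A$ is homogeneous of degree at least $t$ and $x_0^k h\in (G)$ for some $k\geq 0$, then $h\in (G)$. The argument uses the graded direct sum $R_A=(G)\oplus\langle\NN(\tilde J_{\geq t})\rangle_A$ from Definition~\ref{def:MarkedSet}: decompose $h=g+r$ accordingly. Since $\deg h\geq t$, the support of $r$ lies in $\NN(\tilde J)_{\deg h}$, and saturation of $\tilde J$ (equivalently $\tilde J:x_0=\tilde J$) keeps the support of $x_0^k r$ inside $\NN(\tilde J_{\geq t})$. On the other hand $x_0^k r = x_0^k h - x_0^k g \in (G)$, so the direct sum forces $x_0^k r = 0$, and since $x_0$ is a non-zero-divisor on $R_A$ we conclude $r=0$ and $h\in (G)$.

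With the lemma in hand the implications are short. For (ii) $\Rightarrow$ (iii), each polynomial $x_0^{\max\{0,t-\deg(f)\}}f$ has degree at least $t$ and lies in $I$, hence in $I_{\geq t}\subseteq (G)$. For (iii) $\Rightarrow$ (ii), I would take a homogeneous $g\in I_{\geq t}$, write $g=\sum_i h_i f_i$ with $f_i\in F$ and each summand of degree $\deg g\geq t$, and multiply by a single power $x_0^K$ large enough that $K\geq t-\deg(f_i)$ for every index $i$. Then each $h_i x_0^K f_i$ is a polynomial multiple of $x_0^{\max\{0,t-\deg(f_i)\}}f_i\in (G)$, so $x_0^K g\in (G)$, and the cancellation lemma returns $g\in (G)$. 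For (ii) $\Rightarrow$ (i), saturation preserves inclusion and $I$ is already saturated (having a $\PP{\tilde I}$-marked basis over the saturated ideal $\tilde I$), hence $(G)^{sat}\supseteq I_{\geq t}^{sat}=I^{sat}=I$. Finally, for (i) $\Rightarrow$ (ii), any $g\in I_{\geq t}\subseteq I\subseteq (G)^{sat}$ satisfies $x_0^m g\in (G)$ for some $m\geq 0$, so the lemma applies.

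The genuinely delicate step is the cancellation lemma, which relies on a twofold interplay of hypotheses: saturation of $\tilde J$ makes $x_0$ preserve $\NN(\tilde J)$, and the degree condition $\deg h\geq t$ places the support of $r$ inside $\NN(\tilde J_{\geq t})$ in the regime where this agrees with $\NN(\tilde J)$. This is also why, in (iii) $\Rightarrow$ (ii), one cannot handle the summands $h_i f_i$ one at a time --- some $f_i$ may live in degrees below $t$, where the behaviour of $\NN(\tilde J_{\geq t})$ under multiplication by $x_0$ is not controlled --- and must instead apply the lemma only after bundling them into $x_0^K g$.
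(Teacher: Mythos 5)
Your proof is correct, and it takes a genuinely more self-contained route than the paper. The paper's own proof is short because it cites two facts from a prior work: that $(G)=(G)^{sat}_{\geq t}$ and that $(G)^{sat}=(G):x_0^\infty$, and then closes the cycle $\text{(i)}\Rightarrow\text{(ii)}\Rightarrow\text{(iii)}\Rightarrow\text{(i)}$ in a few lines — in particular $\text{(iii)}\Rightarrow\text{(i)}$ is direct: each generator $f\in F$ lies in $(G):x_0^\infty=(G)^{sat}$, hence so does $I=(F)$. Your cancellation lemma re-proves the essential content of the first cited fact, namely the inclusion $(G)^{sat}_{\geq t}\subseteq(G)$, using only the direct-sum decomposition defining a marked basis and the saturation $\tilde J:x_0=\tilde J$; this is a nice transparent argument that explains \emph{why} the cited fact holds, at the cost of redeveloping it. You then route the cycle differently, proving $\text{(iii)}\Rightarrow\text{(ii)}$ by bundling a representation $g=\sum h_if_i$ into $x_0^Kg$ and cancelling, which is more work than the paper's direct $\text{(iii)}\Rightarrow\text{(i)}$; you could simplify by proving $\text{(iii)}\Rightarrow\text{(i)}$ exactly as the paper does, since your cancellation lemma already hands you $(G)^{sat}\cap R_{\geq t}\subseteq(G)$, which covers $\text{(i)}\Rightarrow\text{(ii)}$, and $(G)^{sat}\subseteq(G):x_0^\infty$ trivially holds. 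One small inaccuracy: you justify that $I$ is saturated by the existence of a $\PP{\tilde I}$-marked basis over the saturated $\tilde I$; in the paper $I$ is \emph{assumed} saturated in the set-up of Section 2.3 (the marked-basis fact is indeed true but is a theorem of [BCRAffine], not something one gets for free), so it is cleaner to simply invoke the standing assumption.
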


\begin{proof}
By \cite[Theorem 3,5 and Corollary 3.7]{BCRAffine} we have  $(G)={(G)^{sat}}_{\geq t}$ and $(G)^{sat}=((G):x_0^\infty)$. 
It is immediate that \ref{it:satEquiv1} implies \ref{it:satEquiv2} and \ref{it:satEquiv2} implies \ref{it:satEquiv3}. 
  
We now prove that item \ref{it:satEquiv3} implies item \ref{it:satEquiv1}.
By hypothesis, for every $f\in F$, we have that either $f$ belongs to $(G)\subseteq (G)^{sat}$, if $\deg(f)\geq t$, or $x_0^{t-\deg(f)}f$ belongs to $(G)$, if $\deg(f)<t$. In this latter case, $f\in ((G):x_0^\infty)=(G)^{sat}$. 
\end{proof}

Let $Z$ be a set of polynomials generating~$I$ (like, for example, the marked basis~$F$). For every $f\in Z$, we take an integer $d$ in the following way: if $\deg(f)\geq t$, then $d:=0$, otherwise $d:=t-\deg(f)$.
By a reduction relation like in Definition \ref{relazione omogenea}, we compute a polynomial $p_{f}\in \langle \NN(\tilde J_{\geq t})_{d}\rangle_A$  such that $x_0^d f-p_{f}\in \langle \mathscr G^\ast \rangle_A$. We then denote by $\mathscr V_Z$ the ideal generated in $\kk[C]$ by the $x$-coefficients of the polynomials~$p_{f}$.

\begin{theorem}\label{thm:constrRelScheme1}
With the above notations, $\MFScheme{\tilde J_{\geq t}} \cap \HScheme{X}{p(z)}$ is isomorphic to the affine scheme $\Spec(\kk[C]/(\mathscr U+\mathscr V_Z))$, for every $t\geq \rho_{\tilde J}-1$. 
\end{theorem}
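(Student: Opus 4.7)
The plan is to identify the $A$-valued points of the two schemes for every Noetherian $\kk$-algebra $A$ and then conclude by Yoneda. Since $\MFScheme{\tilde J_{\geq t}} = \Spec(\kk[C]/\mathscr U)$ already represents the marked functor $\MFFunctor{\tilde J_{\geq t}}$, and Proposition \ref{prop:sottoschema} ensures that the intersection $\MFScheme{\tilde J_{\geq t}} \cap \HScheme{X}{p(z)}$ is an open subscheme of $\MFScheme{\tilde J_{\geq t}}$, the task reduces to checking that the additional equations carving out this intersection inside $\MFScheme{\tilde J_{\geq t}}$ are exactly the generators of $\mathscr V_Z$ modulo $\mathscr U$.

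By Remark \ref{rem:lemmaMP}, an $A$-point of $\MFScheme{\tilde J_{\geq t}}$ is an ideal $(G) \subset R_A$ admitting a $\PP{\tilde J_{\geq t}}$-marked basis $G$. Such a point lies in $\HScheme{X}{p(z)}(A)$ exactly when the associated closed subscheme of $\mathbb{P}^n_A$ is contained in $X \times \Spec A$, equivalently when $(G)^{sat} \supseteq I \cdot R_A$. By Lemma \ref{lem:satEquiv} applied to the generating set $Z$ of $I$, this is in turn equivalent to $x_0^{d_f} f \in (G)$ for every $f \in Z$, where $d_f := \max\{0, t - \deg(f)\}$.

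Next I exploit the confluent reduction $\rid{G^\ast}$. Because $R_A = (G) \oplus \langle \NN(\tilde J_{\geq t}) \rangle_A$ is a graded direct sum and the reduction is Noetherian and confluent, $x_0^{d_f} f$ lies in $(G)$ precisely when its $G^\ast$-normal form vanishes. The universal normal form $p_f$ is obtained by applying the same syntactic reduction to the universal marked set $\mathscr G$ over $\kk[C]$, and the key observation is that this reduction commutes with specialisation: if $\varphi : \kk[C]/\mathscr U \to A$ is the morphism defining the $A$-point, each universal step $h \mapsto h - c\, x^\eta g_\alpha$ with $c \in \kk[C]$ becomes the step $\varphi(h) \mapsto \varphi(h) - \varphi(c)\, x^\eta \varphi(g_\alpha)$ over $A$, so $\varphi(p_f)$ is the $G^\ast$-normal form of $x_0^{d_f} f$. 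By uniqueness of the normal form, $x_0^{d_f} f \in (G)$ for every $f \in Z$ is equivalent to $\varphi$ annihilating all $x$-coefficients of all $p_f$, i.e.\ to $\varphi$ factoring through $\kk[C]/(\mathscr U + \mathscr V_Z)$.

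The resulting bijection is natural in $A$, so Yoneda yields the desired isomorphism of schemes. The main subtle point is the commutativity of the reduction procedure with the specialisation morphism $\varphi$ described above; once this is made precise, the rest of the argument is a direct combination of Lemma \ref{lem:satEquiv} with the universal property of $\MFScheme{\tilde J_{\geq t}}$.
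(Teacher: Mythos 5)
Your proof is correct and follows essentially the same route as the paper's: both identify $A$-points of $\MFScheme{\tilde J_{\geq t}}$ via the specialisation morphism $\varphi\colon \kk[C]\to A$ and then invoke Lemma~\ref{lem:satEquiv} to show that the extra condition of lying in $\HScheme{X}{p(z)}$ is equivalent to $\ker\varphi\supseteq\mathscr V_Z$, concluding by Yoneda. The one place you are more explicit than the paper is the observation that $\rid{\mathscr G^\ast}$-reduction commutes with specialisation (so that $\varphi(p_f)$ is indeed the $G^\ast$-normal form of $x_0^{d_f}f$, by confluence and uniqueness), a point the paper leaves implicit; this is a genuine technical detail worth spelling out, but it does not change the structure of the argument.
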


\begin{proof}
Consider the $\PP{\tilde J_{\geq t}}$-marked set $\mathscr  G\subset R_{\kk[C]}$ made of the polynomials in \eqref{polymarkKC}. For every $\kk$-algebra~$A$, a $\PP{\tilde J}$-marked set in $R_A$ is uniquely and completely given by a $\kk$-algebra morphism $\varphi: \kk[C]\rightarrow A$ defined by $\varphi(C_{\alpha\gamma})=c_{\alpha\gamma}\in A$, for every $x^\alpha \in \PP{\tilde J_{\geq t}}, x^\gamma \in \NN(\tilde J_{\geq t})_{\vert\alpha\vert}$. We extend $\varphi$ to a morphism from $ R_{\kk[C]}$ to $R_A$ in the obvious way.

It is sufficient to observe  that $\varphi(\mathscr G)\subset R_A$ is a $\PP{\tilde J_{\geq t}}$-marked basis if and only if the generators of $\mathscr U$ vanish at $c_{\alpha\gamma}\in A$.  Furthermore, by Lemma \ref{lem:satEquiv}, the saturation of the ideal generated by $\varphi(\mathscr G)$ in $R_A$ contains $I$ if and only if the generators of $\mathscr V_Z$ vanish at $c_{\alpha\gamma}\in A$.

Hence, $\varphi(\mathscr G)$ is a $\PP{\tilde J_{\geq t}}$-marked basis in $R_A$ and the saturation of the ideal it generates in $R_A$ contains $I$ if only if $\ker(\varphi)\supseteq \mathscr U+\mathscr V_Z$. In this case, $\varphi$ factors through $\kk[C]/(\mathscr U+\mathscr V_Z)$. The induced $\kk$-algebra morphism  from $\kk[C]/(\mathscr U+\mathscr V_Z)$ to $A$ defines a scheme morphism $\mathrm{Spec}(A) \rightarrow \mathrm{Spec}(\kk[C]/(\mathscr U+\mathscr V_Z))$. Therefore, the scheme $\Spec(\kk[C]/(\mathscr U+\mathscr V_Z))$ is isomorphic to $\MFScheme{\tilde J_{\geq t}} \cap \HScheme{X}{p(z)}$.
\end{proof}

\begin{remark}
Observe that the ideal $\mathscr V_Z \subseteq \kk[C]$ depends on the chosen generating set $Z$ of $I$. However, if $Z'\subseteq \kk[C]$ is another set of polynomials generating $I$, by Yoneda's Lemma we have that  $\Spec(\kk[C]/(\mathscr U+\mathscr V_Z))\simeq \Spec(\kk[C]/(\mathscr U+\mathscr V_{Z'}))$.
\end{remark}

In the following sections we will give an alternative construction of the affine scheme $\MFScheme{\tilde J_{\geq t}} \cap \HScheme{X}{p(z)}$, for some ideals $I$.

\section{Relative marked bases and reduction relations}
\label{sec:relative marked bases}
 
Let $\tilde J\supseteq \tilde I$ be quasi-stable ideals in $R$.
With an abuse of notation, we keep on writing $\tilde J$ (resp.~$\tilde I$) for $\tilde J\cdot R_A$ (resp.~$\tilde I\cdot R_A$). 

\begin{definition} \label{def:rel marked set} 
A subset $H$ of a $\PP{\tilde J}$-marked set is called a {\em $\PP{\tilde J}$-marked set relative to $\tilde I$} if the head terms of the marked polynomials in $H$ are the terms in $\PP{\tilde J}\setminus \PP{\tilde I}$.
\end{definition}

Let $I$ be an ideal belonging to $\MFFunctor{\tilde I}(\kk)$, i.e.~$I$ is generated by a $\PP{\tilde I}$-marked basis $F\subseteq R$ and, equivalently, the graded decomposition $R_A=I\oplus \langle \NN(\tilde I)\rangle_{A}$ holds (with an abuse of notation, we keep on writing $I$ for $I\cdot R_A$).
 
For every polynomial $p\in R_{A}$, we denote by $\Nf_{I}(p)$ the normal form of $p$ modulo $I$, which is the unique polynomial in $\langle \mathcal N(\tilde I)\rangle_A$ such that $p-\Nf_{I}(p)\in  I$. Recall that $\Nf_{I}(p)$ is explicitely computed by the reduction relation of Definition \ref{relazione omogenea}.

\begin{lemma}\label{lemma:lemma1} With the notation above, let $J\subseteq R_A$ be an ideal containing $I$.
\begin{enumerate}[label=(\roman*)]
\item\label{it1:lemma1}
 For every polynomial $p\in J$, $\Nf_{I}(p)$ belongs to $\langle \NN(\tilde I)\rangle_{A}\cap J$. In particular, $\Nf_{I}(p)$ belongs to $J\setminus I$, unless it is null.
\item\label{it2:lemma1} The graded decomposition $J = I\oplus (\langle \NN(\tilde I)\rangle_{A} \cap J)$ holds.
\end{enumerate}
\end{lemma}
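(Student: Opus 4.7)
The proof is essentially a direct unwinding of the definition of normal form together with the graded direct sum decomposition $R_A = I \oplus \langle \NN(\tilde I)\rangle_A$, which is available because $F$ is a $\PP{\tilde I}$-marked basis of $I$. I expect no real obstacle; the main thing is to be careful about the containment $I \subseteq J$ at the right moments.

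For part \ref{it1:lemma1}, the plan is to observe that by definition $\Nf_I(p)$ lies in $\langle \NN(\tilde I)\rangle_A$ and satisfies $p - \Nf_I(p) \in I$. Since $I \subseteq J$ by hypothesis and $p \in J$, rewriting $\Nf_I(p) = p - (p - \Nf_I(p))$ shows $\Nf_I(p) \in J$. Combining, $\Nf_I(p) \in \langle \NN(\tilde I)\rangle_A \cap J$. For the final assertion, if $\Nf_I(p) \in I$ then $\Nf_I(p) \in I \cap \langle \NN(\tilde I)\rangle_A$, which is $\{0\}$ because the sum $R_A = I \oplus \langle \NN(\tilde I)\rangle_A$ is direct; hence $\Nf_I(p) = 0$.

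For part \ref{it2:lemma1}, I will check the two inclusions separately and then directness. The inclusion $I + (\langle \NN(\tilde I)\rangle_A \cap J) \subseteq J$ is immediate since $I \subseteq J$. Conversely, for any $p \in J$ the identity $p = (p - \Nf_I(p)) + \Nf_I(p)$ together with part \ref{it1:lemma1} places the two summands in $I$ and $\langle \NN(\tilde I)\rangle_A \cap J$ respectively. Directness of the sum is inherited from $I \cap \langle \NN(\tilde I)\rangle_A = \{0\}$, giving $I \cap (\langle \NN(\tilde I)\rangle_A \cap J) = \{0\}$.

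Finally, to verify that the decomposition is graded in the sense specified in the preliminaries, I will note that $J$ is a homogeneous ideal (as are $I$ and $\langle \NN(\tilde I)\rangle_A$), so all three pieces split as direct sums of their homogeneous components. For each degree $s$, applying the already-graded decomposition $(R_A)_s = I_s \oplus \langle \NN(\tilde I)\rangle_{A,s}$ to an element of $J_s$ and repeating the argument above yields $J_s = I_s \oplus (\langle \NN(\tilde I)\rangle_{A,s} \cap J_s)$, completing the proof.
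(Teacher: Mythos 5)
Your proof is correct and is precisely the ``standard argument'' that the paper invokes without writing out: it unwinds the definition of $\Nf_I$ and the direct sum $R_A = I \oplus \langle \NN(\tilde I)\rangle_A$, and correctly handles gradedness via the blanket homogeneity assumption. No discrepancy with the paper's intent.
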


\begin{proof} The proof follows from standard arguments. 
\end{proof}
%

\begin{definition} \label{def:rel marked basis} 
With the notation above, a $\PP{\tilde J}$-marked set $H$ relative to $\tilde I$ is called a {\em $\PP{\tilde J}$-marked basis relative to $I$} if the following graded decomposition holds, where $J\subseteq R_A$ is the ideal generated by $F\cup H$:
\begin{equation}\label{eq:decomposition}
R_A= I\oplus (\langle \NN(\tilde I)\rangle_A\cap J)
\oplus \langle \NN(\tilde J)\rangle_A.
\end{equation}
\end{definition}

\begin{theorem}\label{thm:linkToGenMethod}
With the notation above, let $H\subset R_A$ be a $\PP{\tilde J}$-marked set relative to $\tilde I$ and $J$ be the ideal generated by $F\cup H$.
Then, $H$ is a $\PP{\tilde J}$-marked basis relative to $I$ if and only if $J$ is generated by a $\PP{\tilde J}$-marked basis containing $H$.
\end{theorem}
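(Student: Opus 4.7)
The plan is to route both directions of the biconditional through the single graded identity $R_A = J \oplus \langle \NN(\tilde J)\rangle_A$. Since $I \subseteq J$ and $F$ is a $\PP{\tilde I}$-marked basis of $I$, Lemma~\ref{lemma:lemma1}(ii) yields the graded decomposition $J = I \oplus (\langle \NN(\tilde I)\rangle_A \cap J)$; substituting this into~\eqref{eq:decomposition} shows at once that \eqref{eq:decomposition} is equivalent to $R_A = J \oplus \langle \NN(\tilde J)\rangle_A$. The backward direction is then immediate, because a $\PP{\tilde J}$-marked basis of $J$ is by definition a $\PP{\tilde J}$-marked set generating an ideal that admits precisely this decomposition.

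For the forward direction, Remark~\ref{rem:lemmaMP} applied to $\tilde J$ already guarantees that \emph{some} $\PP{\tilde J}$-marked basis of $J$ exists, but the real content is to produce one containing the prescribed $H$. The head terms of $H$ are $\PP{\tilde J}\setminus \PP{\tilde I}$, so the head terms still to be supplied are exactly those in $\PP{\tilde J}\cap \PP{\tilde I}$. For each such $x^\alpha$, the decomposition $R_A = J \oplus \langle \NN(\tilde J)\rangle_A$ determines unique $g_\alpha \in J$ and $r_\alpha \in \langle \NN(\tilde J)\rangle_A$ with $x^\alpha = g_\alpha + r_\alpha$; since $\supp(r_\alpha) \subseteq \NN(\tilde J)$ and $x^\alpha \in \tilde J$, the term $x^\alpha$ does not appear in $r_\alpha$, so $g_\alpha = x^\alpha - r_\alpha$ is genuinely marked on $x^\alpha$ with $\supp(g_\alpha - x^\alpha) \subseteq \NN(\tilde J)$. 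Collecting everything together, $G := H \cup \{g_\alpha \mid x^\alpha \in \PP{\tilde J}\cap \PP{\tilde I}\}$ is a $\PP{\tilde J}$-marked set contained in $J$ whose head terms exhaust $\PP{\tilde J}$.

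The remaining and most delicate step, which I expect to be the main obstacle, is to verify that this concretely extended $G$ is actually a $\PP{\tilde J}$-marked basis of $J$, and not merely a marked set generating some possibly smaller ideal. Here I would invoke the fact, recalled after Definition~\ref{relazione omogenea}, that the reduction relation $\rid{G^\ast}$ is Noetherian and confluent for any $\PP{\tilde J}$-marked set $G$: every element of $R_A$ reduces modulo $G$ to some element of $\langle \NN(\tilde J)\rangle_A$, which yields $R_A = (G) + \langle \NN(\tilde J)\rangle_A$. Combined with the obvious inclusion $(G) \subseteq J$ and the fact that $J \cap \langle \NN(\tilde J)\rangle_A = 0$, a short diagram chase forces both $(G) = J$ and the directness of the sum, so $G$ is the desired $\PP{\tilde J}$-marked basis of $J$ containing $H$.
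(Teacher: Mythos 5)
Your proof is correct. The two arguments agree entirely on the ``$\Leftarrow$'' direction and on the key reduction: by Lemma~\ref{lemma:lemma1}\ref{it2:lemma1}, decomposition~\eqref{eq:decomposition} is equivalent to $R_A = J \oplus \langle \NN(\tilde J)\rangle_A$. Where you diverge from the paper is in the ``$\Rightarrow$'' direction. The paper invokes Remark~\ref{rem:lemmaMP} to obtain \emph{some} $\PP{\tilde J}$-marked basis $G$ of $J$, and then shows $H\subseteq G$ by a one-line uniqueness argument: for each $h\in H$, the element $g\in G$ with $\Ht(g)=\Ht(h)$ satisfies $h-g\in J\cap\langle\NN(\tilde J)\rangle_A=\{0\}$, because both $h$ and $g$ lie in $J$ and have all non-head support in $\NN(\tilde J)$. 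You instead build the marked basis from the ground up: you first manufacture explicit polynomials $g_\alpha\in J$ marked on the missing head terms $x^\alpha\in\PP{\tilde J}\cap\PP{\tilde I}$ via the direct-sum projection, set $G=H\cup\{g_\alpha\}$, and then prove $(G)=J$ by combining the reduction-relation fact $R_A=(G)+\langle\NN(\tilde J)\rangle_A$ with $(G)\subseteq J$ and $J\cap\langle\NN(\tilde J)\rangle_A=0$. Both are sound; your chase is a valid replacement. What the paper's route buys is economy: once Remark~\ref{rem:lemmaMP} hands you the marked basis $G$, the same intersection $J\cap\langle\NN(\tilde J)\rangle_A=\{0\}$ that you use at the end already forces each $h\in H$ to coincide with its head-term partner in $G$, with no need to exhibit the remaining $g_\alpha$ or to re-derive that $G$ generates $J$. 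What your route buys is concreteness: it shows exactly how to complete $H$ to the marked basis, which is in the spirit of the constructive results that follow (Corollary~\ref{cor:ridForTildeI}, Theorem~\ref{th:representationTildeI}), even if it is not needed for the statement at hand.
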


\begin{proof}
If $J$ is generated by a $\PP{\tilde J}$-marked basis $G$  containing $H$ then $R_A=J\oplus \langle \mathcal N(\tilde J)\rangle_A$. Since $J$ contains $I$ by construction, by Lemma \ref{lemma:lemma1}\ref{it2:lemma1} we have that $J=I\oplus (\langle \NN(\tilde I)\rangle_A\cap J)$, and so $H$ is a $\PP{\tilde J}$-marked basis relative to $I$. 

If $H$ is a $\PP{\tilde J}$-marked basis relative to $I$, the decomposition \eqref{eq:decomposition} holds. 
Since $J=I\oplus  (\langle \NN(\tilde I)\rangle_{A} \cap J)$ by Lemma \ref{lemma:lemma1}\ref{it2:lemma1}, from decomposition \eqref{eq:decomposition} we obtain $R_A=J\oplus \langle \NN(\tilde J)\rangle_A$, which means that $J$ is generated by a $\PP{\tilde J}$-marked basis $G$ (see Remark \ref{rem:lemmaMP}). It remains to show that $H\subseteq G$.
By the hypotheses, for every $h\in H$ there is $g\in G$ with $\Ht(h)=\Ht(g)$. Hence, by construction, the polynomial $h-g$ belongs to $J\cap \langle \NN(\tilde J)\rangle_A=\{0\}$ and then $h=g$. 
\end{proof}

If $H$ is a $\PP{\tilde J}$-marked set relative to $\tilde I$, hence contained in a  $\PP{\tilde J}$-marked set $G$, we have the reduction relation $\rid{H^\ast}$ thanks to Lemma \ref{lem:subred}. 

Despite the nice properties which the reduction relation $\rid{H^\ast}$ inherits from $\rid{G^\ast}$, the following example shows that it is not always true that, for a given polynomial $p$, $p \crid{G^\ast} 0$ is equivalent either to $p \crid{H^\ast} p_{H^\ast}$ with $p_{H^\ast}\in \langle F^\ast\rangle_A$, or to $p \crid{F^\ast} p_{F^\ast}$ with $p_{F^\ast}\in \langle H^\ast\rangle_A$, for a given $\PP{\tilde I}$-marked set $F$. 

Hence, in general $(p_{H^\ast})_{F^\ast}$ and $(p_{F^\ast})_{H^\ast}$ are not the same polynomial. Moreover, the following example also shows that we might have $p=(p_{F^\ast})_{H^\ast}$ (resp. $p=(p_{H^\ast})_{F^\ast}$). This means that, although $ \rid{F^\ast}$ and $\rid{H^\ast}$ are both Noetherian, the reduction process obtained by first computing the complete reduction of a polynomial by  $\rid{F^\ast}$ (resp. by $\rid{H^\ast}$) and successively the complete reduction by $ \rid{H^\ast}$ (resp. by  $\rid{F^\ast}$) is not Noetherian.


\begin{example}\label{ex:counterexample}
In the polynomial ring $\kk[x_0,x_1,x_2,x_3]$, consider the quasi-stable ideal $\tilde I=(x_3^2,x_2^3)$, with Pommaret basis $\mathcal P_{\tilde I}=\{x_3^2,x_3x_2^3,x_2^3\}$, and the $\mathcal P_{\tilde I}$-marked basis $F=\{x_3^2,x_3x_2^3,x_2^3-3x_3x_2^2\}$. Let $I$ be the ideal generated by $F$. Then take the quasi-stable ideal $\tilde J=(x_3^2,x_3x_2,x_2^3)$, which contains $\tilde I$ and has  Pommaret basis $\mathcal P_{\tilde J}=\{x_3^2,x_3x_2,x_2^3\}$. Moreover, take the $\mathcal P_{\tilde J}$-marked set $H=\{x_3x_2-4x_2^2\}$ relative to $\tilde I$. In particular, {\em $H$ is a $\mathcal P_{\tilde J}$-marked basis relative to $I$} because it is contained in the $\mathcal P_{\tilde J}$-marked basis $G=\{x_3^2,x_3x_2-4x_2^2,x_2^3\}$ and $J=(G)=(H\cup F)$. 

Let $h=x_3x_2-4x_2^2$ be the unique polynomial of $H$. Then $x_3h \crid{H^\ast} f:=x_3^2x_2-16x_2^3$, where $f$ does not belong to $I=\langle F^\ast \rangle_A$. Moreover, $x_3h \crid{F^\ast} g:=-4x_2^2x_3$ where $g$ does not belong to $\langle H^\ast \rangle_A$. 

Let us now consider $x_2^3$, which is reduced with respect to $\rid{H^\ast}$. If we rewrite $x_2^3$ first by $\rid{F^\ast}$ and then by $\rid{H^\ast}$ we find $12x_2^3$, obtaining a loop, unless the characteristic of the field is $2$ or $3$.
 
If we consider $x_2^2x_3$ and rewrite it first by $\rid{H^\ast}$ and then by $\rid{F^\ast}$ we find $12x_2^2x_3$, obtaining a loop again, unless the characteristic of the field is $2$ or $3$.
\end{example}

The problems that Example \ref{ex:counterexample} highlights are new with respect to both the theory of marked bases and the theory of relative Gr\"obner bases and involutive bases. 

However, there is a relevant case in which the successive application of the relations $\rid{H^\ast}$ and $\rid{F^\ast}$ has a good behaviour. 


\begin{proposition}\label{prop:luckytails}
With the notation above, let $H$ be a $\PP{\tilde J}$-marked set relative to $\tilde I$ and $F':=\{f\in F \ \vert \ \Ht(f)\in \PP{\tilde I}\cap\PP{\tilde J}\}$. If $G:=H\cup F'$ is a $\PP{\tilde J}$-marked set, then 
\begin{itemize}
\item[(i)] For every $p\in R_A$, $p\crid{G^\ast}0$ is equivalent to $p\crid{H^\ast} r$ with $r\in \langle {F'}^\ast\rangle$.
\item[(ii)] $H$ is a $\PP{\tilde J}$-marked basis relative to $I$ if and only if $H\cup F'$ is the $\PP{\tilde J}$-marked basis of $J=(H\cup F)$.
\end{itemize}
\end{proposition}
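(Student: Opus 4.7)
My strategy is to base both parts on the direct-sum decomposition associated with a Pommaret-marked set, together with Theorem \ref{thm:linkToGenMethod}. By Lemma \ref{lem:subred}, applied to $G$, $H$, and $F'$ each viewed as a subset of the $\PP{\tilde J}$-marked set $G$, the three reduction relations $\rid{G^\ast}$, $\rid{H^\ast}$, $\rid{F'^\ast}$ are all Noetherian and confluent. Consequently an element reduces to $0$ under $\rid{G^\ast}$ exactly when it lies in $\langle G^\ast\rangle_A=\langle H^\ast\rangle_A+\langle F'^\ast\rangle_A$, and the direct sum $R_A=\langle G^\ast\rangle_A\oplus\langle \NN(\tilde J)\rangle_A$ holds.

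For part (i), the implication ``$\Leftarrow$'' is immediate: if $p\crid{H^\ast} r$ with $r\in\langle F'^\ast\rangle_A$, then $p-r\in\langle H^\ast\rangle_A$, so $p\in\langle F'^\ast\rangle_A+\langle H^\ast\rangle_A=\langle G^\ast\rangle_A$, whence $p\crid{G^\ast}0$. For ``$\Rightarrow$'' I would observe that the $H^\ast$-normal form map $\rho$ is the $A$-linear projection onto $\langle\T\setminus\bigsqcup_{h\in H}\mathcal C_{\mathcal P}(\Ht(h))\rangle_A$ along $\langle H^\ast\rangle_A$. Writing $p=p_H+p_{F'}$ with $p_H\in\langle H^\ast\rangle_A$ and $p_{F'}\in\langle F'^\ast\rangle_A$, linearity of $\rho$ gives $r=\rho(p)=\rho(p_{F'})$, so the claim reduces to showing $\rho(\langle F'^\ast\rangle_A)\subseteq\langle F'^\ast\rangle_A$. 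This is the main obstacle of the proof. Here I would use that $G$ is a $\PP{\tilde J}$-marked set, which forces every $f'\in F'$ to have its tail in $\langle\NN(\tilde J)\rangle_A$; consequently, when an $H^\ast$-reduction step subtracts $\lambda x^\delta h$ from an element of $\langle F'^\ast\rangle_A$, the new terms produced by the tail of $h$ lie either in $\langle\NN(\tilde J)\rangle_A$ or in $F'$-cones, so the process can be controlled by a Noetherian induction on the Pommaret-ordered supports, allowing me to conclude that the eventual $H^\ast$-irreducible result remains in $\langle F'^\ast\rangle_A$.

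Part (ii) should then follow from Theorem \ref{thm:linkToGenMethod}. For ``$\Rightarrow$'', assuming $H$ is a $\PP{\tilde J}$-marked basis relative to $I$, Theorem \ref{thm:linkToGenMethod} furnishes a $\PP{\tilde J}$-marked basis $G_J$ of $J$ with $G_J\supseteq H$. Both $G_J\setminus H$ and $F'$ then consist of marked polynomials with head terms in $\PP{\tilde I}\cap\PP{\tilde J}$ and, by the hypothesis that $G=H\cup F'$ is a $\PP{\tilde J}$-marked set, tails in $\langle\NN(\tilde J)\rangle_A$. For each common head term $x^\beta$, the corresponding polynomials $g'_\beta\in G_J\setminus H$ and $f'_\beta\in F'$ both lie in $J$, and their difference lies in $J\cap\langle\NN(\tilde J)\rangle_A=\{0\}$; hence $G_J=H\cup F'=G$ is the $\PP{\tilde J}$-marked basis of $J$. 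The converse is immediate: if $G=H\cup F'$ is a $\PP{\tilde J}$-marked basis of $J=(H\cup F)$ containing $H$, then Theorem \ref{thm:linkToGenMethod} yields that $H$ is a $\PP{\tilde J}$-marked basis relative to $I$.
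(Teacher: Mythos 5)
Your reduction of part (i) ``$\Rightarrow$'' to the inclusion $\rho(\langle F'^\ast\rangle_A)\subseteq\langle F'^\ast\rangle_A$ pinpoints where the work must happen, but the argument you sketch for that inclusion does not hold. It is not true that after subtracting $\lambda x^\delta h_\beta$ from an element of $\langle F'^\ast\rangle_A$ ``the new terms produced by the tail of $h$ lie either in $\langle\NN(\tilde J)\rangle_A$ or in $F'$-cones'': the tail of $h_\beta$ lies in $\NN(\tilde J)$, but $x^\delta\cdot\NN(\tilde J)$ can land in $H$-cones, and even when the eventual result lies in $\langle\T\setminus\bigcup_{h\in H}\mathcal C_{\mathcal P}(\Ht(h))\rangle_A$ it need not be an $A$-combination of $F'^\ast$. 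The appeal to a ``Noetherian induction on the Pommaret-ordered supports'' is a name, not an argument, and the inclusion in fact fails as soon as $F'$ has nontrivial tails. Take $R=\kk[x_0,x_1,x_2]$, $\tilde I=(x_2^2)$ with $\PP{\tilde I}=\{x_2^2\}$, $\tilde J=(x_2^2,x_1x_2)$ with $\PP{\tilde J}=\{x_2^2,x_1x_2\}$, the $\PP{\tilde I}$-marked basis $F=F'=\{f=x_2^2+x_0x_2\}$ and $H=\{h=x_1x_2\}$; then $G=H\cup F'$ is a $\PP{\tilde J}$-marked set (even a $\PP{\tilde J}$-marked basis). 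For $p=x_1f=x_1x_2^2+x_0x_1x_2\in\langle F'^\ast\rangle_A$ one has $p\crid{G^\ast}0$, but the only $H^\ast$-step subtracts $x_0h=x_0x_1x_2$ (since $x_0x_1x_2\in\mathcal C_{\mathcal P}(x_1x_2)$), giving $p\crid{H^\ast}x_1x_2^2$, and $x_1x_2^2\notin\langle F'^\ast\rangle_A$: writing $x_1x_2^2=c_0x_0f+c_1x_1f+c_2x_2f$ and comparing the coefficients of $x_1x_2^2$ and of $x_0x_1x_2$ forces $c_1=1$ and $c_1=0$. So $\rho$ does not preserve $\langle F'^\ast\rangle_A$.

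Note that the paper's own one-line argument has exactly the same gap: from $p=\sum_{H^\ast}c_{\beta\eta}x^\eta h_\beta+\sum_{F'^\ast}c_{\alpha\gamma}x^\gamma f_\alpha$ it reads off $p\crid{H^\ast}\sum_{F'^\ast}c_{\alpha\gamma}x^\gamma f_\alpha$, which silently assumes that the second sum is already $H^\ast$-reduced. This is automatic precisely when $I=\tilde I$, because then $F'$ consists of pure terms, every element of $F'^\ast$ is a single term in a $\mathcal C_{\mathcal P}(x^\alpha)$ with $x^\alpha\in\PP{\tilde I}\cap\PP{\tilde J}$, hence $H^\ast$-irreducible, and the inclusion you need is trivial; that is also the only case in which the paper later invokes this proposition (Corollary~\ref{cor:ridForTildeI}). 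So you should either add the hypothesis $I=\tilde I$ or find a genuinely different argument, and in either case spell out why $F'^\ast$ is stable under $H^\ast$-reduction rather than invoking an unnamed induction. Your proof of part (ii), by contrast, is correct and self-contained: it uses only Theorem~\ref{thm:linkToGenMethod} together with $J\cap\langle\NN(\tilde J)\rangle_A=\{0\}$ for the marked basis it produces, and does not actually need part (i) at all.
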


\begin{proof}
For item (i), assume that $p$ reduces to $0$ by $G^\ast$. Since $G$ is equal to $H\cup F'$, this means that we have the following expression:
\[
p=\sum_{H^\ast}c_{\beta\eta}x^\eta h_\beta+\sum_{{F'}^\ast}c_{\alpha\gamma }x^\gamma f_\alpha.
\]
The above equality gives us that $p\crid{H^\ast}\sum_{{F'}^\ast}c_{\alpha\gamma }x^\gamma f_\alpha$, and the latter is obviously an element in $\langle {F'}^\ast\rangle_A$.
Vice versa, if $p$ reduces to an element $r\in \langle {F'}^\ast\rangle_A$ by $H^\ast$, then $r\crid{G^\ast}0$, being $F'\subset G$. Item (ii) now follows from Theorem \ref{thm:linkToGenMethod}.
\end{proof}

If $I=\tilde I$, the hypotheses of Proposition \ref{prop:luckytails} are satisfied because $F'=\PP{\tilde I}\cap\PP{\tilde J}$. 

\begin{corollary}\label{cor:ridForTildeI}
Let $\tilde J\supseteq \tilde I$ be quasi-stable ideals, $H$ a $\PP{\tilde J}$-marked set relative to $\tilde I$, and $J$ the ideal generated by $\PP{\tilde I}\cup H$. 
Then $H$ is a $\PP{\tilde J}$-marked basis relative to $\tilde I$ if and only if:
\begin{enumerate}[label=(\roman*)]
\item $\forall h_\beta \in H$, $\forall x_i>\min(\Ht(h_\beta))$, $x_i h_\beta\crid{H^\ast }r_{\beta,i}\in \tilde I$
\item $\forall x^\alpha \in \PP{\tilde I}\cap\PP{\tilde J}$, $\forall x_i>\min(x^\alpha)$, $x_ix^\alpha\crid{H^\ast }r_{\alpha,i}\in \tilde I$
\item \label{it3:thmridTildeI} $\forall x^\gamma\in \mathcal B_{\tilde I}\setminus \PP{\tilde J}$, $x^\gamma\crid{H^\ast}r_\gamma\in \tilde I$.
\end{enumerate}
\end{corollary}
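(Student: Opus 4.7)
My plan is to apply Proposition \ref{prop:luckytails} with the distinguished $\PP{\tilde I}$-marked basis $F=\PP{\tilde I}$ of $\tilde I$, so that $I=\tilde I$ and $F'=\PP{\tilde I}\cap\PP{\tilde J}$. Then $H$ is a $\PP{\tilde J}$-marked basis relative to $\tilde I$ if and only if $G:=H\cup F'$ is the $\PP{\tilde J}$-marked basis of $J=(\PP{\tilde I}\cup H)$. The remaining task is to translate this latter condition into the concrete reductions (i), (ii), (iii) appearing in the statement.

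The key technical ingredient I would establish is the following observation: \emph{a polynomial $p\in R_A$ which is reduced with respect to $\rid{H^\ast}$ and belongs to $\tilde I$ necessarily lies in $\langle {F'}^\ast\rangle_A$.} The proof uses the disjoint cone decompositions of $\tilde I$ and $\tilde J$. Any term $x^\gamma\in\tilde I$ lies in a unique cone $\mathcal C_{\mathcal P}(x^\alpha)$ with $x^\alpha\in\PP{\tilde I}$; if $x^\alpha\in \PP{\tilde I}\cap\PP{\tilde J}$, then $x^\gamma\in \langle{F'}^\ast\rangle_A$ and we are done. Otherwise $x^\alpha\in\PP{\tilde I}\setminus\PP{\tilde J}$, but $x^\alpha\in\tilde J$ so it lies in some cone $\mathcal C_{\mathcal P}(x^\beta)$ with $x^\beta\in\PP{\tilde J}$ properly dividing $x^\alpha$; the disjoint cone property of $\PP{\tilde I}$ then forces $x^\beta\in\PP{\tilde J}\setminus\PP{\tilde I}=\Ht(H)$, and since $\min(x^\alpha)\geq\min(x^\beta)$ the multiplicative variables of $x^\alpha$ are multiplicative for $x^\beta$, so cone transitivity gives $x^\gamma\in\mathcal C_{\mathcal P}(x^\beta)$. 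This contradicts $p$ being reduced with respect to $\rid{H^\ast}$, so in $p$ the second case cannot occur and all terms of $p$ lie in $\langle{F'}^\ast\rangle_A$.

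Granted this observation, the corollary falls out. For the \emph{reverse} direction, suppose (i), (ii), (iii) hold. Applied to each $g\in G$ and each non-multiplicative $x_i>\min(\Ht(g))$, items (i) and (ii) give $x_i g\crid{H^\ast}r$ with $r\in\tilde I$, whence $r\in\langle{F'}^\ast\rangle_A$ by the observation; by Proposition~\ref{prop:luckytails}(i) this means $x_ig\crid{G^\ast}0$, so the standard involutive (Buchberger-like) criterion for $\PP{\tilde J}$-marked bases (see \cite[Theorem 5.13]{CMR2015}) certifies $G$ as a $\PP{\tilde J}$-marked basis. In the same way (iii) shows that each $x^\gamma\in\mathcal B_{\tilde I}\setminus\PP{\tilde J}$ reduces via $H^\ast$ to an element of $\langle{F'}^\ast\rangle_A\subseteq(G)$, so $x^\gamma\in(G)$; combined with $\mathcal B_{\tilde I}\cap\PP{\tilde J}\subseteq F'\subseteq G$ this gives $\mathcal B_{\tilde I}\subseteq(G)$, hence $\tilde I\subseteq(G)$ and $J=(G)$. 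Proposition~\ref{prop:luckytails}(ii) concludes this direction. For the \emph{forward} direction, if $H$ is a $\PP{\tilde J}$-marked basis relative to $\tilde I$ then by Proposition~\ref{prop:luckytails}(ii), $G$ is the $\PP{\tilde J}$-marked basis of $J$; the involutive criterion yields $x_ig\crid{G^\ast}0$ for all $g\in G$ and non-multiplicative $x_i$, and $\mathcal B_{\tilde I}\subseteq\tilde I\subseteq J$ gives $x^\gamma\crid{G^\ast}0$ for every $x^\gamma\in\mathcal B_{\tilde I}\setminus\PP{\tilde J}$. Applying Proposition~\ref{prop:luckytails}(i) in reverse to each of these three families of reductions recovers (i), (ii), (iii).

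The main obstacle is the cone-transitivity observation in the middle paragraph, which is what prevents a reduced remainder from hiding $\tilde I$-contributions outside of the $F'$-cones; once this is in place, the remaining content is a dictionary between $\rid{G^\ast}$ and $\rid{H^\ast}$ provided by Proposition~\ref{prop:luckytails} together with the well-known involutive characterisation of marked bases.
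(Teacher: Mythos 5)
Your overall strategy mirrors the paper's: specialise Proposition~\ref{prop:luckytails} to $F=\PP{\tilde I}$, so that $F'=\RR:=\PP{\tilde I}\cap\PP{\tilde J}$, reduce the relative-basis condition to \lq\lq $H\cup\RR$ is the $\PP{\tilde J}$-marked basis of $J=(H\cup\mathcal B_{\tilde I})$\rq\rq, then invoke the involutive characterisation of marked bases from \cite[Theorem~5.13]{CMR2015} to get items (i)--(ii), and item (iii) to get $\tilde I\subseteq J$. The one place you diverge is in proving the key observation that a remainder $r$ which is $\rid{H^\ast}$-reduced and lies in $\tilde I$ must belong to $\langle\RR^\ast\rangle_A$. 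You locate each term of $r$ in its $\PP{\tilde I}$-cone, split into cases, and invoke cone transitivity; the paper instead uses the disjoint $\PP{\tilde J}$-cone decomposition of $\tilde J$ directly (via Remark~\ref{rem:subred}): each term of $r$ lies in $\tilde I\subseteq\tilde J$, hence in a unique cone $\mathcal C_{\mathcal P}(x^\beta)$ with $x^\beta\in\PP{\tilde J}$, and being $\rid{H^\ast}$-reduced rules out $x^\beta\in\Ht(H)=\PP{\tilde J}\setminus\PP{\tilde I}$, so $x^\beta\in\RR$. Your detour is valid but more elaborate than needed, and it contains a sign slip: from $x^\alpha\in\mathcal C_{\mathcal P}(x^\beta)$ one gets $\min(x^\alpha)\leq\min(x^\beta)$, hence $\mult(x^\alpha)\subseteq\mult(x^\beta)$, not $\min(x^\alpha)\geq\min(x^\beta)$ as you wrote. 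The inclusion you then use is the correct one, so this is only a typo and does not affect the argument.
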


\begin{proof} 
Denote by $\RR$ the set $\PP{\tilde I}\cap\PP{\tilde J}$. Observe that if $p\crid{H^\ast} r\in \tilde I$, then $r$ belongs to $\langle \RR^\ast\rangle\subseteq\tilde I$, as pointed out in Remark \ref{rem:subred}. 
Thanks to Proposition \ref{prop:luckytails}, $H$ is a $\PP{\tilde J}$-marked basis relative to $\tilde I$ if and only if $H\cup \RR$ is the $\PP{\tilde J}$-marked basis of $J=(H\cup \mathcal B_{\tilde I})$. So now it is sufficient to apply \cite[Theorem 5.13]{CMR2015} to obtain items (i) and (ii) and observe that item (iii) guarantees that $\tilde I$ is contained in $J$.
\end{proof}

\section{Relative marked functor}
\label{sec:relative marked functor}

We continue to use the same notation of Section \ref{sec:relative marked bases}, in particular $\tilde J\supseteq \tilde I$ are quasi-stable ideals in $R$ and the ideal $I\subseteq R_A$ is generated by a $\mathcal P_{\tilde I}$-marked basis $F$.

\begin{definition}
The \emph{marked functor on $\tilde J$ relative to $I$}, or, when $\tilde J$ and $I$ are well-understood, simply the \emph{relative marked functor}, is the functor
$$\MFFunctor{I, \tilde J}: \underline{\text{Noeth}\ \kk\!\!-\!\!\text{Alg}} \longrightarrow \underline{\text{Sets}}$$ 
such that
$$\MFFunctor{I, \tilde J}(A):=\{ (H\cup F) \subseteq R_A \ \vert \ H\subseteq R_A \text{ is a } \PP{\tilde J}\text{-marked basis relative to } I\}$$
and, if $\phi:A\rightarrow B$ is a morphism of Noetherian $\kk$-algebras (with $\phi(1_\kk)=1_\kk\in B)$, then the map $\MFFunctor{I, \tilde J}(\phi)$ associates to every $H\in\MFFunctor{I, \tilde J}(A)$ the $\PP{\tilde J}$-marked basis $H\otimes_A B\in \MFFunctor{I, \tilde J}(B)\subset R_B$  relative to $I$.
\end{definition}

As the reader might expect, $\MFFunctor{I, \tilde J}$ is strictly related to $\MFFunctor{\tilde J}$ and to $\HScheme{X}{p(z)}$, with $X=\Proj(R/I)$, when the hypotheses of Proposition \ref{prop:luckytails} hold.

\begin{theorem} 
\label{th:open subfunctor}
With the notation above and under the hypotheses of Proposition~\ref{prop:luckytails}, the following statements hold:
\begin{enumerate}[label=(\roman*)]
\item \label{it1:intersectionFunctors} the relative marked functor $\MFFunctor{I, \tilde J}$ is a closed subfunctor of $\MFFunctor{\tilde J}$;
\item \label{it2:intersectionFunctors} if the ideals $\tilde I$ and $\tilde J$ are both saturated, then for every integer $t\geq \rho_{\tilde J}-1$ the relative marked functor $\MFFunctor{I_{\geq t}, \tilde J_{\geq t}}$  is an open subfunctor of $\HFunctor{X}{p(z)}$ and  it is represented by $\MFScheme{\tilde J_{\geq t}} \cap \HScheme{X}{p(z)}$.
\end{enumerate}
\end{theorem}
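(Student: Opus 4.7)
The plan is to handle the two parts separately, using the reduction machinery of Section \ref{sec:relative marked bases} together with the explicit construction of Theorem \ref{thm:constrRelScheme1}.

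For part \ref{it1:intersectionFunctors}, I would first observe that any ideal $(H\cup F)\in\MFFunctor{I,\tilde J}(A)$ coincides with $(G)$, where $G$ is the $\PP{\tilde J}$-marked basis furnished by Theorem \ref{thm:linkToGenMethod}. This yields an inclusion of functors $\MFFunctor{I,\tilde J}\hookrightarrow\MFFunctor{\tilde J}$, injective at every $A$ because $H$ is recoverable from $G$ as $\{g\in G : \Ht(g)\in\PP{\tilde J}\setminus\PP{\tilde I}\}$. The image is characterised by the condition $(G)\supseteq I$, equivalently $\Nf_G(f)=0$ for every $f\in F$, where the normal form is computed by the confluent reduction of Definition \ref{relazione omogenea}. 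Applying this criterion to the universal $\PP{\tilde J}$-marked set $\mathscr G$ over $\kk[C]/\mathscr U$, the coefficients of the normal forms $\Nf_{\mathscr G}(f)$ generate an ideal $\mathscr W\subseteq\kk[C]/\mathscr U$, and the vanishing of $\mathscr W$ cuts out the closed subscheme of $\MFScheme{\tilde J}$ that represents $\MFFunctor{I,\tilde J}$.

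For part \ref{it2:intersectionFunctors}, I would combine Proposition \ref{prop:sottoschema} and Theorem \ref{thm:constrRelScheme1}: the former gives openness of $\MFScheme{\tilde J_{\geq t}}\cap\HScheme{X}{p(z)}$ inside $\HScheme{X}{p(z)}$, while the latter identifies it with $\Spec(\kk[C]/(\mathscr U+\mathscr V_Z))$. It then remains to show that this affine scheme also represents $\MFFunctor{I_{\geq t},\tilde J_{\geq t}}$. Given an $A$-point of $\Spec(\kk[C]/(\mathscr U+\mathscr V_Z))$, the vanishing of $\mathscr U$ produces a $\PP{\tilde J_{\geq t}}$-marked basis $G$, and the vanishing of $\mathscr V_Z$ forces $(G)\supseteq I_{\geq t}$ via Lemma \ref{lem:satEquiv}. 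Splitting $G=H\cup F'$ by whether head terms lie in $\PP{\tilde J_{\geq t}}\setminus\PP{\tilde I_{\geq t}}$ or in $\PP{\tilde I_{\geq t}}\cap\PP{\tilde J_{\geq t}}$, one has $(G)=(H\cup F_{\geq t})$; then Lemma \ref{lemma:lemma1}\ref{it2:lemma1} applied to this ideal, combined with the marked basis decomposition $R_A=(G)\oplus\langle\NN(\tilde J_{\geq t})\rangle_A$, recovers \eqref{eq:decomposition} and exhibits $H$ as an element of $\MFFunctor{I_{\geq t},\tilde J_{\geq t}}(A)$. The reverse assignment $H\mapsto G$ is provided by Theorem \ref{thm:linkToGenMethod}, and both constructions are visibly functorial in $A$.

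The main obstacle I anticipate is reconciling the two types of objects parameterised: $\MFFunctor{I_{\geq t},\tilde J_{\geq t}}$ classifies homogeneous ideals of $R_A$ containing $I_{\geq t}$, whereas $\HFunctor{X}{p(z)}$ classifies flat families of closed subschemes of $X$ with Hilbert polynomial $p(z)$. One must verify that sending $J$ to $\Proj(R_A/J)\hookrightarrow X\times\Spec(A)$ defines a natural transformation, that flatness and the correct fibrewise Hilbert polynomial follow automatically from the decomposition $R_A=J\oplus\langle\NN(\tilde J_{\geq t})\rangle_A$ (which exhibits $R_A/J$ as a free $A$-module in each degree), and that the openness of the inclusion $\MFScheme{\tilde J_{\geq t}}\cap\HScheme{X}{p(z)}\hookrightarrow\HScheme{X}{p(z)}$ from Proposition \ref{prop:sottoschema} transfers to the level of functors via Yoneda. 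The confluence and Noetherianity of $\rid{G^\ast}$ guaranteed by Lemma \ref{lem:subred} ensure that all defining equations behave well under base change, which is what ultimately makes the identification go through.
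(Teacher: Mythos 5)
Your proposal is correct and follows essentially the same route as the paper: part \ref{it1:intersectionFunctors} rests on the bijection furnished by Theorem \ref{thm:linkToGenMethod} together with the observation that ``$(G)\supseteq I$'' is a closed condition on the coefficients of the universal marked set, and part \ref{it2:intersectionFunctors} combines Proposition \ref{prop:sottoschema}, Lemma \ref{lem:satEquiv}, and the identification of $\MFFunctor{I_{\geq t},\tilde J_{\geq t}}$ with the intersection functor $\MFFunctor{\tilde J_{\geq t}}\cap\HFunctor{X}{p(z)}$. The only difference is cosmetic: you detour through the explicit affine scheme of Theorem \ref{thm:constrRelScheme1} and then re-match it against $\MFFunctor{I_{\geq t},\tilde J_{\geq t}}$, whereas the paper stays at the functor level and invokes Proposition \ref{prop:sottoschema} directly; your closing paragraph about flatness and the passage $J\mapsto\Proj(R_A/J)$ corresponds to the paper's citation of \cite{BCRAffine}.
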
 

\begin{proof}
For what concerns item \ref{it1:intersectionFunctors}, thanks to Theorem \ref{thm:linkToGenMethod}, for every Noetherian $\kk$-algebra $A$ there is a bijection between the set $\MFFunctor{I, \tilde J}(A)$ and the set 
$$\MFFunctor{\tilde J}(A)\cap \{J\subseteq R_A : J \text{ is a homogeneous ideal containing } I\}.$$
As we already showed in Section \ref{sec:MFandHilb}, the condition \lq\lq $J$ contains $I$" can be imposed on the ideals in $\MFFunctor{\tilde J}(A)$ by further closed conditions on the polynomials generating the ideal that defines the scheme which represents the functor $\MFFunctor{\tilde J}$. So, we obtain item \ref{it1:intersectionFunctors}.

Thanks to \cite[Theorem 3.5 and Corollary 3.7]{BCRAffine}, item \ref{it1:intersectionFunctors}, and Lemma \ref{lem:satEquiv}(ii), for every integer $t$, $\MFFunctor{I_{\geq t}, \tilde J_{\geq t}}$ is a closed subfunctor of $\MFFunctor{\tilde J_{\geq t}}$. Moreover, if $t\geq \rho_{\tilde J}-1$, we have $\MFFunctor{I_{\geq t}, \tilde J_{\geq t}}(A)=\MFFunctor{\tilde J_{\geq t}}(A) \cap \HFunctor{X}{p(z)}(A)$, and item~\ref{it2:intersectionFunctors} holds thanks to Proposition \ref{prop:sottoschema}.
\end{proof}

\subsection{Case $I=\tilde I$.} In the particular case $I=\tilde I$, we now give a construction of the scheme representing $\MFFunctor{\tilde I, \tilde J}$ which is alternative to the construction that is described in Section \ref{sec:MFandHilb} for $\MFScheme{\tilde J_{\geq t}} \cap \HScheme{X}{p(z)}$. We use the computational method that arises from Corollary~\ref{cor:ridForTildeI} in order to  characterize  relative marked bases.

Let $C'$ denote the finite set of variables
$$\bigl\{C_{\beta\eta}\mid x^\beta \in \PP{\tilde J}\setminus \PP{\tilde I}, x^\eta \in \NN(\tilde J ), \deg(x^\eta)=\deg(x^\beta)\bigr\}$$ 
and consider the $\kk$-algebra $\kk[C']$. Then, we construct the set $\mathscr  H\subset R_{\kk[C']}$ consisting of the following marked polynomials
\begin{equation}\label{eq:polinomi h}
h_\beta=x^\beta-\sum_{x^{\eta}\in \NN(\tilde J)_{\vert\beta\vert}}C_{\beta\eta }x^\eta
\end{equation}
with $x^{\beta}\in\PP{\tilde J}\setminus \mathcal P_{\tilde I}$.
Moreover, we consider 
$$\mathscr H^\ast :=\{x^\delta h_\beta \ \vert \ h_\beta \in \mathscr H, x^\delta \in \mult(h_\beta)\}.$$
We highlight that the set $C'$ can be identified to a subset of the set $C$ given in Section \ref{sec:MFandHilb}, and up to this identification we can consider $\mathscr H$ as a subset of  $\mathscr G$.

Then, we explicitly compute the following polynomials in  $R_{\kk[C']}$ by $\rid{\mathscr H^\ast}$:
\begin{itemize}
\item $\forall h_\beta \in \mathscr H$, $\forall x_i>\min(\Ht(h_\beta))$, let $r_{\beta,i}$ be such that $x_i h_\beta\crid{\mathscr H^\ast }r_{\beta,i}$;
\item  $\forall x^\alpha \in \RR$, $\forall x_i>\min(x^\alpha)$, let $r_{\alpha,i}$ be such that $x_ix^\alpha\crid{\mathscr H^\ast }r_{\alpha,i}$;
\item $\forall x^\gamma\in \mathcal B_{\tilde I}\setminus \PP{\tilde J}$, let $r_{\gamma}$ be such that $f\crid{\mathscr H^\ast}r_\gamma$.
\end{itemize}
For every $ h_\beta \in \mathscr H$, and for every $x_i>\min(\Ht(h_\beta))$, we collect the coefficients in $\kk[C']$ of the terms in $\supp(r_{\beta,i})$ not belonging to $\tilde I$, and the same for all the polynomials $r_{\alpha,i}$ and $r_\gamma$. Let $\mathscr R\subset \kk[C']$ be the ideal generated by these coefficients.
 
\begin{theorem}\label{th:representationTildeI}
The functor $\MFFunctor{\tilde I, \tilde J}$ is the functor of points of $\Spec(\kk[C']/\mathscr R)$, which we denote by $\MFScheme{\tilde I, \tilde J}$.
\end{theorem}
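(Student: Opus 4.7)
The plan is to show representability via the standard Yoneda-type argument, adapted to the relative setting: every $\PP{\tilde J}$-marked set relative to $\tilde I$ in $R_A$ is specified by a $\kk$-algebra morphism $\varphi : \kk[C'] \to A$ that sends $C_{\beta\eta}$ to the chosen coefficient $c_{\beta\eta}\in A$, and the task is to show that the relative marked basis condition on $\varphi(\mathscr H)$ is equivalent to $\ker(\varphi)\supseteq \mathscr R$.

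First I would set up the bijection. Extending $\varphi$ to $R_{\kk[C']} \to R_A$ in the natural way, the image $\varphi(\mathscr H)$ is a $\PP{\tilde J}$-marked set relative to $\tilde I$, and every such marked set in $R_A$ arises uniquely in this manner. Next, I would observe that the reduction relation commutes with $\varphi$: if $p \crid{\mathscr H^\ast} r$ in $R_{\kk[C']}$, then $\varphi(p) \crid{\varphi(\mathscr H)^\ast} \varphi(r)$ in $R_A$. This is because each elementary reduction step $p'= p - \lambda x^\eta h_\beta$ is preserved under the $\kk$-algebra morphism, and by Lemma~\ref{lem:subred} the reduction by $\mathscr H^\ast$ is Noetherian and confluent, so the normal form is well defined.

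The heart of the argument is then Corollary~\ref{cor:ridForTildeI}. It states that $H := \varphi(\mathscr H)$ is a $\PP{\tilde J}$-marked basis relative to $\tilde I$ if and only if the three families of reductions $x_i h_\beta$, $x_i x^\alpha$ (with $x^\alpha \in \mathcal{T}_{\tilde I, \tilde J}$), and $x^\gamma$ (with $x^\gamma \in \mathcal{B}_{\tilde I}\setminus \PP{\tilde J}$) produce polynomials lying in $\tilde I$. By the commutativity of reduction with $\varphi$, the reduced forms are precisely $\varphi(r_{\beta,i})$, $\varphi(r_{\alpha,i})$, $\varphi(r_\gamma)$. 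Writing each $r$ as a $\kk[C']$-linear combination of terms, split according to whether the term lies in $\tilde I$ or not, the condition that the image lies in $\tilde I$ is equivalent to the vanishing under $\varphi$ of all $x$-coefficients of terms of $\NN(\tilde I)$ appearing in these $r$'s. By construction, $\mathscr R$ is generated exactly by these $x$-coefficients, so the condition is $\ker(\varphi)\supseteq \mathscr R$, i.e.~$\varphi$ factors through $\kk[C']/\mathscr R$.

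From this equivalence I can conclude by Yoneda: the assignment $\varphi \mapsto \varphi(\mathscr H)$ induces a natural bijection between $\mathrm{Hom}_{\kk\text{-Alg}}(\kk[C']/\mathscr R, A)$ and $\MFFunctor{\tilde I, \tilde J}(A)$, and naturality in $A$ follows because extending $\mathscr H$ by any morphism of $\kk$-algebras $A\to B$ amounts to post-composition of $\varphi$. Hence $\MFFunctor{\tilde I, \tilde J}$ is represented by $\Spec(\kk[C']/\mathscr R)$. The main obstacle I anticipate is purely bookkeeping, namely making sure that the three reduction conditions from Corollary~\ref{cor:ridForTildeI} genuinely translate into vanishing of precisely the generators of $\mathscr R$ (and not of some larger or smaller ideal); here one must be careful that a term appearing in a reduced form $r$ which lies in $\tilde I$ need not be killed, whereas every term in $\NN(\tilde I)$ must be, and this is exactly how $\mathscr R$ was defined.
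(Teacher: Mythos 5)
Your proposal is correct and follows essentially the same route as the paper: both set up the evaluation morphism $\kk[C']\to A$, both reduce the representability claim to the criterion of Corollary~\ref{cor:ridForTildeI}, and both conclude via Yoneda. You spell out two points that the paper leaves implicit — the compatibility of the reduction relation $\crid{\mathscr H^\ast}$ with the base-change morphism $\varphi$, and the explicit naturality check — and these are indeed the right places to pause; the rest matches the paper's argument.
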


\begin{proof}
Let $H\subseteq R_A$ be a $\mathcal P_{\tilde J}$-marked basis relative to $\tilde I$ and denote by $\phi_H$ the evaluation morphism $\phi_H: \kk[C'] \to A$ that associates to every variable in $C'$ the corresponding coefficient in the polynomials of $H$. 
It is sufficient to observe that $H$ is a $\mathcal P_{\tilde J}$-marked basis relative to $\tilde I$ if and only if $\phi_H$ factors through $\kk[C']/\mathscr R$, or in other words if and only if the following diagram commutes
\[
\xymatrix{
\kk[C']  \ar[rr]^{\phi_H} \ar[dr]& &A\\
& \kk[C']/\mathscr R\ar[ru]}\,.
\]
Equivalently, $H$ is a $\mathcal P_{\tilde J}$-marked basis relative to $\tilde I$ if and only if $\mathscr R$ is contained in $\mathrm{ker}(\phi_H)$, which is true thanks to Corollary~\ref{cor:ridForTildeI}.
\end{proof}

The scheme $\MFScheme{\tilde I, \tilde J}$ which has been introduced in the statement of Theorem \ref{th:representationTildeI} is called a {\em relative marked scheme}. 

\begin{remark}\label{rem:number parameters}
The scheme $\Spec(\kk[C']/\mathscr R)$ is computationally more advantageous compared with $\Spec(\kk[C]/(\mathscr U+\mathscr V_F))$ considered in Theorem \ref{thm:constrRelScheme1}. Indeed, if $\PP{\tilde I}\cap \PP{\tilde J}\neq \emptyset$, then $\vert C'\vert <\vert C\vert$ and the reduction $\rid{\mathscr H^\ast}$ involves the relative marked set $\mathscr H$, which contains less polynomials than  $\mathscr G$. Actually, in principle we perform less reduction steps using $\rid{\mathscr H^\ast}$.
\end{remark}

If the ideals $\tilde I$ and $\tilde J$ are both saturated and we consider $\MFFunctor{\tilde I_{\geq t}, \tilde J_{\geq t}}$ for some $t$, we give a further different presentation of the scheme representing it. Take the following polynomials in $\kk[C']$:
\begin{equation}\label{eq:fewForSat}
\forall x^\gamma\in \PP{\tilde I}\setminus \PP{\tilde J}, \text{ let $r_{\gamma}$ be such that }\\  x_0^{\max\{0, t-\deg(x^\gamma)\}}x^\gamma \crid{\mathscr H^\ast} r_\gamma\tag{$\star$}
\end{equation}
Observe that if $t$ is strictly bigger than the initial degree of $\tilde I$, then $\vert\PP{\tilde I}\setminus \PP{\tilde J}\vert$ is strictly smaller than $\vert \PP{\tilde I_{\geq t}}\cap \PP{ \tilde J_{\geq t}}\vert$. 
 
Let $\mathscr R'\subset \kk[C']$ be the ideal generated by the coefficients in $\kk[C']$ of the terms not belonging to $\tilde I$ of the polynomials $r_{\beta,i}$, $r_{\alpha,i}$ considered for $\mathscr R$, and by the coefficients in $\kk[C']$ of the terms not belonging to $\tilde I$ of the polynomials $r_\gamma$ in \eqref{eq:fewForSat}.
 
\begin{theorem}
The functor  $\MFFunctor{\tilde I_{\geq t}, \tilde J_{\geq t}}$ is the functor of points of $\Spec(\kk[C']/\mathscr R')$.
\end{theorem}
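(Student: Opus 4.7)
The plan is to adapt the proof of Theorem \ref{th:representationTildeI} to the saturated setting, establishing a bijection between $\kk$-algebra morphisms $\phi\colon \kk[C'] \to A$ factoring through $\kk[C']/\mathscr R'$ and elements of $\MFFunctor{\tilde I_{\geq t}, \tilde J_{\geq t}}(A)$. Given such a $\phi$, its image $\phi(\mathscr H) \subseteq R_A$ is a $\PP{\tilde J}$-marked set relative to $\tilde I$, and I would show that $\phi(\mathscr H)$, suitably completed by $x_0$-multiples into a $\PP{\tilde J_{\geq t}}$-marked set, is a marked basis relative to $\tilde I_{\geq t}$ precisely when $\mathscr R' \subseteq \ker(\phi)$.

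The key tool is Corollary \ref{cor:ridForTildeI} applied to $\tilde I_{\geq t}$ and $\tilde J_{\geq t}$. Its conditions (i) and (ii) translate directly into the reduction polynomials $r_{\beta,i}$ and $r_{\alpha,i}$ already appearing in the construction of $\mathscr R$, and these contribute the first two families of generators of $\mathscr R'$. For condition (iii), which is equivalent to the containment $\tilde I_{\geq t} \subseteq J$ (where $J$ is the ideal generated by the completed marked basis), I would invoke Lemma \ref{lem:satEquiv} applied to the generating set $\PP{\tilde I}$ of $\tilde I$: the containment is equivalent to $J$ containing $\{x_0^{\max\{0, t-\deg(x^\gamma)\}} x^\gamma : x^\gamma \in \PP{\tilde I}\}$. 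For $x^\gamma \in \PP{\tilde I}\cap \PP{\tilde J}$ this is automatic, since the relevant monomial has degree at least $t$ and lies in $\tilde J_{\geq t}\subseteq J$; only $x^\gamma \in \PP{\tilde I}\setminus \PP{\tilde J}$ yields non-trivial conditions, and these are exactly the reductions of \eqref{eq:fewForSat} used to define the third family of generators of $\mathscr R'$.

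The main obstacle will be the careful identification in the first step, namely explaining why a $\PP{\tilde J_{\geq t}}$-marked set relative to $\tilde I_{\geq t}$ in $R_A$ is uniquely determined by the coefficients indexed by $C'$, which parametrizes only $\PP{\tilde J}\setminus \PP{\tilde I}$ rather than the potentially larger set $\PP{\tilde J_{\geq t}}\setminus \PP{\tilde I_{\geq t}}$. This reduction relies on the structural compatibility of Pommaret cones under the passage $\tilde J \leadsto \tilde J_{\geq t}$ and on the hypothesis $t \geq \rho_{\tilde J}-1$, which ensures that the extra generators arising in $\PP{\tilde J_{\geq t}}$ beyond $\PP{\tilde J}$ come from multiplication by suitable powers of $x_0$, so that their marked polynomials are forced by the essential coefficients parametrized by $C'$ together with the reduction relation $\rid{\mathscr H^\ast}$.
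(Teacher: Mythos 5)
Your overall plan matches the paper's: the authors prove this theorem by observing that the argument of Theorem \ref{th:representationTildeI} goes through once one brings in Lemma \ref{lem:satEquiv} to replace the membership test for $\tilde I_{\geq t}$ by the cheaper test over $\{x_0^{\max\{0,t-\deg(x^\gamma)\}}x^\gamma\}$. Your decomposition of the argument into (i)--(ii) for the quasi-stability of the completed marked set and (iii)/Lemma \ref{lem:satEquiv} for the containment of $\tilde I_{\geq t}$ is exactly this strategy, and your remark that only $x^\gamma\in\PP{\tilde I}\setminus\PP{\tilde J}$ yield nontrivial equations correctly accounts for the indexing set of \eqref{eq:fewForSat}.

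There is, however, one incorrect step in the justification that the conditions for $x^\gamma\in\PP{\tilde I}\cap\PP{\tilde J}$ are automatic: you write that the monomial $x_0^{\max\{0,t-\deg(x^\gamma)\}}x^\gamma$ ``lies in $\tilde J_{\geq t}\subseteq J$''. The inclusion $\tilde J_{\geq t}\subseteq J$ is false in general; $J$ is generated by a $\PP{\tilde J_{\geq t}}$-marked basis $G=H\cup\RR$, so its head terms are $\tilde J_{\geq t}$, but the monomial ideal $\tilde J_{\geq t}$ itself is not contained in $J$ (a head term $x^\beta=\Ht(h_\beta)$ is generally not in $J$ when $h_\beta$ has a nonzero tail). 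What is true, and what one should say, is the following: since $x_0\in\mult(x^\gamma)$ for every term $x^\gamma$, the term $\tau:=x_0^{\max\{0,t-\deg(x^\gamma)\}}x^\gamma$ belongs to $\PP{\tilde I_{\geq t}}$ and to $\PP{\tilde J_{\geq t}}$ whenever $x^\gamma\in\PP{\tilde I}\cap\PP{\tilde J}$ (it is precisely one of the new degree-$t$ Pommaret generators obtained by $x_0$-shifting when $\deg(x^\gamma)<t$, and simply $x^\gamma$ itself when $\deg(x^\gamma)\geq t$). Hence $\tau\in\RR=\PP{\tilde I_{\geq t}}\cap\PP{\tilde J_{\geq t}}$, which is a monic monomial member of the completed marked basis of Proposition \ref{prop:luckytails}, so $\tau\in(H\cup\RR)=J$ trivially. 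This fixes the gap without changing your conclusion. Your final paragraph identifying the index-set issue for $C'$ versus the larger $\PP{\tilde J_{\geq t}}\setminus\PP{\tilde I_{\geq t}}$ flags a genuine ambiguity in the paper's notation; the intended reading, visible from Algorithm \ref{algorithm}, is that $\mathscr H$ and hence $C'$ are taken with respect to $\tilde J_{\geq t}$ and $\tilde I_{\geq t}$, which dissolves that obstacle.
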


\begin{proof}
The proof is analogous to that of Theorem \ref{th:representationTildeI} thanks to Lemma \ref{lem:satEquiv}.
\end{proof}

Algorithm \ref{algorithm} collects the instructions to compute the ideal $\mathscr R'$.

\begin{algorithm}[ht]
\caption{\label{algorithm} Algorithm for computing the defining ideal $\mathscr R'$ representing the relative marked functor $\MFFunctor{\tilde I_{\geq t}, \tilde J_{\geq t}}$}
\begin{algorithmic}[1]
\STATE MarkedFunctor($\tilde J,\tilde I,t$)
\REQUIRE saturated quasi-stable ideals $\tilde J\supseteq \tilde I$ and a non-negative integer $t$
\ENSURE generators of the ideal $\mathscr R'$ representing the relative functor $\MFFunctor{\tilde I_{\geq t}, \tilde J_{\geq t}}$
\STATE let $\mathscr H\subseteq \kk[C']$ be the set of the polynomials defined in \eqref{eq:polinomi h} with respect to the quasi-stable ideals $\tilde J_{\geq t}$ and $\tilde I_{\geq t}$
\STATE $\mathscr R':=(0)$
\FOR{$h_\beta \in \mathscr H$}
\FOR{$x_i>\min(\mathrm{Ht}(h_\beta))$}
\STATE compute $r_{\beta,i}$ such that $x_i h_\beta \crid{\mathscr H^\ast } r_{\beta,i}$
\STATE $\mathscr R':=\mathscr R'+(\text{coefficients in $r_{\beta,i}$ of the terms not belonging to } \tilde I)$
\ENDFOR
\ENDFOR
\FOR{$x^\alpha \in \mathcal P_{\tilde J_{\geq t}}\cap \mathcal P_{\tilde I_{\geq t}}$}
\FOR{$x_i>\min(x^\alpha)$}
\STATE compute $r_{\alpha,i}$ such that $x_i x^\alpha \crid{\mathscr H^\ast } r_{\alpha,i}$
\STATE $\mathscr R':=\mathscr R'+(\text{coefficients in $r_{\alpha,i}$ of the terms not belonging to } \tilde I)$
\ENDFOR
\ENDFOR
\FOR{$x^\gamma \in \mathcal B_{\tilde I}\setminus \mathcal P_{\tilde J}$}
\STATE compute $r_{\gamma}$ such that $x_0^{\max\{0,t-\deg(x^\gamma)\}}x^\gamma \crid{\mathscr H^\ast } r_{\gamma}$
\STATE $\mathscr R':=\mathscr R'+(\text{coefficients in $r_{\gamma}$ of the terms not belonging to } \tilde I)$
\ENDFOR
\STATE return $\mathscr R'$
\end{algorithmic}
\end{algorithm}

In the following examples we take into account the fact that, thanks to Theorem~\ref{th:open subfunctor}\ref{it2:intersectionFunctors}, the relative marked scheme $\MFScheme{\tilde I_{\geq t}, \tilde J_{\geq t}}$ defined by the ideal $\mathscr R'$ that has just introduced is an open subscheme of $\HScheme{X}{p(z)}$. Hence, the Zariski tangent space to this open subscheme at one of its points is equal to the Zariski tangent space to $\HScheme{X}{p(z)}$ at the same point (see also \cite[Corollary 1.9]{Gore}). 

\begin{example}\label{ex:first example}
This is an example of scheme $\MFScheme{\tilde I_{\geq t}, \tilde J_{\geq t}}$ which is neither irreducible nor reduced.
Take the quasi-stable ideals $\tilde I=(x_3^2,x_2^5)\subset \tilde J=(x_3^2,x_3x_2,x_3x_1^2,x_2^5) \subset R:=\kk[x_0,\dots,x_3]$, with $\mathcal P_{\tilde J}=\{x_3^2,x_3x_2,x_3x_1^2,x_2^5\}$ and $\mathcal P_{\tilde I}=\{x_3^2,x_3x_2^5,x_2^5\}$.
Following Algorithm \ref{algorithm} and using CoCoA  \cite{CoCoA}, we compute the ideal $\mathscr R'$ defining the relative marked scheme $\MFScheme{\tilde I_{\geq t}, \tilde J_{\geq t}}$ for $t=\rho_{\tilde J}-1=2$. In this case we have $\tilde J_{\geq t}=\tilde J$ and $\tilde I_{\geq t}=\tilde I$. 
The set $\mathscr H$ is made of the following polynomials in the ring $\mathbb Q[c_1,\dots,c_{20}][x_0,\dots,x_3]$:

$h_1= c_1x_0^2 + c_2x_0x_1 + c_3x_1^2 + c_4x_0x_2 + c_5x_1x_2 + c_6x_2^2 + c_7x_0x_3 + c_8x_1x_3 + x_2x_3,$

$h_2= c_9x_0^3 + c_{10}x_0^2x_1 + c_{11}x_0x_1^2 + c_{12}x_1^3 + c_{13}x_0^2x_2 + c_{14}x_0x_1x_2 + c_{15}x_1^2x_2 + c_{16}x_0x_2^2 +$ 

\hskip 9.5mm $c_{17}x_1x_2^2 + c_{18}x_2^3 + c_{19}x_0^2x_3 + c_{20}x_0x_1x_3 + x_1^2x_3$.

\vskip 1mm
\noindent By $\rid{\mathscr H^\ast }$ we reduce the polynomials $x_3 h_1$, $x_3 h_2$, $x_2 h_2$, $x_3x_2^5$ and then apply the reduction modulo $\tilde I$, obtaining the ideal $\mathscr R' \subseteq \mathbb Q[c_1,\dots,c_{20}]$.
The ring $\mathbb Q[c_1,\dots,c_{20}]/\mathscr R'$ has Krull dimension $2$, so $\MFScheme{\tilde I_{\geq t}, \tilde J_{\geq t}}$ has dimension $2$. Moreover, the Zariski tangent space to $\MFScheme{\tilde I_{\geq t}, \tilde J_{\geq t}}$ at $Y$ has dimension~$7$ and the point $Y$ defined by $\tilde J$ is singular in $\HScheme{X}{p(z)}$, where $p(z)=5z-3$ is the Hilbert polynomial of $R/\tilde J$ and $X=\mathrm{Proj}(R/\tilde I)$.

Using Macaulay 2  \cite{M2}  we compute the irreducible components of $\mathscr R'$, obtaining that the associated primes have both dimension $2$ and are
\[\mathscr P_0=\left(c_{18},\,c_{17},\,c_{16},\,c_{15},\,c_{14},\,c_{13},\,c_{12},\,c_{11},\,c_{10},\,c_{9},\,c_{8},\,c_{7},\,c_{6},\,c_{5},\,c_{4},\,c_{3},\,c_{2},\,c_{1}\right),\]
\begin{equation*}
\begin{split}
\mathscr P_1=(&c_{18},\,c_{17},\,c_{16},\,c_{15},\,c_{14},\,c_{13},\,c_{12},\,c_{11},\,c_{10},\,c_{9},\,c_{6},\,c_{5},\,c_{4},\,c_{3},\,c_{2},\,c_{1}\,c_{20}^{2}-4c_{19},\\
&c_{8}c_{20}-2\,c_{7},\,2\,c_{8}c_{19}-c_{7}c_{20}).
\end{split}
\end{equation*}
The defining ideals of both the two irreducible components are not prime and contain the point $Y$ as a singular point, because the Zariski tangent spaces at $Y$ to these components have dimensions $7$, too. Some ancillary material related to this example is available at \url{http://www.dma.unina.it/~cioffi/RedirectRelative.html}.

If we compute the generators for the ideal $\mathscr U+\mathscr V_Z$ of Theorem~\ref{thm:constrRelScheme1}, then we have to deal with $50$ parameters instead of $20$.
\end{example}

\begin{example}\label{ex:second example} 
This is an example of scheme $\MFScheme{\tilde I_{\geq t}, \tilde J_{\geq t}}$ which is irreducible but not reduced. Assume $\mathrm{char}(\kk)\neq 2$. Given an integer $p> 2$, take the quasi-stable ideals $\tilde I=(x_n^2,x_{n-1}^p)\subset \tilde J=(x_n,x_{n-1}^p)\subseteq R:=\kk[x_0,\dots,x_n]$, with $\mathcal P_{\tilde J}=\{x_n,x_{n-1}^p\}$ and $\mathcal P_{\tilde I}=\{x_n^2,x_nx_{n-1}^p,x_{n-1}^p\}$. The ideal $\tilde J$ defines the only point $Y$ of the Hilbert scheme $\HScheme{X}{p(z)}$, where $p(z)$ is the Hilbert polynomial of $R/\tilde J$ and $X=\mathrm{Proj}(R/\tilde I)$ (see \cite{G}, for instance).
Following Algorithm \ref{algorithm}, by hand we compute the ideal $\mathscr R'$ defining $\MFScheme{\tilde I_{\geq t}, \tilde J_{\geq t}}$ for $t=\rho_{\tilde J}-1=0$, hence $\tilde J_{\geq t}=\tilde J$ and $\tilde I_{\geq t}=\tilde I$. 
The set $\mathscr H$ is made only of the polynomial 
$h=c_1x_0 + c_2x_1 + c_3x_2 + \dots +c_{n}x_{n-1} + x_n$. 
By $\rid{\mathscr H^\ast }$ we reduce the terms $x_nx_{n-1}^p$ and $x_n^2$ and then apply the reduction modulo $\tilde I$, obtaining the ideal $\mathscr R'$:
$$\mathscr R'=(c_n,\dots,c_2,c_1)^2.$$
The affine scheme $\mathrm{Spec}(\kk[c_1,c_2,\dots,c_n]/\mathscr R')$ is a zero-dimensional scheme supported over the origin and with Zariski tangent space of dimension $n$ at $Y$. By definition, the multiplicity in $\HScheme{X}{p(z)}$ of the fat point $Y$ is $n+1$.

If we compute the generators for the ideal $\mathscr U+\mathscr V_Z$ of Theorem~\ref{thm:constrRelScheme1}, then we have to deal with $\binom{n-1+p}{p}+n-1$ parameters instead of $n$. 
\end{example}

\section{An open cover of a Hilbert scheme over a Cohen-Macaulay  quotient ring on a quasi-stable ideal}
\label{sec:open cover}

Let $\tilde I$ be a saturated quasi-stable ideal, $S:=R/\tilde I$, and consider  the projective scheme $X=\mathrm{Proj}(\QR)$. Hence we can consider the Hilbert scheme $\HScheme{X}{p(z)}$, for an admissible Hilbert polynomial $p(z)$.

When we consider the image in $\QR$ of an element $f$ of $R$ we mean its image $[f]$ by the projection $\pi: R \to R/\tilde I$, that is its residue class modulo $\tilde I$. 

Following \cite{MP}, a term $\tau$ of $R$ is said {\em $\tilde I$-free} if its residue class $[\tau]$ is non-null, i.e.~$\tau$ does not belong to $\tilde I$. A term of $\QR$ is the image in $\QR$ of an $\tilde I$-free term of $R$. If $W$ is any set of terms in $\QR$, by abuse of notation we will use the symbol $W$ to also denote the set of terms in $\pi^{-1}(W)$ and vice versa.

The ring $\QR$ inherits from $R$ a grading for which  a term $[t]\in \QR$ has degree $q\geq 0$ if and only if $t$ has the degree $q$ in $R$.

An ideal $U\subseteq \QR$ is {\em monomial} if it is the image in $\QR$ of a monomial ideal of $R$. Every monomial ideal $U$ of $\QR$ has a unique minimal generating set $\mathcal B_U$ made of terms of~$\QR$. A monomial ideal $U$ of $\QR$ will be said {\em quasi-stable in $\QR$} if the ideal $(\mathcal B_{U}\cup \mathcal B_{\tilde{I}})$ is quasi-stable in $R$, that is $U$ is the image of a quasi-stable ideal of $R$.

\begin{definition}\label{def:U-marked set}
Let $U$ be a quasi-stable ideal in $\QR$ and let $\tilde J=(\mathcal B_U\cup \mathcal B_{\tilde I})$. The image in $\QR$ of a $\mathcal P_{\tilde J}$-marked set relative to $\tilde I$ is called a {\em $U$-marked set}. The image of a $\mathcal P_{\tilde J}$-marked basis relative to $\tilde I$ is called a {\em $U$-marked basis}.
\end{definition}



From now, we also assume that the ring $\QR:=R/\tilde I$ is Cohen-Macaulay. 

\begin{proposition}[{\cite[Theorem 3.20]{Seiler2009II}, \cite[Theorem 5.2.9]{Seiler:InvolutionBook}}]\label{prop:CohenMacaulayQuasiStableIdeals}
Let $\tilde I$ be a quasi-stable ideal in $R$ and let $\mathcal{P}_{\tilde I}$ be its Pommaret basis. Then $\QR=R/ \tilde I$ is Cohen-Macaulay if and only if, for the integer $m=\min\{\min(x^{\alpha})\mid x^\alpha \in \mathcal{P}_{\tilde I} \}$, there is a pure variable power $x_m^{a_m}$ in $\tilde{I}$.
\end{proposition}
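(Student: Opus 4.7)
My plan is to compute $\operatorname{depth}(R/\tilde I)$ and $\dim(R/\tilde I)$ separately in terms of the Pommaret basis $\PP{\tilde I}$, and observe that Cohen-Macaulayness (that is, $\operatorname{depth} = \dim$) is equivalent to both invariants equalling $m$, which in turn is equivalent to the pure-power condition. By the definition of $m$, every $x^\alpha \in \PP{\tilde I}$ is divisible only by variables $x_i$ with $i \geq m$, so $\mathcal{B}_{\tilde I} \subset \kk[x_m,\dots,x_n]$. Writing $I' := \tilde I \cap \kk[x_m,\dots,x_n]$, one has $\tilde I = I' R$ and therefore
\[
R/\tilde I \;\cong\; \kk[x_0,\dots,x_{m-1}] \otimes_\kk S, \qquad S := \kk[x_m,\dots,x_n]/I',
\]
giving $\operatorname{depth}(R/\tilde I) = m + \operatorname{depth}(S)$ and $\dim(R/\tilde I) = m + \dim(S)$. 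Since the prime $(x_m,\dots,x_n)$ always contains $\tilde I$, one also has $\dim(R/\tilde I) \geq m$ immediately.

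For the dimension side I would use quasi-stability. If $x_m^{a_m} \in \tilde I$, an induction on $\ell \in \{m,\dots,n\}$ produces pure powers $x_\ell^{a_\ell} \in \tilde I$: given $x_\ell^{a_\ell} \in \tilde I$, the quasi-stable condition applied with $j = \ell+1$ yields $x_{\ell+1}^{s_0} x_\ell^{a_\ell - 1} \in \tilde I$ for some $s_0 \geq 0$; iterating (each intermediate monomial still has minimum variable $x_\ell$) progressively lowers the exponent of $x_\ell$ until a pure power $x_{\ell+1}^{a_{\ell+1}}$ appears in $\tilde I$. Hence $\sqrt{\tilde I} = (x_m,\dots,x_n)$, so $\dim(S) = 0$ and $\dim(R/\tilde I) = m$. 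Conversely, if no pure power of $x_m$ lies in $\tilde I$, then no Pommaret basis element is a pure $x_m$-power, so every $x^\alpha \in \PP{\tilde I}$ is divisible by some $x_\ell$ with $\ell > m$; consequently the prime $(x_{m+1},\dots,x_n)$ contains $\tilde I$, has height $n - m$, and $\dim(R/\tilde I) \geq m + 1$.

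For the depth side I would invoke the minimal free resolution of a quasi-stable ideal induced by its Pommaret basis (a direct generalisation of the Eliahou-Kervaire resolution): the length of this resolution equals $\max_{x^\alpha \in \PP{\tilde I}} \bigl((n+1) - \min(x^\alpha)\bigr) = (n+1) - m$, so $\operatorname{pd}(R/\tilde I) = (n+1) - m$ and the Auslander-Buchsbaum formula gives $\operatorname{depth}(R/\tilde I) = m$. Combining everything, $R/\tilde I$ is Cohen-Macaulay iff $\dim(R/\tilde I) = m$, iff $x_m^{a_m} \in \tilde I$ for some $a_m > 0$. The main obstacle is the depth equality: a direct socle-element construction in $S$, starting from a Pommaret basis element $x^\alpha$ with $\min(x^\alpha) = x_m$ and setting $x^\beta := x^\alpha/x_m$, can fail to produce an element annihilated by every $x_\ell$ with $\ell > m$ (quasi-stability only yields $x_\ell^s x^\beta \in \tilde I$ for some $s \geq 1$, and $s$ may exceed $1$ for a naive choice of $x^\alpha$), so the cleanest path is through the explicit free resolution arising from the Pommaret basis.
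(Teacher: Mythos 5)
The paper itself does not prove this proposition; it is cited from Seiler's work, so I am evaluating your argument on its own terms. The overall skeleton — split off the free variables $x_0,\dots,x_{m-1}$ via the tensor decomposition $R/\tilde I \cong \kk[x_0,\dots,x_{m-1}]\otimes_\kk S$, compare $\dim$ and $\operatorname{depth}$, and characterise both in terms of the Pommaret basis — is a reasonable and essentially the standard route. Your dimension argument is correct, including the induction producing pure powers of each $x_\ell$ for $\ell\geq m$ and the observation that if no pure $x_m$-power lies in $\tilde I$ then $\tilde I\subseteq(x_{m+1},\dots,x_n)$.

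The gap is in the depth argument. You write that the free resolution induced by the Pommaret basis is \emph{minimal} and has length $(n+1)-m$, hence $\operatorname{pd}(R/\tilde I)=(n+1)-m$. The resolution built from a Pommaret basis (the Eliahou--Kervaire-type complex) is \emph{not} minimal unless $\tilde I$ is actually stable: whenever the Pommaret basis is strictly larger than the minimal monomial generating set, the differential acquires unit entries (e.g.\ for $\tilde I=(x_2^4,x_1x_2^2)$ with $\PP{\tilde I}=\{x_2^4,x_1x_2^3,x_1x_2^2\}$, the first syzygy $x_2\cdot(x_1x_2^2)=x_1x_2^3$ produces a constant in the matrix). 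Without minimality, the length of the complex only yields $\operatorname{pd}(R/\tilde I)\leq(n+1)-m$, i.e.\ $\operatorname{depth}(R/\tilde I)\geq m$ — which you already have for free from the tensor decomposition. What the non-Cohen-Macaulay direction actually requires is the \emph{upper} bound $\operatorname{depth}(R/\tilde I)\leq m$, and your resolution argument does not supply it. (The statement that the Pommaret complex, though non-minimal, still has length equal to the projective dimension is true, but it is itself a nontrivial theorem of Seiler proved essentially by computing the depth directly — so invoking it here is circular unless cited.)

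The socle construction that you considered and discarded in fact closes the gap with a slightly more careful choice. Among all $x^\alpha\in\PP{\tilde I}$ with $\min(x^\alpha)=x_m$, pick one of \emph{maximal degree}, and set $x^\beta=x^\alpha/x_m$. Then $x^\beta\notin\tilde I$ (otherwise $x^\alpha$ would lie in the cone of another basis element, contradicting disjointness), $x_m x^\beta=x^\alpha\in\tilde I$, and for $i>m$ the non-multiplicative product $x_i x^\alpha$ factors as $x^\delta x^\gamma$ with $x^\gamma\in\PP{\tilde I}$ and $x^\delta$ in the multiplicative variables of $x^\gamma$. If $x^\delta=1$ then $x_i x^\alpha\in\PP{\tilde I}$ has $\min=x_m$ and larger degree, contradicting maximality; so $x^\delta\neq 1$, forcing $x_m\mid x^\delta$ (since $\min(x^\delta x^\gamma)=x_m$), whence $x_i x^\beta=(x^\delta/x_m)x^\gamma\in\tilde I$. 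Thus $[x^\beta]$ is a nonzero socle element of $S$, giving $\operatorname{depth}(S)=0$ and $\operatorname{depth}(R/\tilde I)=m$ exactly. With this repair your proof is complete.
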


\begin{remark}\label{rem:StableCohenMacaulay}
Note that the criterion given in Proposition \ref{prop:CohenMacaulayQuasiStableIdeals} can be applied to a  quasi-stable ideal $\tilde I$ by looking at the minimal value of $\min(x^\alpha)$ for  $x^\alpha$ in~$\mathcal B_{\tilde I}$.
\end{remark}

Let $x_{k+1},\dots,x_n$ be the variables that divide some minimal generators of $\tilde I$.
In the following discussion, we will write $\T^{\prime}$ for the set of all terms in the polynomial ring $\kk[x_{k+1},\ldots,x_n]$ and $\T^{\prime\prime}$ for the set of all terms in the polynomial ring $\kk[x_0,\ldots,x_k]$.

We need to adapt \cite[Proposition 7.2 and Corollary~7.4]{Quot} to our current setting. We make the following observation as a first step.

\begin{lemma}\label{lem:FiniteModuleDecomp}
The Cohen-Macaulay quotient ring $\QR=R/\tilde{I}$ is a finitely generated graded free $\kk[x_0,\ldots,x_k]$-module
$$\QR=\bigoplus_{e=0}^d \left(\kk[x_0,\ldots,x_k](-e)\right)^{m_e},$$
where $d=\max\{\deg(t)\mid t\in\T^{\prime}\setminus \tilde{I}\}$ and, for each $0\leq e\leq d$, $m_e=|\{t\in \T^{\prime}\setminus \tilde{I}\mid \deg(t)=e\}|$.
\end{lemma}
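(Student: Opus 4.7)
The plan is to exhibit $\QR$ as a tensor product whose second factor is a finite-dimensional $\kk$-algebra and to read off the decomposition from this. First, by the hypothesis on the indexing of variables, no $x_i$ with $i\leq k$ divides any minimal generator of $\tilde I$, so every element of $\mathcal{B}_{\tilde I}$ lies in $\kk[x_{k+1},\ldots,x_n]$. Writing $\tilde I_0:=\tilde I\cap \kk[x_{k+1},\ldots,x_n]$, we thus have $\tilde I = \tilde I_0\cdot R$, and the natural factorization $R=\kk[x_0,\ldots,x_k]\otimes_{\kk}\kk[x_{k+1},\ldots,x_n]$ gives
\[
\QR \;\cong\; \kk[x_0,\ldots,x_k]\otimes_{\kk}\bigl(\kk[x_{k+1},\ldots,x_n]/\tilde I_0\bigr).
\]
A $\kk$-basis of the right factor is given by the standard monomials $\T^{\prime}\setminus \tilde I_0 = \T^{\prime}\setminus \tilde I$, so this exhibits $\QR$ as a graded free $\kk[x_0,\ldots,x_k]$-module having this set of terms as a homogeneous free basis. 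Sorting these basis elements by degree then produces exactly the claimed decomposition with multiplicities $m_e = |\{t\in\T^{\prime}\setminus \tilde I\mid \deg(t)=e\}|$ and bound $d$.

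The remaining task is to show the basis is finite, i.e.\ that $\tilde I_0$ is $(x_{k+1},\ldots,x_n)$-primary. The plan is to deduce this from Cohen-Macaulayness together with the quasi-stable structure. Since no associated prime of the monomial ideal $\tilde I$ contains any $x_i$ with $i\leq k$, the variables $x_0,\ldots,x_k$ form a regular sequence on $\QR$. Quotienting by this regular sequence preserves the Cohen-Macaulay property, giving that
\[
\QR/(x_0,\ldots,x_k)\QR \;\cong\; \kk[x_{k+1},\ldots,x_n]/\tilde I_0
\]
is Cohen-Macaulay. Moreover, each $x_j$ with $k+1\leq j\leq n$ divides some minimal generator $x^\alpha\in \mathcal{B}_{\tilde I}$ by hypothesis, and minimality of $x^\alpha$ gives $x^\alpha/x_j\notin \tilde I_0$, so $\bar x_j$ is a zero-divisor on this quotient. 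Combining this zero-divisor information with the Cohen-Macaulay criterion of Proposition~\ref{prop:CohenMacaulayQuasiStableIdeals} and the Pommaret-basis structure of $\tilde I_0$ (which forces the existence of a pure power $x_j^{e_j}$ for each $j\geq k+1$) drives the Krull dimension of $\kk[x_{k+1},\ldots,x_n]/\tilde I_0$ to zero; hence $\tilde I_0$ is Artinian and $\T^{\prime}\setminus \tilde I$ is finite.

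I expect the genuine obstacle to be this last step. A Cohen-Macaulay graded ring can have strictly positive depth even when every variable is a zero-divisor (as happens already for $\kk[x,y,z]/(xz,yz)$), so the argument must genuinely invoke the quasi-stable / Pommaret-basis structure of $\tilde I_0$, not just the zero-divisor fact. Once Artinianness is in place, the statement of the lemma follows at once from the tensor-product identification in the first paragraph together with the degree count giving the multiplicities $m_e$.
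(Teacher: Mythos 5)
Your tensor-product packaging $\QR \cong \kk[x_0,\ldots,x_k]\otimes_\kk\bigl(\kk[x_{k+1},\ldots,x_n]/\tilde I_0\bigr)$ is a clean reformulation of what the paper does degree by degree: the paper exhibits a disjoint decomposition of the non-zero term-residues in $S_q$ as products $t\cdot[u]$ with $t\in\T'\setminus\tilde I$ and $u\in\T''$, which is exactly the term-level expression of your isomorphism, so the two arguments carry the same content. The crux for both is the finiteness of $\T'\setminus\tilde I$, i.e.\ that $\tilde I$ contains a pure power of each $x_j$ with $j>k$; the paper simply asserts this, and you are right that it is the genuine obstacle.

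Your route to it, however, contains a real gap and some dead weight. The observations that $x_0,\ldots,x_k$ form a regular sequence on $\QR$, that $\kk[x_{k+1},\ldots,x_n]/\tilde I_0$ is therefore Cohen-Macaulay, and that each $\bar x_j$ ($j>k$) is a zero-divisor there are all correct but, as you note yourself, do not force Artinianness and end up unused. More seriously, the parenthetical claim that the Pommaret-basis structure of $\tilde I_0$ by itself \emph{forces the existence of a pure power $x_j^{e_j}$ for each $j\geq k+1$} is false as stated: quasi-stability alone gives no pure powers at all ($\tilde I=(x_n)$ is quasi-stable). What is actually needed is the interplay of Proposition~\ref{prop:CohenMacaulayQuasiStableIdeals} with quasi-stability, spelled out as follows: since $\mathcal B_{\tilde I}\subseteq\T'$ and $x_{k+1}$ divides some minimal generator, the integer $m$ of Proposition~\ref{prop:CohenMacaulayQuasiStableIdeals} equals $k+1$, so Cohen-Macaulayness gives a pure power $x_{k+1}^{a}\in\tilde I$; applying quasi-stability iteratively to the term $x_{k+1}^{a}$, whose minimal variable is $x_{k+1}$, then produces $x_j^{a_j}\in\tilde I$ for every $j>k+1$. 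With that in hand, $\tilde I_0$ is Artinian and the remainder of your proof is sound. A minor side correction: your illustrative example $\kk[x,y,z]/(xz,yz)$ has depth~$1$ and dimension~$2$, hence is not Cohen-Macaulay; $\kk[x,y]/(xy)$ would illustrate the intended point.
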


\begin{proof}
First, note that $\tilde{I}$ contains pure powers of all variables $x_j$ with $j>k$; hence, the set $\mathcal{T}(e):=\{t\in \T^{\prime}\setminus \tilde{I}\mid \deg(t)=e\}$ is finite. Since the generators of $\tilde{I}$ are terms in $\mathbb{T}^\prime$, we have for each $t\in \mathcal{T}(e)$ an injection 
$$\iota: \mathbb{T}^{\prime\prime}\rightarrow \QR, u\mapsto [u\cdot t].$$ 

Now, we turn to the graded decomposition. Recall that the ring $\QR$ inherits a grading from $R$. It is easy to see that the set of terms of degree $q\geq 0$ in $\QR$ is disjointly decomposed as follows:
\begin{equation}\label{eqn:FiniteFreeModuleDecomposition}
    S_q\cap \left\{[u]\mid u\in\T\right\}=\bigsqcup_{e=0}^d \bigsqcup_{t\in \mathcal{T}(e)} t\cdot \left(S_{q-e}\cap \left\{[u]\mid u\in\T^{\prime\prime}\right\}\right).
\end{equation}
It is important to note that all elements in the sets of the right hand side of \eqref{eqn:FiniteFreeModuleDecomposition} are non-zero; this is guaranteed by the Cohen-Macaulay property of $\tilde{I}$. The claim follows.
\end{proof}

We now highlight that the definition of quasi-stable ideal $U$ of $\QR$ as the image in $\QR$ of a quasi-stable ideal $\tilde J$ in $R$ containing $\tilde I$ is equivalent to the definition of quasi-stable submodule of a free module given in \cite[Definition~3.2 item (i)]{Quot}.

\begin{proposition}\label{prop:EquivalenceQuasiStableCharacterizations}
Let $U\subseteq \QR$ be a monomial ideal with minimal generating set $\mathcal B_U$. Then $U$ is quasi-stable if and only if $\mathcal B_U$, interpreted as a monomial subset of the $\kk[x_0,\ldots,x_k]$-module $\QR$, generates a quasi-stable submodule.
\end{proposition}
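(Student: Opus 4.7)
The plan is to view $U$ as a $\kk[x_0,\ldots,x_k]$-submodule of the free module $\QR$ furnished by Lemma~\ref{lem:FiniteModuleDecomp} and reduce both sides of the equivalence to a column-wise criterion along the basis $\{[t]\colon t\in\T^{\prime}\setminus\tilde I\}$. Concretely, for each such $t$ I would introduce the column ideal
\[
N_t:=\{f\in\kk[x_0,\ldots,x_k]\mid f\cdot[t]\in U\}=(\tilde J:t)\cap\kk[x_0,\ldots,x_k],
\]
the second equality holding because $\tilde I$ is monomial with minimal generators in $\kk[x_{k+1},\ldots,x_n]$ and $t\notin\tilde I$, so that multiplying $t$ by any element of $\kk[x_0,\ldots,x_k]$ never enters $\tilde I$. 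Unpacking the definition of a quasi-stable submodule of a free $\kk[x_0,\ldots,x_k]$-module then reduces the statement to: $\tilde J$ is quasi-stable in $R$ if and only if every $N_t$ is quasi-stable as an ideal in $\kk[x_0,\ldots,x_k]$.

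For the forward implication, I would fix $t$ and a monomial $x^\alpha\in N_t$ together with a variable $x_j>\min(x^\alpha)$ with $j\leq k$. Since $\min(t)>x_k\geq\min(x^\alpha)$, the product $x^\alpha t\in\tilde J$ has the same minimal variable and the same $\min(x^\alpha)$-exponent as $x^\alpha$; applying quasi-stability of $\tilde J$ supplies an $s$ with $x_j^s\,(x^\alpha t)/\min(x^\alpha)^{e_{\min}}\in\tilde J$, and factoring out $t$ yields the witness $x_j^s x^\alpha/\min(x^\alpha)^{e_{\min}}\in N_t$.

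For the backward implication, I would take $x^\beta\in\tilde J$ and $x_j>\min(x^\beta)$. If $x^\beta\in\tilde I$, quasi-stability of $\tilde I$ already provides a witness in $\tilde I\subseteq\tilde J$. Otherwise decompose $x^\beta=x^{\beta^{\prime\prime}}x^{\beta^\prime}$ with $x^{\beta^\prime}\in\T^{\prime}\setminus\tilde I$, so $x^{\beta^{\prime\prime}}\in N_{x^{\beta^\prime}}$. When $x_j\leq x_k$, necessarily $x^{\beta^{\prime\prime}}\neq 1$ and $\min(x^\beta)=\min(x^{\beta^{\prime\prime}})$, so the assumed quasi-stability of $N_{x^{\beta^\prime}}$ produces an $s$ and multiplying the resulting witness by $x^{\beta^\prime}$ places it in $\tilde J$. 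When $x_j>x_k$, the Cohen-Macaulay hypothesis together with Lemma~\ref{lem:FiniteModuleDecomp} makes $\QR$ a \emph{finite} $\kk[x_0,\ldots,x_k]$-module, which forces some pure power $x_j^{a_j}$ to lie in $\tilde I$; since $x_j>\min(x^\beta)$, dividing by $\min(x^\beta)^{e_{\min}}$ leaves $x_j^{a_j}$ intact, so $s=a_j$ gives a witness inside $\tilde I\subseteq\tilde J$.

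The main obstacle is exactly this last subcase $x_j>x_k$: the column ideals $N_t$ encode only the action of the ``parameter'' variables $x_0,\ldots,x_k$ and carry no information about multiplication by the ``fiber'' variables $x_{k+1},\ldots,x_n$, so the needed witness cannot be manufactured from the $N_t$ themselves and must instead be extracted from the finiteness structure of $\QR$ over $\kk[x_0,\ldots,x_k]$ supplied by Lemma~\ref{lem:FiniteModuleDecomp}.
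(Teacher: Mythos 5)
Your proof is substantively more detailed than the paper's (which only says ``routine verification''), and the overall strategy — reduce both sides to a column-wise quasi-stability criterion, using the Cohen--Macaulay hypothesis precisely in the subcase $x_j>x_k$ to supply pure powers $x_j^{a_j}\in\tilde I$ — is sound and correctly identifies where the structural hypothesis is needed. Both directions of your argument check out for the criterion you set up.

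However, there is a mismatch between the object you analyse and the one the proposition literally names, and you should flag it rather than silently switch. You take $U$ itself as a $\kk[x_0,\ldots,x_k]$-submodule of $\QR$, with column ideals $N_t=\{f\in\kk[x_0,\ldots,x_k]\mid f[t]\in U\}$. The proposition instead speaks of the submodule generated by $\mathcal B_U$ over $\kk[x_0,\ldots,x_k]$, whose column ideals $M_t$ (generated by $\{x^{\gamma''}: x^{\gamma''}t\in\mathcal B_U\}$) satisfy only $M_t\subseteq N_t$, with strict inclusion in general: $N_t$ also picks up contributions from generators sitting in columns $t'$ properly dividing $t$, because the ideal $U$ is closed under multiplication by $x_{k+1},\ldots,x_n$ while the $\kk[x_0,\ldots,x_k]$-span of $\mathcal B_U$ is not. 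The distinction is not cosmetic. For instance with $\tilde I=(x_3^3)$ in $\kk[x_0,\ldots,x_3]$ (so $k=2$) and $\tilde J=(x_1x_3^2,\,x_2x_3,\,x_3^3)$, the ideal $\tilde J$ is quasi-stable and every $N_t$ is quasi-stable ($N_{[x_3]}=(x_2)$, $N_{[x_3^2]}=(x_1,x_2)$), but the $\mathcal B_U$-generated submodule has column ideal $M_{[x_3^2]}=(x_1)\subset\kk[x_0,x_1,x_2]$, which is not quasi-stable. So the equivalence you prove — $\tilde J$ quasi-stable $\iff$ each $N_t$ quasi-stable — is the one that is actually correct and is what the paper uses downstream (e.g.\ in Corollary~\ref{cor:solution} and the discussion after the proposition), but it is not the same assertion as ``$\mathcal B_U$ generates a quasi-stable submodule'' with the column-wise definition. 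You should either justify that the two readings coincide under a definition from \cite{Quot} that you would need to quote, or explicitly state that you are proving the $N_t$ formulation and explain why that is the relevant one. As it stands, the reduction step ``unpacking the definition \dots reduces the statement to \dots every $N_t$ is quasi-stable'' is asserted without argument and does not match the literal statement.
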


\begin{proof}
The proof is by routine verification of quasi-stability conditions (as given in \cite[Definition 2.2]{Quot}) for the minimal generators of the ideals and submodules that are considered.
\end{proof}

Thanks to Proposition \ref{prop:EquivalenceQuasiStableCharacterizations}, a quasi-stable ideal $U$ of $\QR$ can be even considered as a quasi-stable $\kk[x_0,\ldots,x_k]$-submodule of $\QR$, and vice versa.

However, the notions of marked set over a quasi-stable ideal $U$ introduced in Definition \ref{def:U-marked set} and of marked set over a submodule given in \cite[Definition 4.3]{Quot} are different, as we will see in Example \ref{ex:important}. 

\begin{example}\label{ex:important}
Take $R=\kk[x_0,x_1,x_2]$, $\tilde{I}=(x_2^7)$ and $\tilde{J}=(x_1^2,x_2^7)$. Let $U$ be the ideal that is the image of $\tilde J$ in $\QR$. Observe that $\mathcal{P}_{\tilde{J}}=\{x_1^2,x_1^2x_2,\ldots,x_1^2x_2^6,x_2^7\}$. Thus,  according to Definition \ref{def:U-marked set}, the set $\hat{F}=\{x_0x_1+x_1^2\}$ is not a $U$-marked set (for such a set, we would need additionally polynomials marked on each of the terms $x_1^2x_2,\ldots,x_1^2x_2^6$). Nevertheless, $\hat{F}$ is marked on the Pommaret basis of the quasi-stable monomial $\kk[x_0,x_1]$-submodule of $\QR$ generated by $\{x_1^2\cdot[1]\}$, because this singleton set is also the Pommaret basis of the submodule generated by it (see \cite[Definition~ 3.1]{Quot}).
\end{example}

Nevertheless, for high degrees $q$, the notions of marked sets over a quasi-stable submodule and of $U$-marked sets for a quasi-stable ideal $U\subseteq \QR$ coincide.

Recall that the regularity $\mathrm{reg}(\tilde I)$ of a quasi-stable ideal $\tilde I$ coincides with the ma\-xi\-mum degree of a term in its Pommaret basis $\mathcal P_{\tilde I}$. Hence, for every $q\geq \mathrm{reg}(\tilde I)$, the Pommaret basis of $U_{\geq q}$ coincides with $\mathcal B_{U_{\geq q}}$.

\begin{corollary}\label{cor:solution}
Let $U\subset \QR$ be a quasi-stable ideal and $\tilde J$ be a quasi-stable ideal whose image in $\QR$ is $U$. Let $F\subseteq S_q$ be a finite set of homogeneous elements of $\QR$ of degree $q\geq \max\{\mathrm{reg}(\tilde J), \mathrm{reg}(\tilde I)\}$. Then, $F$ is a $U_{\geq q}$-marked basis if and only if it is a marked basis over the submodule $U_{\geq q}$.
\end{corollary}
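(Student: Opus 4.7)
The plan is to reduce the equivalence to a direct comparison of the combinatorial data attached to the two notions: head terms, multiplicative variables, and the resulting graded decomposition of $\QR$. Once these match, both implications are immediate from the respective definitions.

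First, I identify the head terms. By the characterization of the regularity of a quasi-stable ideal recalled in Section~\ref{sec:preliminaries}, for $q\geq \mathrm{reg}(\tilde J)$ one has $\PP{\tilde J_{\geq q}}=\tilde J_q$, and analogously $\PP{\tilde I_{\geq q}}=\tilde I_q$. Hence, by Definition~\ref{def:U-marked set}, the head terms of a $U_{\geq q}$-marked set are precisely the images in $\QR$ of $\tilde J_q\setminus \tilde I_q$, which form a $\kk$-basis of $U_q$. Via the free decomposition of $\QR$ in Lemma~\ref{lem:FiniteModuleDecomp} and Proposition~\ref{prop:EquivalenceQuasiStableCharacterizations}, the same images are the minimal generators of $U_{\geq q}$ as a $\kk[x_0,\ldots,x_k]$-submodule of $\QR$, and for $q$ in the stated range they agree with its submodule-theoretic Pommaret basis.

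Second, I match the multiplicative variables. The key combinatorial ingredient is the inequality $d<\mathrm{reg}(\tilde I)$, with $d$ as in Lemma~\ref{lem:FiniteModuleDecomp}. To see this, pick $t\in\T^{\prime}\setminus\tilde I$ of maximal degree $d$; then $x_m t\in \tilde I$ lies in some Pommaret cone $\mathcal C_{\mathcal P}(x^\alpha)$, and a direct analysis using the CM condition $\min(x^\alpha)\geq x_m$ together with $t\notin\tilde I$ forces the cofactor of $x^\alpha$ to be $1$, producing a Pommaret generator of degree $d+1$. It follows that, for $q\geq\mathrm{reg}(\tilde I)$, every $t\in\tilde J_q\setminus\tilde I_q$ decomposes as $t=t'\cdot t''$ with $t'\in\T^{\prime}$ and $t''\in\T^{\prime\prime}\setminus\{1\}$, whence $\min(t)=\min(t'')\leq x_k$ and the multiplicative variables of $t$ in $R$ coincide with those of $t''\cdot [t']$ in the submodule sense.

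With head terms and multiplicative variables aligned, the two marked-basis conditions express the same graded direct sum decomposition of $\QR$: in the ideal sense through Definition~\ref{def:rel marked basis} and Theorem~\ref{thm:linkToGenMethod} (after projecting the decomposition of $R_A$ modulo $\tilde I$), and in the submodule sense as a $\kk[x_0,\ldots,x_k]$-module direct sum compatible with Lemma~\ref{lem:FiniteModuleDecomp}. In both cases, the complement of $(F)$ is spanned by the images of the non-head terms, so the equivalence follows.

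The main obstacle is the combinatorial inequality $d<\mathrm{reg}(\tilde I)$: without it, head terms lying entirely in $\T^{\prime}$ would have ideal-theoretic multiplicative variables reaching beyond $\{x_0,\ldots,x_k\}$, exactly the mismatch illustrated by the auxiliary singleton submodule in Example~\ref{ex:important}. Once this inequality is secured, the remainder is a bookkeeping comparison of the cone decompositions produced by the two definitions.
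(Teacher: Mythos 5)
Your proof is correct and follows the same route as the paper: for $q\geq\max\{\mathrm{reg}(\tilde J),\mathrm{reg}(\tilde I)\}$ the minimal generating sets of the truncated ideals agree with their Pommaret bases, so the head-term sets underlying the two marked-basis notions coincide, which is exactly the paper's key observation. The additional step you supply — deducing $d<\mathrm{reg}(\tilde I)$ from the Cohen--Macaulay pure-power condition and thereby checking that every head term has its minimal variable in $\{x_0,\dots,x_k\}$, so the ideal-theoretic and module-theoretic Pommaret cone decompositions genuinely coincide — is left implicit in the paper's terser argument, and making it explicit is a sound elaboration rather than a different approach.
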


\begin{proof}
We already observed that in the present setting a quasi-stable monomial ideal $U$ of $\QR$ can be even considered as a quasi-stable $\kk[x_0,\ldots,x_k]$-submodule of $\QR$, and vice versa.
Indeed, by Proposition \ref{prop:EquivalenceQuasiStableCharacterizations}, in both the two above interpretations $\mathcal B_U$ is a set of generators of $U$.

The difference between Definition \ref{def:U-marked set} and \cite[Definition 4.3]{Quot} appears when $\mathcal B_{\tilde J}$ does not coincide with $\PP{\tilde J}$, so that $\mathcal B_U$ does not contain all the terms on which we expect marked polynomials in a $U$-marked basis. 

If $q\geq \max\{\mathrm{reg}(\tilde J), \mathrm{reg}(\tilde I)\}$, then $\mathcal B_{\tilde J_{\geq q}}=\PP{\tilde J_{\geq q}}$ and $\mathcal B_{\tilde I_{\geq q}}=\PP{\tilde I_{\geq q}}$. Hence $\mathcal B_{U_{\geq q}}=\PP{\tilde J_{\geq q}}\setminus \PP{\tilde I_{\geq q}}=\mathcal B_{\tilde J_{\geq q}}\setminus \mathcal B_{\tilde I_{\geq q}}$ contains all the expected terms for a $U_{\geq q}$-marked basis.
\end{proof}

\begin{example}\label{ex:TransformedMkdSetInHighDegrees}
Consider $\tilde{I}=(x_2^7)\subseteq R=\kk[x_0,x_1,x_2]$, and $\hat{F}=\{x_0x_1^6+x_1^7\}$, which is marked on $\{x_1^7\}$, and $U=(x_1^7)$ is a quasi-stable ideal in $\QR$. $\hat{F}$ is not a $U$-marked set in the sense of Definition \ref{def:U-marked set}, because the Pommaret basis $\mathcal{P}_{\tilde{J}}$ of $\tilde{J}=(x_1^7,x_2^7)\subseteq R$ includes also the terms $x_1^7x_2^a$ for $1\leq a\leq 6$. Note that the degrevlex leading ideal of $J=(\hat{F},\tilde{I})$ is exactly $\tilde{J}$; this implies that $J$ (and hence also $(F,\tilde{I})$) is $13$-regular ($13$ being the highest degree of an element of $\mathcal{P}_{\tilde{J}}$).

Now consider, as above, $\tilde{I}=(x_2^7)$, but set $F=\{x_1x_2^6\}$. We have $F\subset S_7$, and $7=\mathrm{reg}(\tilde{I})$; moreover $F$ is already marked on $\{x_1x_2^6\}$ which generates a  quasi-stable ideal $U=(x_1x_2^6)\subseteq \QR$. Since the Pommaret basis $\mathcal{P}_{\tilde{J}}$ of $\tilde{J}=(F,\tilde{I})$ is exactly $F\cup \{ x_2^7 \}$, $F$ is a $U$-marked set in the sense of Definition \ref{def:U-marked set}. Note that the ideal $J=\tilde{J}=(F,\tilde{I})\subseteq R$ is $7$-regular, because it is a quasi-stable monomial ideal whose minimal Pommaret basis has maximal degree $7$.
\end{example}

In the remaining part of this section, we assume that the field $\kk$ is infinite. 

We denote by $\PGL{k+1}$ the subset of $\mathrm{PGL}_{\kk}(n+1)$ whose elements define invertible change of coordinates of the following kind:
$$x_i\mapsto x_i \quad\text{for}\;\,i=k+1,\dots,n,\quad x_j\mapsto \sum_{t=0}^k g_{jt}x_t\quad \text{for}\;\,j=0,\dots,k.$$
For any element $g\in \PGL{k+1}$ we denote by $\tilde g$ the automorphism induced by $g$ on $\QR$.

\begin{proposition}\label{prop:QuasiStableTrafo}
For a given degree $q\geq 0$, let $F\subset S_q$ be a finite set.
 There exists $g\in \PGL{k+1}$ such that $\tilde{g}(F)$ becomes after an autoreduction a marked set over a quasi-stable monomial submodule of $\QR$ (according to \cite[Definition~4.3]{Quot}).
\end{proposition}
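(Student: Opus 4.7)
The plan is to reduce this claim to the analogous statement for finite sets of homogeneous elements of a graded free module over the polynomial ring $\kk[x_0,\ldots,x_k]$, which is the setting covered in \cite{Quot}. By Lemma \ref{lem:FiniteModuleDecomp}, the Cohen-Macaulay quotient $\QR$ is a finitely generated graded free $\kk[x_0,\ldots,x_k]$-module with basis $\{[t]\mid t\in\T^{\prime}\setminus\tilde{I}\}$. Under this identification, every element of $S_q$ corresponds uniquely to a homogeneous element of this free module, and in particular $F$ becomes a finite set of such elements.

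Next, I would observe that any $g\in \PGL{k+1}$ acts only on the variables $x_0,\ldots,x_k$ and fixes $x_{k+1},\ldots,x_n$. Since $\tilde{I}$ is generated by terms involving only $x_{k+1},\ldots,x_n$, it is stabilised by $g$, hence $g$ descends to an automorphism $\tilde g$ of $\QR$ which is $\kk[x_{k+1},\ldots,x_n]$-linear. Writing an element of $S_q$ in the free basis of Lemma \ref{lem:FiniteModuleDecomp} as $\sum_{t\in\T^{\prime}\setminus\tilde{I}} P_t(x_0,\ldots,x_k)\cdot[t]$, the action of $\tilde g$ simply replaces each coefficient $P_t$ by $g(P_t)$. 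In other words, $\tilde g$ is exactly the coordinate change induced on the free $\kk[x_0,\ldots,x_k]$-module $\QR$ by the standard $\mathrm{PGL}(k+1)$-action on the base ring.

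I would then invoke the module-theoretic version of Galligo's theorem used in \cite{Quot} (the analogue of \cite[Proposition~7.2]{Quot} for finite subsets of graded free modules): for $g$ in a Zariski-open subset of $\PGL{k+1}$, the leading module of $\tilde g(F)$ with respect to a suitable term-over-position order refining $x_0<\cdots<x_k$ is a quasi-stable monomial $\kk[x_0,\ldots,x_k]$-submodule of $\QR$. Since $\kk$ is infinite, this open set contains $\kk$-rational points. For any such $g$, performing autoreduction on $\tilde g(F)$ (reducing each transformed polynomial modulo the leading terms of the others) produces a set marked on the generators of this quasi-stable submodule, i.e.\ a marked set in the sense of \cite[Definition 4.3]{Quot}.

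The main obstacle I anticipate is verifying that the quasi-stability criterion for the leading $\kk[x_0,\ldots,x_k]$-submodule, produced by the generic change of coordinates, is exactly the notion of quasi-stability required by \cite[Definition 4.3]{Quot} (rather than the $\QR$-module notion from Definition \ref{def:U-marked set}, which Example \ref{ex:important} shows is strictly stronger). This is precisely why the proposition is stated in the submodule language; the full compatibility between the two notions in sufficiently high degrees will be exploited afterwards via Corollary \ref{cor:solution}, but is not needed here.
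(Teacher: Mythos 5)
Your proof is correct and follows essentially the same route as the paper: reduce to the graded free $\kk[x_0,\ldots,x_k]$-module structure of $\QR$ established in Lemma~\ref{lem:FiniteModuleDecomp}, identify the $\PGL{k+1}$-action on $\QR$ with the coordinate change on the base ring, and then invoke the generic-coordinates result of \cite{Quot} for free modules (the paper cites \cite[Corollary~7.4]{Quot} directly, which is the statement you paraphrase as the module version of Galligo's theorem). Your concluding remark that the proposition is deliberately phrased in terms of submodule marked sets, with the comparison to $U$-marked sets deferred to Corollary~\ref{cor:solution}, is also exactly the intent of the paper.
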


\begin{proof}
We can directly apply \cite[Corollary~7.4]{Quot}, because $F$ is a subset of a single degree component of the finitely generated free graded $\kk[x_0,\ldots,x_k]$-module $\QR$ by Lemma \ref{lem:FiniteModuleDecomp}.
\end{proof}



\begin{corollary}\label{cor:tranformation}
For every field extension $\mathbb L$ of $\kk$, let $J\subseteq R_{\mathbb L}$ be a saturated ideal containing $\tilde I$ and $t$ be an integer such that $t\geq \max\{\mathrm{reg}(J),\mathrm{reg}(\tilde I)\}$. Then, there exists $g\in \PGL{k+1}$ such that the ideal $\tilde g(J_t) \cdot \QR$ is generated by the image in $\QR$ of a $\mathcal P_{\tilde J}$-marked basis $H$ relative to $\tilde I$ that belongs to $\MFFunctor{\tilde I_{\geq t}, \tilde J_{\geq t}}(\mathbb L)$, for some saturated quasi-stable ideal $\tilde J$ containing $\tilde I$.
\end{corollary}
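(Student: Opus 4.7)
The plan is to feed an $\mathbb{L}$-basis of the image of $J_t$ in $\QR_{\mathbb{L}}$ into Proposition \ref{prop:QuasiStableTrafo}, and then translate the resulting marked set over a quasi-stable submodule of $\QR_{\mathbb{L}}$ into a $\PP{\tilde J_{\geq t}}$-marked basis relative to $\tilde I_{\geq t}$ by combining Corollary \ref{cor:solution} with Theorem \ref{thm:linkToGenMethod}.

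First, I would choose $F$ to be an $\mathbb{L}$-basis of the image $[J_t]\subseteq \QR_{\mathbb{L},t}$. Because $J\supseteq \tilde I$, the space $[J_t]$ is finite-dimensional, and by Lemma \ref{lem:FiniteModuleDecomp} it sits inside a single graded component of the finitely generated free graded $\kk[x_0,\ldots,x_k]$-module $\QR_{\mathbb{L}}$. Since $\mathbb{L}$ is infinite (as an extension of the infinite field $\kk$), Proposition \ref{prop:QuasiStableTrafo} yields $g\in \PGL{k+1}$ such that $\tilde g(F)$, after autoreduction, becomes a marked set $\hat H$ over a quasi-stable submodule $U\subseteq \QR_{\mathbb{L}}$ in the sense of \cite[Definition~4.3]{Quot}. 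Because $F\subset \QR_{\mathbb{L},t}$, the minimal generators of $U$ all have degree $t$.

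Next, I would build the saturated quasi-stable $\tilde J\supseteq \tilde I$. By Proposition \ref{prop:EquivalenceQuasiStableCharacterizations}, the ideal in $R$ generated by a term-lift of $\mathcal B_U$ together with $\mathcal B_{\tilde I}$ is quasi-stable; taking its saturation yields such a $\tilde J$. Since $\mathcal B_U$ consists of degree-$t$ terms and $\mathrm{reg}(\tilde I)\leq t$, after saturation one obtains $\PP{\tilde J_{\geq t}}=\mathcal B_{\tilde J_{\geq t}}$ and $\mathrm{reg}(\tilde J_{\geq t})=t$. Because $t\geq \max\{\mathrm{reg}(\tilde J_{\geq t}),\mathrm{reg}(\tilde I_{\geq t})\}$, Corollary \ref{cor:solution} identifies $\hat H$ with a $U_{\geq t}$-marked set in the ring-theoretic sense of Definition \ref{def:U-marked set}. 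Lifting its elements term-by-term to $R_{\mathbb{L}}$ produces a set $H\subseteq R_{\mathbb{L},t}$ marked on $\PP{\tilde J_{\geq t}}\setminus \PP{\tilde I_{\geq t}}$ with tails in $\langle \NN(\tilde J_{\geq t})\rangle_{\mathbb{L}}$, i.e., a $\PP{\tilde J_{\geq t}}$-marked set relative to $\tilde I_{\geq t}$.

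The principal obstacle is upgrading this marked \emph{set} to a marked \emph{basis}. For this I would apply Theorem \ref{thm:linkToGenMethod} to $J'=(H\cup \mathcal B_{\tilde I_{\geq t}})\subseteq R_{\mathbb{L}}$, reducing the task to exhibiting a genuine $\PP{\tilde J_{\geq t}}$-marked basis of $J'$ containing $H$. The key points are: (i) autoreduction preserves the $\mathbb{L}$-span, so $[H]=[\tilde g(J_t)]$ in $\QR_{\mathbb{L},t}$, and since $t\geq \mathrm{reg}(J)=\mathrm{reg}(\tilde g(J))$ the image of $H$ generates $\tilde g(J)\cdot \QR_{\mathbb{L}}$ in every degree $\geq t$; (ii) the submodule theory of \cite{Quot} ensures that $\hat H$, having the correct cardinality $|\mathcal B_U|$ and generating the correct submodule, is automatically a marked basis over $U_{\geq t}$ in the submodule sense. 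Together these give the direct-sum decomposition $\QR_{\mathbb{L}}=(\tilde g(J)\cdot \QR_{\mathbb{L}})\oplus \langle \NN(\tilde J_{\geq t})\rangle_{\mathbb{L}}$ in degrees $\geq t$, which lifts through $R_{\mathbb{L}}\to \QR_{\mathbb{L}}$, via Lemma \ref{lemma:lemma1}, to the graded decomposition \eqref{eq:decomposition} for $J'$. Thus $H\in \MFFunctor{\tilde I_{\geq t},\tilde J_{\geq t}}(\mathbb{L})$ and the image of $H$ in $\QR_{\mathbb{L}}$ generates $\tilde g(J_t)\cdot \QR_{\mathbb{L}}$, as required.
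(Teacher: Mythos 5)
Your plan tracks the paper's proof closely: both feed a degree-$t$ generating set of $[J_t]$ into Proposition~\ref{prop:QuasiStableTrafo}, obtain a marked set over a quasi-stable submodule after autoreduction, build $\tilde J$ from the head terms together with $\tilde I$, and translate back to a $\PP{\tilde J_{\geq t}}$-marked basis relative to $\tilde I_{\geq t}$ via Corollary~\ref{cor:solution}. The detour through Theorem~\ref{thm:linkToGenMethod} at the end is unnecessary — the paper concludes directly from Corollary~\ref{cor:solution} and Definition~\ref{def:U-marked set} — but it is not wrong.

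The genuine gap is in your point~(ii), where you assert that $\hat H$, ``having the correct cardinality $|\mathcal B_U|$ and generating the correct submodule, is automatically a marked basis over $U_{\geq t}$ in the submodule sense.'' This is exactly the step that needs an argument and is not automatic: Proposition~\ref{prop:QuasiStableTrafo} only delivers a marked \emph{set} over a quasi-stable submodule, and a marked set that generates the right submodule is not \emph{a priori} a marked basis (the required direct-sum decomposition must still be established). Moreover, a marked set must carry exactly $|\mathcal P_U|$ elements — one per Pommaret-basis term — not $|\mathcal B_U|$, so counting generators is the wrong cardinality to invoke. The paper handles this step by citing \cite[Lemma~7.5]{Quot}, which allows one to \emph{assume} that after the change of coordinates $H$ is a marked basis over a stable module $U=(\mathrm{Ht}(H))$ of regularity $\leq t$; this additionally yields the regularity bound on $(\mathrm{Ht}(H),\mathcal P_{\tilde I_{\geq t}})$ that you need in order to saturate and still have $\PP{\tilde J_{\geq t}}=\mathcal B_{\tilde J_{\geq t}}$. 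Without this lemma (or a replacement argument that a generic linear change of coordinates makes the autoreduced set a marked \emph{basis} over a \emph{stable} module of small regularity, not merely a marked set over a quasi-stable one), your proof does not close.
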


\begin{proof}

By the assumptions on $t$, we can take a set $F$ of generators of $(J_t)/\tilde I$ made only of elements of degree $t$, so that $F\subseteq S_t$. Recall that we are now assuming that $\kk$ is infinite, and hence Zariski dense in any field extension $\mathbb L$. 

Then by Proposition \ref{prop:QuasiStableTrafo}, there exists $g\in \PGL{k+1}$ such that $\tilde{g}(F)$ yields after an autoreduction a marked set $H$ over a quasi-stable monomial submodule of $\QR$. By \cite[Lemma~7.5]{Quot}, we can assume that $H$ is a marked basis over the stable module $U=(\mathrm{Ht}(H))$ (see \cite[Definition~3.2]{Quot}) with regularity $\leq t$. Hence, the ideal $(\mathrm{Ht}(H),\mathcal{P}_{\tilde{I}_{\geq t}})$ has regularity $\leq t$, and its saturation $\tilde J$ contains $\tilde I$ by Lemma \ref{lem:satEquiv}. Thus, thanks to  Corollary \ref{cor:solution}, $H$ is also a $U$-marked basis and we obtain the thesis by Definition \ref{def:U-marked set}.
\end{proof}

Recall that if $J$ belongs to $\MFFunctor{\tilde J}(\kk)$ then $J_{\geq t}$ belongs to $\MFFunctor{\tilde J_{\geq t}}(\kk)$, but the converse is not true (e.g. \cite[Example 3.8]{BCRAffine}). However, by Lemma \ref{lem:satEquiv}, if $J_{\geq t}$ belongs to $\MFFunctor{\tilde J_{\geq t}}(\kk)$ and contains $\tilde I_{\geq t}$, then $J$ contains $\tilde I$.


Given the quasi-stable ideal $\tilde I$, let $p(z)$ be any Hilbert polynomial as in Section~\ref{sec:MFandHilb}, and consider the sets
$$Q_{p(z)}:=\{\tilde J \text{ saturated quasi-stable} \ \vert \  \QR/\tilde J \text{ has Hilbert polynomial } p(z) \},$$
$$Q_{p(z),\tilde I}:=\{\tilde J \text{ saturated quasi-stable} \ \vert \ \tilde J\supseteq \tilde I \text{ and } \QR/\tilde J \text{ has Hilbert polynomial } p(z) \}.$$
The \emph{Gotzmann number} $r$ of the Hilbert polynomial $p(z)$ is the smallest integer such that $r\geq \mathrm{reg}(J)$ for every  saturated ideal $J$ defining a scheme lying on $\HScheme{\mathbb P^n}{p(z)}$. 

For every $g \in \PGL{k+1}$, we consider the functor $\MFFunctor{\tilde I_{\geq t}, \tilde J_{\geq t}}^{\tilde g}$  that assigns to every $\kk$-algebra $A$ the set $\{(\tilde g^{-1}(G))\subset A[x] \ \vert \ (G) \in \MFFunctor{\tilde I_{\geq t}, \tilde J_{\geq t}}(A)\}$
and to every $\kk$-algebra  morphism $\sigma:A\rightarrow A'$, the map 
\begin{eqnarray*}
\MFFunctor{\tilde I_{\geq t}, \tilde J_{\geq t}}^{\tilde g}(\sigma): \MFFunctor{\tilde I_{\geq t}, \tilde J_{\geq t}}^{\tilde g}(A)&\rightarrow \MFFunctor{\tilde I_{\geq t}, \tilde J_{\geq t}}^{\tilde g}(A')\\
\tilde g^{-1}(G) &\mapsto \tilde g^{-1}(\sigma(G)). 
\end{eqnarray*}

The transformation $\tilde g^{-1}$ induces a natural isomorphism of functors between the functors $\MFFunctor{\tilde I_{\geq t}, \tilde J_{\geq t}}$  and $\MFFunctor{\tilde I_{\geq t}, \tilde J_{\geq t}}^{\tilde g}$. Hence, $\MFFunctor{\tilde I_{\geq t}, \tilde J_{\geq t}}^{\tilde g}$ is an open subfunctor of $\HFunctor{X}{p(z)}$ for every $g \in \PGL{k+1}$
thanks to Theorem \ref{th:open subfunctor} item \ref{it2:intersectionFunctors}. Analogously, for every $g \in \mathrm{PGL}_{\kk}{(n+1)}$, $\MFFunctor{ \tilde J_{\geq t}}^{g}$ is the open subfunctor of $\HFunctor{\mathbb P^n}{p(z)}$ that we obtain from $\MFFunctor{ \tilde J_{\geq t}}$ by the natural isomorphism induced by $g^{-1}$.

\begin{theorem}\label{thm:ric}
Let $\tilde I\subseteq R$ be a saturated quasi-stable ideal such that $\QR=R/\tilde I$ is a Cohen-Macaulay ring and let $X=\mathrm{Proj}(\QR)$ be the scheme defined by $\tilde I$. Let $p(z)$ be a Hilbert polynomial such that $p(t) \leq p_X(t)$ for $t\gg 0$, $r$ be the Gotzmann number of $p(z)$ and $t:=\max\{\mathrm{reg}(\tilde I), r\}$.
Then, there is the open cover
$$\HFunctor{X}{p(z)} = \bigcup_{g\in \PGL{k+1}} \left(\bigcup_{\tilde J\in Q_{p(z),\tilde I}}\MFFunctor{\tilde I_{\geq t}, \tilde J_{\geq t}}^{\tilde g}\right).$$
\end{theorem}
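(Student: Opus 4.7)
My plan is to verify the two requirements for an open cover of the functor $\HFunctor{X}{p(z)}$: first, that each member of the proposed family is an open subfunctor, and second, that every field-valued point of $\HFunctor{X}{p(z)}$ belongs to at least one member. The first requirement is essentially free, since the paragraphs preceding the theorem already identify $\MFFunctor{\tilde I_{\geq t}, \tilde J_{\geq t}}^{\tilde g}$ as an open subfunctor of $\HFunctor{X}{p(z)}$ via Theorem~\ref{th:open subfunctor}\ref{it2:intersectionFunctors}, provided $t\geq \rho_{\tilde J}-1$. For the chosen $t=\max\{\mathrm{reg}(\tilde I),r\}$ this bound holds for every $\tilde J\in Q_{p(z),\tilde I}$, because the Gotzmann number controls the regularity of any saturated ideal with Hilbert polynomial $p(z)$, yielding $\rho_{\tilde J}\leq \mathrm{reg}(\tilde J)\leq r\leq t$.

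For the covering, I will reduce to checking $\mathbb L$-valued points. Given an arbitrary field extension $\mathbb L/\kk$ and an $\mathbb L$-point of $\HFunctor{X}{p(z)}$, represented by a saturated ideal $J\subseteq R_\mathbb{L}$ with $J\supseteq \tilde I$ and $R_\mathbb{L}/J$ of Hilbert polynomial $p(z)$, the Gotzmann bound again gives $\mathrm{reg}(J)\leq r\leq t$, so the hypotheses of Corollary~\ref{cor:tranformation} are met. Invoking it yields $g\in \PGL{k+1}$, a saturated quasi-stable ideal $\tilde J\supseteq \tilde I$, and a $\PP{\tilde J}$-marked basis $H$ relative to $\tilde I$ such that $(H\cup \PP{\tilde I_{\geq t}})$ lies in $\MFFunctor{\tilde I_{\geq t}, \tilde J_{\geq t}}(\mathbb L)$ and the image of $H$ in $\QR_\mathbb{L}$ generates $\tilde g(J_t)\cdot \QR_\mathbb{L}$.

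The main obstacle is to confirm that this ideal $\tilde J$ actually lies in $Q_{p(z),\tilde I}$, i.e.\ that $R/\tilde J$ has Hilbert polynomial $p(z)$. I will argue by comparing Hilbert functions in degrees $\geq t$. Since $g$ preserves $\tilde I$, the ideal $g(J)$ is still saturated with Hilbert polynomial $p(z)$, and by $t$-regularity of $g(J)$ its truncation $g(J)_{\geq t}$ is generated by $g(J_t)$; modulo $\tilde I$ this ideal is generated by the image of $H$. Hence $(H\cup \PP{\tilde I_{\geq t}})$ and $g(J)$ coincide in every degree $s\geq t$. On the other hand, the graded decomposition~\eqref{eq:decomposition} underlying Definition~\ref{def:rel marked basis} gives $R_\mathbb{L}/(H\cup \PP{\tilde I_{\geq t}})\cong \langle \NN(\tilde J_{\geq t})\rangle_\mathbb{L}$ as graded $\mathbb L$-modules, whose Hilbert function equals that of $R/\tilde J$ in degrees $\geq t$. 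Combining both identifications forces the Hilbert polynomial of $R/\tilde J$ to be exactly $p(z)$.

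Finally, under the embedding of $\MFFunctor{\tilde I_{\geq t}, \tilde J_{\geq t}}$ into $\HFunctor{X}{p(z)}$ the ideal $(H\cup \PP{\tilde I_{\geq t}})$ corresponds to its saturation, namely $g(J)$; pulling this point back through the natural isomorphism induced by $\tilde g^{-1}$ places $J$ in $\MFFunctor{\tilde I_{\geq t}, \tilde J_{\geq t}}^{\tilde g}(\mathbb L)$, completing the covering. The delicate step is the Hilbert polynomial verification above, because Corollary~\ref{cor:tranformation} produces $\tilde J$ through a generic linear change of variables without directly tracking numerical invariants, so the match with $p(z)$ must be extracted from the direct-sum structure of a relative marked basis rather than from any generic dimensional count.
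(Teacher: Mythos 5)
Your proof is correct. You differ from the paper in architecture but not in the underlying machinery. The paper begins from the open cover of the ambient functor $\HFunctor{\mathbb P^n}{p(z)}$ furnished by \cite[Proposition 10.3]{Quot}, intersects it with the closed subfunctor $\HFunctor{X}{p(z)}$, and then restricts the index set first to $Q_{p(z),\tilde I}$ (via Theorem~\ref{thm:linkToGenMethod}) and then to $g\in\PGL{k+1}$ (via Corollary~\ref{cor:tranformation}), finishing with the identification $\MFFunctor{\tilde J_{\geq t}}^{g}\cap\HFunctor{X}{p(z)}=\MFFunctor{\tilde I_{\geq t},\tilde J_{\geq t}}^{\tilde g}$ from Theorem~\ref{th:open subfunctor}. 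You instead verify the two axioms of an open cover head-on: openness of each piece from Theorem~\ref{th:open subfunctor}\ref{it2:intersectionFunctors} (checking $t\geq\rho_{\tilde J}-1$ via the Gotzmann bound), and surjectivity on field-valued points directly from Corollary~\ref{cor:tranformation}. What your route buys is twofold: it bypasses the appeal to \cite[Proposition 10.3]{Quot} and the closed-subfunctor intersection step, and it makes explicit a point the paper leaves implicit, namely that the quasi-stable ideal $\tilde J$ produced by Corollary~\ref{cor:tranformation} actually lies in $Q_{p(z),\tilde I}$. Your Hilbert-function comparison (identifying $(H\cup\PP{\tilde I_{\geq t}})$ with $g(J)_{\geq t}$ in degrees $\geq t$, then reading the Hilbert function of $R/\tilde J$ off the direct-sum decomposition~\eqref{eq:decomposition}) is precisely the content that is needed and is nowhere spelled out in the paper's proof; it is the correct and natural way to close that gap. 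The paper's route, on the other hand, is more economical in exposition and makes the relation to the ambient Hilbert scheme structurally visible, which is thematically consistent with the way Proposition~\ref{prop:sottoschema} and Theorem~\ref{th:open subfunctor} are set up.
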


\begin{proof}
We can apply \cite[Proposition 10.3]{Quot} to the Hilbert functor $\HFunctor{\mathbb P^n}{p(z)}$, obtaining the following open cover
\begin{equation}\label{eq:ricFuncPn}
\HFunctor{\mathbb P^n}{p(z)} =\bigcup_{g\in \mathrm{PGL}_{\kk}(n+1)}\left(\bigcup_{\tilde J\in Q_{p(z)}}  \MFFunctor{ \tilde J_{\geq t}}^{g}\right).
\end{equation}
Since $\HFunctor{X}{p(z)}$ is a closed subfunctor of $\HFunctor{\mathbb P^n}{p(z)}$, we have an open cover of $\HFunctor{X}{p(z)}$ intersecting it with the open subfunctors of \eqref{eq:ricFuncPn}. In order to cover $\HFunctor{X}{p(z)}$  it is enough to consider $J\in Q_{p(z),\tilde I}$, thanks to Theorem \ref{thm:linkToGenMethod}.

We now observe that it is even enough to only take $g\in \PGL{k+1}$ thanks to
Corollary \ref{cor:tranformation}. We can now conclude taking into account that $\MFFunctor{ \tilde J_{\geq t}}^{g}\cap \HFunctor{X}{p(z)}=\MFFunctor{\tilde I_{\geq t}, \tilde J_{\geq t}}^{\tilde g}$ by Theorem \ref{th:open subfunctor} item \ref{it2:intersectionFunctors},  combined with \cite[Exercise VI-11]{EH}.
\end{proof}

\section{Lex-point in Hilbert schemes over Macaulay-Lex quotients on quasi-stable ideals}
\label{sec:lex-point}

\subsection{Generalities}
Following \cite{MP}, the same notation and terminology as in Section~\ref{sec:open cover} are here used also for quotient rings $\QR:=R/M$ when $M$ is any monomial ideal of~$R$.

\begin{definition}(see \cite{MM2010,MP})
A set $W$ of terms of $\QR$ is called a {\em lex-segment of $\QR$} if, for all terms $u,v\in \QR$ of the same degree, if $u$ belongs to $W$ and $v>_{lex}u$ then $v$ belongs to $W$. A monomial ideal $U$ of $\QR$ is called a {\em lex-ideal} if the set of terms in $U$ is a lex-segment of $\QR$.
\end{definition}

\begin{example}\label{ex:lex-ideal}
The image of a lex-ideal of $R$ in $\QR$ is a lex-ideal of $\QR$. However, there are lex-ideals of $\QR$ that are not the image of a lex-ideal of $R$. For example, consider $n=3$ and $\tilde I=(x_3^2,x_2^5)$. Then, the image $U$ in $\QR$ of the ideal $\tilde J=(x_3^2,x_3x_2,x_3x_1^2,x_2^5)\subseteq R$ is a lex-ideal in $\QR$, but $\tilde J$ is not a lex-ideal in $R$. 
\end{example}

The quotient ring $\QR$ is called a {\em Macaulay-Lex ring} if,
for any homogeneous ideal $U$ of $\QR$, there exists a lex-ideal of $\QR$ having the same Hilbert function as $U$ (e.g.~\cite{MM2010}). 
If the monomial ideal $M$ induces a Macaulay-Lex quotient ring, then we say that $M$ is \emph{Macaulay-Lex} and that $M$ is a \emph{Macaulay-Lex} ideal.

\begin{example}\label{example:MacaulayLexMonomialIdeals}
Various families of Macaulay-Lex monomial ideals $M\subseteq R$ are known. We list some of them explicitly and point to references in other cases.
\begin{enumerate}
  \item The most well-known class of Macaulay-Lex ideals are the \emph{Clements-Lindstr\"{o}m ideals} \cite{CL}. They are ideals 
generated by a regular sequence $x_n^{d_n}, x_{n-1}^{d_{n-1}},\dots,x_0^{d_0}$, where $1\leq d_n\leq d_{n-1}\dots\leq d_0$ are integers or $\infty$ with $x_i^{\infty}=0$. A Clements-Lindstr\"om ideal is a quasi-stable ideal $\tilde I$ and the quotient ring $\QR:=R/\tilde I$, which is called a {\em Clements-Lindstr\"{o}m ring}, is Cohen-Macaulay. 
If $d_n=1$, one may as well work in a quotient of $\kk[x_0,\ldots,x_{n-1}]$ and drop the generator~$x_n$. 
  \item Abedelfatah \cite[Theorem 4.5]{AbedelfatahMacaulayLexRings} discovered two families of Macaulay-Lex ideals, whose generating sets 
show some similarities to the generators of Clements-Lindstr\"{o}m ideals. In our conventions, they are given as follows, under the conditions
$2\leq e_n\leq e_{n-1}\leq\cdots\leq e_0\leq \infty$ and $t_i<e_i$ for all $i$:
\begin{itemize}
    \item $I=(x_n^{e_n},x_{n}^{t_n}x_{n-1}^{e_{n-1}},\ldots,x_n^{t_n}x_0^{e_0})$,
    \item $I=(x_n^{e_n},x_n^{e_n-1}x_{n-1}^{e_{n-1}},x_n^{e_n-1}x_{n-1}^{t_{n-1}}x_{n-2}^{e_{n-2}},\ldots, x_n^{e_n-1}x_{n-1}^{t_{n-1}}\cdots x_{1}^{t_{1}}x_0^{e_0})$.
\end{itemize}
One can show that every such ideal is quasi-stable.
  \item Mermin \cite{MerminMonomialRegularSequences} showed that a monomial regular sequence generates a Ma\-cau\-lay-Lex ideal if and only if it is of the form 
$$(x_n^{e_n},x_{n-1}^{e_{n-1}},\ldots,x_{r+1}^{e_{r+1}},x_r^{{e_r}-1}x_i),$$
where $e_n\leq e_{n-1}\leq \ldots\leq e_r$ and $i\leq r$. Note that such an ideal is 
quasi-stable if and only if $i=r$, i.e., if it is a Clements-Lindstr\"{o}m ideal.
  \item A complete characterization of all Macaulay-Lex monomial ideals in $\kk[x_0,x_1]$ is known (see e.g. \cite{HeMacaulayLexIdealsIn2Variables}). In particular, there are many quasi-stable Macaulay-Lex ideals in the polynomial ring with two variables.
\item Given $n$ zero-dimensional Macaulay-Lex monomial ideals $M_i$, each of them in a polynomial ring with two variables, 
one can construct \cite{HeMacaulayLexIdealsSomeGeneralizedMacaulayLexIdeals} a zero-dimen\-sional Macaulay-Lex ideal in $R$ from $M_1,\ldots,M_n$.
Being zero-dimen\-sio\-nal, this ideal is also quasi-stable. Note that the construction in \cite{HeMacaulayLexIdealsSomeGeneralizedMacaulayLexIdeals}
covers also more general cases.
  \item For each Macaulay-Lex monomial ideal $M_i\subseteq R_i=\kk[x_i,\ldots,x_n]$, where $i\in \{1,\ldots,n\}$, also the extension ideal
$(M_i)\cdot {R}\subseteq R$ is Macaulay-Lex.~\cite[Thm. 4.1]{MePe2006}
\end{enumerate}
\end{example}

\begin{remark}\label{remark:ML_property_under_ideal_operations}
The property of being Macaulay-Lex is not preserved under many common ideal operations like for example saturation.

Consider the monomial ideal $I=(x_3^2,  x_3x_2^7, x_3x_2x_1^7, x_3x_2x_1^2x_0^7)\subset R=\mathbb{K}[x_0,\ldots,x_3]$. It is Macaulay-Lex  according to Example~\ref{example:MacaulayLexMonomialIdeals} item (2). Its saturation is  $I^{sat}=(x_3^2, x_3x_2x_1^2, x_3x_2^7)$. We can apply \cite[Proposition 1]{HeMacaulayLexIdealsSomeGeneralizedMacaulayLexIdeals} to see that $I^{sat}$ is not Macaulay-Lex. Consider the set  $J_4=\{x_3x_1^3\}\subseteq R/I$. Its lex-segment in $R/I^{sat}$ is  $L_4=\{x_3x_2^3\}$. Multiplying $J_4$ with the generators $x_0,\ldots,x_3$ of the homogeneous maximal ideal, we obtain  the set $\{x_0,x_1,x_2,x_3\}\{x_3x_1^3\}=\{x_3x_1^3x_0, x_3x_1^4, x_3x_2x_1^3, x_3^2x_1^3\}$ which, modulo the ideal $I^{sat}$, is equal to $\{x_3x_1^3x_0, x_3x_1^4\}$, a set with two  elements; but multiplying $L_4$ with $x_0,\ldots,x_3$, we obtain the set $\{x_0,x_1,x_2,x_3\}\{x_3x_2^3\}=\{x_3x_2^3x_0, x_3x_2^3x_1, x_3x_2^4, x_3^2x_2^3\}$ which, modulo the ideal $I^{sat}$, is equal to $\{x_3x_2^4, x_3x_2^3x_1,  x_3x_2^3x_0\}$, with three elements. Thus, it is not possible to find a lex-ideal in the quotient $R/I^{sat}$ having the same Hilbert function as the ideal $(J_4)+I^{sat}/I^{sat}$.

The Macaulay-Lex property is in general also not preserved when truncating an ideal at a given degree.

Consider the ideal $I=(x_3^2, x_3x_2^3, x_3x_2x_1^3, x_3x_2x_1^2x_0^3)\subset \mathbb{K}[x_0,...,x_3]$. It is Macaulay-Lex according to Example~\ref{example:MacaulayLexMonomialIdeals} item (2). Consider the same ideal in $\mathbb{K}[w,x_0,...,x_3]$, where the variable $w$ is ranked lower than the others. The ideal is still Macaulay-Lex, see Example~\ref{example:MacaulayLexMonomialIdeals} item (6). Moreover, it is also saturated in this ring. As mentioned in Example~\ref{example:MacaulayLexMonomialIdeals} item (2), the ideal is quasi-stable. Its regularity is 8, as its minimal Pommaret basis has maximal degree 8 \cite[Corollary 5.5.18]{Seiler:InvolutionBook} (one element of degree $8$ being $x_3x_2^2x_1^2x_0^3$). The truncation $I_{\geq 8}$ is a stable ideal \cite[Proposition 5.5.19]{Seiler:InvolutionBook}. A minimal generator of $I_{\geq 8}$ is $a=x_3x_2x_1^2x_0^4$. Now consider the term $b=x_3x_2^2x_1x_0^3$. $b$ is also of degree $8$, is lexicographically larger than $a$, and $\min(a)=\min(b)$. But $b$ is not in the ideal $I_{\geq 8.}$ This shows that $I_{\geq 8}$ is not piecewise lexsegment \cite[Definition~3.1]{AbedelfatahMacaulayLexRings} although it is stable. It follows \cite[Theorem 3.6]{AbedelfatahMacaulayLexRings} that $I_{\geq 8}$ is not Macaulay-Lex.
\end{remark}

From now we assume that $M:=\tilde I$ is a quasi-stable ideal in~$R$ and that $\QR=R/\tilde I$ is a Macaulay-Lex ring. Recall that a monomial ideal $U$ of $\QR$ is quasi-stable in $\QR$ if the ideal $(\mathcal B_{U}\cup \mathcal B_{\tilde{I}})$ is quasi-stable in $R$.   

\begin{lemma}\label{lemma:lex-ideal}
With the notation above,
\begin{itemize}
\item[(i)] If $W$ is a lex-segment in $\QR$ then $\{x_0,\dots,x_n\}\cdot W$ is a lex-segment in $\QR$.
\item[(ii)] A lex-ideal $U$ in $\QR$ is quasi-stable. 
\item[(iii)] If $U$ is a lex-ideal of $\QR$, then $(\mathcal B_U\cup \mathcal B_{\tilde I})^{sat}/\tilde I$ is a lex-ideal.
\end{itemize}
\end{lemma}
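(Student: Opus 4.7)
\emph{Part (i).} I reduce the argument to the classical fact that if $W^R$ is a lex-segment of $R_d$, then $\{x_0,\ldots,x_n\}\cdot W^R$ is a lex-segment of $R_{d+1}$. For each degree $d$, let $t_0$ be the lex-minimum of $W_d$, and set $W^R := \{t \in R_d : t \geq_{lex} t_0\}$; identifying terms of $\QR$ with $\tilde I$-free terms of $R$, one has $W_d = W^R \cap \QR_d$. The key observation is
$\{x_0,\ldots,x_n\}\cdot W_d \cap \QR_{d+1} = \{x_0,\ldots,x_n\}\cdot W^R \cap \QR_{d+1}$: for the nontrivial inclusion, if $x_i w$ is $\tilde I$-free with $w \in W^R$, then $w$ itself is $\tilde I$-free (otherwise $x_i w \in \tilde I$), so $w \in W_d$. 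The right-hand side is the restriction of a lex-segment of $R_{d+1}$ to $\QR_{d+1}$, and such a restriction is immediately a lex-segment of $\QR_{d+1}$.

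\emph{Part (ii).} Set $J := (\mathcal B_U \cup \mathcal B_{\tilde I}) \subseteq R$. Using the characterization of quasi-stable ideals via truncations \cite{Seiler:InvolutionBook}, it suffices to exhibit $d$ with $J_{\geq d}$ stable; I take $d := \mathrm{reg}(\tilde I)$. Fix a monomial $t \in J_{\geq d}$ and $x_j > \min(t)$, and let $t' := x_j t / x_{\min(t)}$. When $t \in \tilde I$, the condition $\deg t \geq \mathrm{reg}(\tilde I)$ ensures $\tilde I_{\geq d}$ is stable, hence $t' \in \tilde I \subseteq J$. When $t$ is $\tilde I$-free, it represents an element of $U$; since $t' >_{lex} t$ has the same degree, either $t' \in \tilde I \subseteq J$, or $t'$ is $\tilde I$-free, in which case the lex-segment property of $U$ forces $[t'] \in U$ and hence $t' \in J$.

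\emph{Part (iii).} With $J$ as in part (ii), let $V := J^{sat}/\tilde I \subseteq \QR$, which is a monomial ideal of $\QR$; I show $V$ is a lex-segment in each degree. Given $t \in V$ and $t' \in \QR$ of the same degree with $t' >_{lex} t$, choose $k \geq 0$ with $x_0^k t \in J$. Since $\tilde I$ is saturated ($x_0 \nmid m$ for every $m \in \mathcal B_{\tilde I}$), multiplication by $x_0$ preserves $\tilde I$-freeness, so $x_0^k t$ and $x_0^k t'$ are both $\tilde I$-free, of equal degree, with $x_0^k t' >_{lex} x_0^k t$. Since $x_0^k t$ represents an element of the lex-ideal $U$, the lex-segment property yields $x_0^k t' \in U$, hence $x_0^k t' \in J$, $t' \in J^{sat}$, and $t' \in V$. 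The principal difficulty is pinning down the correct characterization of quasi-stability in part (ii); parts (i) and (iii) reduce to direct manipulations with lex-segments and the saturation hypothesis on $\tilde I$.
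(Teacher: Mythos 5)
Your proof is correct, and in each part you take a route that differs from the paper's. For part (i), the paper simply cites \cite[Proposition 2.5]{MePe2006}, whereas you give a self-contained reduction to the classical polynomial-ring statement by noting that every degree-$d$ lex-segment of $\QR$ is the restriction to $\tilde I$-free terms of a lex-segment $W^R\subseteq R_d$, and that $\{x_0,\dots,x_n\}\cdot W_d$ and $\{x_0,\dots,x_n\}\cdot W^R$ have the same $\tilde I$-free part — a clean observation that makes the argument transparent. For part (ii), the paper verifies the quasi-stability of $J=(\mathcal B_U\cup\mathcal B_{\tilde I})$ by a direct term-by-term argument (given $\tau\in U$ with minimal variable $x_i$ and $x_j>x_i$, the term $x_j\tau/x_i$ is lex-larger, hence in $U$ or in $\tilde I$), invoking the quasi-stability of $\tilde I$ to finish; you instead show that the truncation $J_{\geq\mathrm{reg}(\tilde I)}$ is \emph{stable} and then appeal to the characterization of quasi-stable ideals via stable truncations. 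Your route is arguably cleaner in that it avoids manipulating the less familiar quasi-stability condition directly, at the cost of invoking the truncation characterization and the stability of $\tilde I_{\geq\mathrm{reg}(\tilde I)}$ from \cite{Seiler:InvolutionBook}. For part (iii), the paper relies on the fact that the saturation of a quasi-stable ideal is obtained by substituting $x_0\mapsto 1$ in its generators; you instead work with $J^{sat}=J:x_0^\infty$ (valid once (ii) gives quasi-stability of $J$) and use the saturation hypothesis on $\tilde I$ to see that multiplication by $x_0^k$ preserves $\tilde I$-freeness — the same underlying structure, approached via colon ideals rather than evaluation. Both routes are valid; yours is more verbose but more explicit about which hypotheses are doing the work.
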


\begin{proof}
For item (i) see \cite[Proposition 2.5]{MePe2006}.
For item (ii), let $\tau$ be a term of $U$ with minimal variable $x_i$ and let $x_j>x_i$. Since $x_j\frac{\tau}{x_i}>_{lex} \tau$, we must have that $x_j\frac{\tau}{x_i}$ belongs to $U$ unless it belongs to $\tilde I$. Then, we conclude because $\tilde I$ is quasi-stable. Item (iii) now follows from item (ii) and from the properties of the lexicographic term order, because  thanks to the properties of quasi-stable ideals we obtain the saturation replacing $x_0$ by~$1$ in every generator.
\end{proof}

If $\tilde I$ is a saturated ideal   we can consider  the projective scheme $X=\mathrm{Proj}(\QR)$ and   the Hilbert scheme $\HScheme{X}{p(z)}$ on the Macaulay-Lex ring $\QR$, for an admissible Hilbert polynomial $p(z)$. 

\begin{theorem}\label{th:lex-point}
Let $\QR=R/\tilde I$ be a Macaulay-Lex ring  and $X=\mathrm{Proj}(\QR)$. Then
$\HScheme{X}{p(z)}$ is non-empty if and only if it contains a (unique) point $Y$ defined by a lex-ideal of $\QR$. 
Moreover, $Y$ has the minimal possible Hilbert function in $\HScheme{X}{p(z)}$.
\end{theorem}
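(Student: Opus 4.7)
The backward direction is immediate: any lex-point witnesses that the Hilbert scheme is non-empty. For the forward direction, my plan is to combine the Macaulay-Lex property with Lemma~\ref{lemma:lex-ideal}(iii) to produce a lex-point, then handle uniqueness and minimality in turn. To begin, pick any $\kk$-point of $\HScheme{X}{p(z)}$; it is defined by a saturated ideal $K\subseteq \QR$ with $\QR/K$ having Hilbert polynomial $p(z)$. The Macaulay-Lex hypothesis furnishes a lex-ideal $L_K\subseteq \QR$ with $H_{L_K}=H_K$. This $L_K$ need not be saturated, but Lemma~\ref{lemma:lex-ideal}(iii) ensures that its saturation $L:=L_K^{sat}$ is still a lex-ideal, and since saturation preserves the Hilbert polynomial, $L$ defines a lex-point $Y$ of $\HScheme{X}{p(z)}$.

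For uniqueness, suppose $L_1, L_2$ are saturated lex-ideals of $\QR$ with Hilbert polynomial $p(z)$. In each degree $d$, both $(L_i)_d$ are lex-segments of $\QR_d$, and two lex-segments of equal dimension in a common space coincide (while lex-segments of unequal dimension are automatically nested). Matching Hilbert polynomials forces $\dim(L_1)_d=\dim(L_2)_d$ for every sufficiently large $d$, so $(L_1)_d=(L_2)_d$ there. Suppose for contradiction that equality failed in some lower degree $d_0$, and pick (without loss of generality) $f\in (L_2)_{d_0}\setminus (L_1)_{d_0}$. Multiplication by a large power of the irrelevant ideal $\mathfrak m$ sends $f\cdot \mathfrak m^N$ into the high-degree range where $L_1$ and $L_2$ coincide, so $f\cdot \mathfrak m^N\subseteq L_1$, and the saturatedness of $L_1$ then forces $f\in L_1$, a contradiction.

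Minimality is a corollary of the construction: for an arbitrary saturated $K$ with Hilbert polynomial $p(z)$, the lex-ideal $L_K$ produced above satisfies $L_K\subseteq L_K^{sat}$, and uniqueness identifies $L_K^{sat}$ with the lex-point $L$. Therefore $H_K(d)=H_{L_K}(d)\leq H_L(d)$ for every $d$, which after passing to quotients reads $H_{\QR/L}(d)\leq H_{\QR/K}(d)$, establishing that $Y$ has the minimal Hilbert function on $\HScheme{X}{p(z)}$.

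The delicate step is the uniqueness argument: matching Hilbert polynomials alone only yields degree-wise equality of $L_1$ and $L_2$ in sufficiently high degrees, and the extension down to all degrees hinges on combining the nesting of lex-segments with the saturation hypothesis in $\QR$ rather than being purely formal. The other steps are essentially bookkeeping once Lemma~\ref{lemma:lex-ideal}(iii) and the defining property of Macaulay-Lex rings are in hand.
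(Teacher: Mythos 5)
Your proof is correct and follows essentially the same route as the paper: invoke the Macaulay--Lex property to obtain a lex-ideal with matching Hilbert function, pass to its saturation via Lemma~\ref{lemma:lex-ideal}(iii), and deduce minimality from the inclusion $L_K\subseteq L_K^{\mathrm{sat}}$. The paper phrases the construction through the canonical lex-segment $W$ of degree $r$ (with $r$ tied to the Gotzmann number) and then saturates, which makes the uniqueness claim nearly automatic; you instead saturate $L_K$ directly and supply the degree-by-degree uniqueness argument that the paper leaves terse. Both are sound, and your spelled-out uniqueness step (nested lex-segments in each degree, agreement in high degrees, then descent via saturatedness) is a welcome elaboration of what the paper compresses into ``by definition of saturation.''
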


\begin{proof}
Let $r$ be the maximum between the Gotzmann number of $p(z)$ and the regularity of~$\tilde I$. 
If $\HScheme{X}{p(z)}$ is non-empty, then there exists at least a lex-ideal $U$ of $\QR$ such that $\QR/U$ has Hilbert polynomial $p(z)$, because $\QR$ is Macaulay-Lex. 
In particular, letting $\tilde p(z)$ be the Hilbert polynomial of $\QR=R/\tilde I$, the set $W$ made of the $\tilde p(r)-p(r)$  lex-largest terms of $U$ of degree $r$ is a lex-segment. Thanks to Lemma \ref{lemma:lex-ideal} item (iii), the ideal $(W\cup \mathcal B_{\tilde I})^{sat}/\tilde I$ is a lex-ideal too and by construction defines the desired point $Y$ in $\HScheme{X}{p(z)}$. 
Indeed, by definition of saturation we obtain the same point $Y$ starting from any other lex-ideal of $\QR$ with Hilbert polynomial $p(z)$, so that the last assertion also follows.
\end{proof}

\begin{definition}\label{def:lex-point}
Let $\QR$ be a Macaulay-Lex ring over a saturated quasi-stable ideal $\tilde I$ and $X=\mathrm{Proj}(\QR)$. If $\HScheme{X}{p(z)}$ is non-empty, then its unique point defined by a lex-ideal of $\QR$ is called the {\em lex-point} of $\HScheme{X}{p(z)}$.
\end{definition}


\subsection{Examples of smooth and singular lex-points}

We now give some examples both of smooth and singular lex-points in Hilbert schemes over Macaulay-Lex rings. 

When it is necessary, we apply Algorithm~\ref{algorithm} and, like in Section \ref{sec:relative marked functor}, take into account the fact that a scheme $\MFScheme{\tilde I_{\geq t}, \tilde J_{\geq t}}$ is an open subscheme of $\HScheme{X}{p(z)}$. Hence, the Zariski tangent space to $\MFScheme{\tilde I_{\geq t}, \tilde J_{\geq t}}$ at one of its points is equal to the Zariski tangent space to $\HScheme{X}{p(z)}$ at the same point (see also \cite[Corollary 1.9]{Gore}). 

\begin{example}\label{ex:trivial}
Here is a trivial case of smooth lex-points.
When $\tilde I=(x_3,\dots,x_n)$, $X$ is the projective plane and so every point of the Hilbert scheme $\HScheme{X}{p(z)}$ is smooth, for a constant polynomial $p(z)$. Hence, in this case every point in $\HScheme{\mathbb P^n_{\kk}}{p(z)}$, even singular, corresponds to a smooth point in $\HScheme{X}{p(z)}$.
This is the extremal case of the trivial situation in which $\tilde I$ is generated by variables. In fact, in such case the image in $\QR$ of a lex-point of $R$ is still smooth because it is simply the lex-point in a Hilbert scheme over a lower dimensional projective space. 
\end{example}

\begin{example}\label{ex:smooth classes NoCL}
The following saturated quasi-stable ideals $\tilde I\subset\tilde J$ in the ring $R=\mathbb \kk[x_0,\dots,x_n]$ give {\em smooth} lex-points $Y$ in the Hilbert scheme $\HScheme{X}{p(z)}$, where $X=\mathrm{Proj}(R/\tilde I)$, $p(z)$ is the Hilbert polynomial of $R/\tilde J$ and $Y$ is defined by $\tilde J/\tilde I$:
\begin{itemize}
\item[(i)] $\tilde I=(x_n^k,x_n^{k-1}x_{n-1}) \subset \tilde J=(x_n^k,x_n^{k-1} x_{n-1},x_n^{k-1} x_{n-2})$, for every $n\geq 3$ and $k\geq 2$
\item[(ii)] $\tilde I=(x_n,x_{n-1})^2\subset \tilde J=(x_n^2,x_nx_{n-1},x_nx_{n-2},x_{n-1}^2)$, for every $n\geq 3$.
\end{itemize}
In case (i), the ideal $\tilde I$ is Macaulay-Lex thanks to \cite[Theorem~4]{HeMacaulayLexIdealsIn2Variables} and \cite[Theorem~4.1]{MePe2006} and the ideal $\tilde J$ already defines a lex-point in $R$. By a pencil-and-paper work, we apply Algorithm~\ref{algorithm} and obtain the following results. 
The set $\mathscr H$ is made of the polynomial $h= x_n^{k-1} x_{n-2} + c_1 x_0^k+ \dots c_{s}x_n^{k-1}x_{n-3}$ in the ring $\mathbb Q[c_1,\dots,c_s][x_0,\dots,x_n]$, with $s=\binom{n+k}{n}-3$.
By $\rid{\mathscr H^\ast }$ we need to reduce the three polynomials $x_n h$, $x_{n-1} h$, $x_n\cdot x_n^{k-1}x_{n-1}$ and then apply the reduction modulo $\tilde I$. The last polynomial $x_n\cdot x_n^{k-1}x_{n-1}$ is already reduced with respect to  $\rid{\mathscr H^\ast }$ and moreover belongs to $\tilde I$.

The terms in the polynomial $x_{n-1}h$ that do not belong to $\tilde I$ are already reduced modulo $\rid{\mathscr H^\ast }$. Hence, their coefficient must be null. On the other hand, the coefficients of the terms belonging to $\tilde I$ are free. Such terms are of type $x_{n-1}\cdot x_n^{k-1}x_j$, where $0 \leq j \leq n-3$, hence they are $n-2$.

The terms in the polynomial $x_nh$ that belong to $\tilde I$ are 
of type $x_n\cdot x_n^{k-1}x_j$, where $0 \leq j \leq n-3$, hence they have the same coefficients of the analogous terms in  $x_{n-1}h$. All the other terms must have a null coefficient because of the polynomial $x_{n-1}h$, even those that are not reduced with respect to $\rid{\mathscr H^\ast }$.

In conclusion, the ideal $\mathscr R'$ is generated by $s-(n-2)$ parameters $c_i$, and hence $\MFScheme{\tilde I_{\geq t}, \tilde J_{\geq t}}$ is a linear variety of dimension $n-2$. Hence, it is smooth and, in particular, the point $Y$ is smooth in $\HScheme{X}{p(z)}$. 

In case (ii), the ideal $\tilde I$ is Macaulay-Lex thanks to \cite[Theorem~2.1]{MerminMuraiBettinNumbersOfLexIdealsOverMLRgs} and the ideal $\tilde J$ does not define a lex-point in $R$, but $\tilde J/\tilde I$ defines a lex point in $\QR=R/\tilde I$. As for case (i), we apply Algorithm~\ref{algorithm}. We proceed in an analogous way as for case (i), obtaining that $\MFScheme{\tilde I_{\geq t}, \tilde J_{\geq t}}$ is a linear variety of dimension $2n-3$.
\end{example}

\begin{example}\label{ex:singular no CL}
The saturated quasi-stable ideals $\tilde I=(x_3^3,x_3^2x_2)\subset\tilde J=(x_3^2,x_3x_2,$ $x_3x_1)\subset R=\mathbb \kk[x_0,\dots,x_3]$ give a {\em singular} lex-point $Y$ in the Hilbert scheme $\HScheme{X}{p(z)}$, where $X=\mathrm{Proj}(R/\tilde I)$, $p(z)$ is the Hilbert polynomial of $R/\tilde J$ and $Y$ is defined by $\tilde J/\tilde I$. Like in Example \ref{ex:smooth classes NoCL}, the ideal $\tilde I$ is Macaulay-Lex thanks to \cite[Theorem~4]{HeMacaulayLexIdealsIn2Variables} and \cite[Theorem~4.1]{MePe2006}.
For $t\geq 1$, the dimension of $\MFScheme{\tilde I_{\geq t}, \tilde J_{\geq t}}$ is $2$ and the dimension of its Zariski tangent space at $Y$ is $6$. In this case we have $\mathcal P_{\tilde J}\setminus \mathcal P_{\tilde I}=\mathcal P_{\tilde J}$ and so we apply the computation described at the end of Section \ref{sec:MFandHilb}.
\end{example}

\begin{remark}\label{rem:smooth no-lex CL}
The points $Y$ of Examples \ref{ex:first example} and~\ref{ex:second example} are singular lex-points in Hilbert schemes over Clements-Lindstr\"om rings (see also Example \ref{ex:lex-ideal}). Actually, it seems not obvious to find a non-trivial example of smooth lex-point in a Clements-Lindstr\"om ring. However, there are other points that are smooth. For example, letting $\tilde I=(x_3^2)\subset\tilde J=(x_3^2,x_{2}^2)\subset R=\mathbb \kk[x_0,\dots,x_3]$, both the dimensions of $\MFScheme{\tilde I_{\geq 1}, \tilde J_{\geq 1}}$ and of the Zariski tangent space at the point defined by $\tilde J/\tilde I$, which is not a lex-point, are $8$ in the Hilbert scheme $\HScheme{X}{p(z)}$ with $X=\mathrm{Proj}(R/\tilde I)$ and $p(z)=4z$.
\end{remark}

\section*{Acknowledgments}
The first and second authors are members of GNSAGA (INdAM, Italy).

\bibliographystyle{amsplain}
\bibliography{RelativeMarkedBases}

\end{document}